\DeclareMathOperator*{\argmin}{arg\,min}
\newtheorem{theorem}{Theorem}
\newtheorem{corollary}[theorem]{Corollary}
\newtheorem{lemma}[theorem]{Lemma}
\newtheorem{definition}[theorem]{Definition}
\newtheorem{remark}[theorem]{Remark}
\numberwithin{equation}{section}
\numberwithin{theorem}{section}
\renewcommand{\AA}{\mathsf{A}^{(i)}}
\newcommand{\WW}{\mathsf{W}}
\newcommand{\ww}{\mathsf{w}^{(i)}}
\newcommand{\bb}{\mathsf{b}^{(i)}}
\newcommand{\R}{{\mathbb R}}
\newcommand{\Gspace}{\mathcal{G}}
\renewcommand{\leq}{\leqslant}
\renewcommand{\geq}{\geqslant}
\newcommand{\eps}{\varepsilon}
\renewcommand{\aa}{\mathsf{a}^{(i)}}
\newcommand{\dd}{\mathsf{d}}
\newcommand{\ee}{\mathsf{e}}
\newcommand{\xx}{\mathsf{x}}
\renewcommand{\tt}{\tilde{t}}
\renewcommand{\ss}{\mathsf{s}}
\newcommand{\sgn}{\text{sgn}\,}
\newcommand{\nullspace}[1]{{\mathcal{N}(#1)}}
\title{Estimating neural connection strengths from firing intervals}
\author{Maren B. Kristoffersen$^*$, Bjørn F. Nielsen$^*$, and Susanne Solem\thanks{Department of Mathematics, Norwegian University of Life Sciences, NO-1433 Ås, Norway (corresponding e-mail: maren.brathen.kristoffersen@nmbu.no).}}
\date{}
\begin{document}

\maketitle

\begin{abstract}
    We propose and analyse a procedure for using a standard activity-based neuron network model and firing data to compute the effective connection strengths between neurons in a network. We assume a Heaviside response function, that the external inputs are given and that the initial state of the neural activity is known. The associated forward operator for this problem, which maps given connection strengths to the time intervals of firing, is highly nonlinear. Nevertheless, it turns out that the inverse problem of determining the connection strengths can be solved in a rather transparent manner, only employing standard mathematical tools. In fact, it is sufficient to solve a system of decoupled ODEs, which yields a linear system of algebraic equations for determining the connection strengths. The nature of the inverse problem is investigated by studying some mathematical properties of the aforementioned linear system and by a series of numerical experiments. Finally, under an assumption preventing the effective contribution of the network to each neuron from staying at zero, we prove that the involved forward operator is continuous. Sufficient criteria on the external input ensuring that the needed assumption holds are also provided.  
\end{abstract}

\noindent {\bf Keywords:} activity based neuron network model, inverse problem, kernel reconstruction. \\
\noindent {\bf Mathematics Subject Classification:} 92B20, 34A34, 34A36, 34A55, 34D20, 65L09.

\section{Introduction}
Motivated by modern calcium imaging techniques which can supply accurate firing data for a large number of neurons, see for example \cite{Zongetal2022}, this manuscript concerns a method for the inverse problem of estimating the effective connection strengths between neurons in a network described by a standard activity-based neuron network model of $n$ neurons. The model, which builds on the classical works by Wilson and Cowan \cite{WilsonCowan1972, WilsonCowan1973} and Amari \cite{Amari1977}, reads as follows: for $i=1, \, 2, \, \ldots, \, n$, 
\begin{equation} \label{eq:motherODE}
\begin{split}
\tau_{i} \frac{d s_i}{dt}(t) + s_i(t) &= \Phi\left( \sum_j W_{ij} s_j(t-\tau_d) + B_i (t) \right), \quad \, 0 < t \leq T, \\
s_i(t) &= s_i^0 e^{-t/\tau_i}, \quad -\tau_d < t \leq 0, 
\end{split}
\end{equation}
where $s_i$ represents the synaptic drive of neuron $i$ with initial state $s_i^0 \geq 0$ and time constant $\tau_i$. The expression on the right-hand side represents the firing rate of the neuron \cite{Bressloff2012, Ermentrout2010}. The modulation function $\Phi$ adjusts the input received by neuron $i$, $W_{ij} \in \mathbb{R}$ represents the effective connection strength from neuron $j$ to neuron $i$, $B_i \in L^\infty([0,T])$ is the external input to the $i$'th neuron, $\tau_d > 0$ incorporates a time delay, and $[0,T]$ is the time interval of interest.

We will assume that the modulation function $\Phi$ is the Heaviside function. Utilising \eqref{eq:motherODE} with Heaviside modulation is done for mainly two reasons. First, although a coarse simplification of a real neuronal network, it is classical and variants of it appears in a plethora of works in neuroscience and related fields where the aim is to better understand various aspects of our brain. It is therefore highly relevant to investigate whether obtaining quantitative information from this model is possible. Second, surprisingly given the nonlinearity of \eqref{eq:motherODE}, the inverse problem of estimating the connection strengths (or ``connectivities'') from firing data turns out to be quite manageable. As an additional motivation, successfully obtaining information about the connection strengths between neurons based on the present framework can provide valuable insight into their wiring when performing various tasks.

Our firing data will be the \emph{firing intervals} of each neuron. In the context of \eqref{eq:motherODE}, we define the firing intervals to be the time intervals for which the right-hand side, the firing rate, is equal to 1. Note that, as $\Phi$ is assumed to be the Heaviside function, the right-hand side is either 0 or 1 at any time. Given the firing intervals, the procedure for the inverse problem then only consists of solving a system of decoupled ODEs and subsequently a linear system of algebraic equations. 

Calcium imaging techniques are able to record over 1000 neurons simultaneously \cite{Zongetal2022}. What is observed in these recordings is the intracellular Ca$^{2+}$ concentration, which can be used as a measure of the activity of a neuron. There is a connection between the Ca$^{2+}$ concentration of a neuron and its firing rate \cite{deneux2016accurate}, and it can therefore be argued that the output of calcium imaging recordings can be used to determine when the Heaviside firing rate in \eqref{eq:motherODE} should be 0 or 1. Our work is, however, a first investigation in this direction, and the the aim here is to analyse in what sense it is possible to solve the present inverse problem. Therefore we will only focus on recovering the connection strengths, and assume throughout that $T$, $\{ s_i^0 \}_{i=1}^n$, and $\tau_d$ are given quantities, that $\{ B_i \}_{i=1}^n$ are given functions, and that the time constants are identical and equal to one: $\tau_i = 1$ for all $i$. All these parameters are not available in real calcium imaging experiments, and thus future investigations are needed in order to fully explore the applicability of this model.

Closest to the present work are the investigations by Potthast and beim Graben \cite{PotthastGraben2009-1, PotthastGraben2009-2} without delay, and \cite{alswaihli2018kernel} which builds on the first two, incorporating delay. These works also focused on the inverse problem of estimating the connectivities of a network of neurons. However, they differ from the present in several aspects: i) the voltage version of \eqref{eq:motherODE} is utilised, meaning that the modulation function is inside the sum, ii) a smooth modulation function is assumed, and iii) the procedure relies on knowledge about the solution and its time derivative at every point in time.

We are mainly concerned with the inverse operation of computing information about the connectivities from the knowledge of the time intervals of firing. However, we have dedicated a section to investigating the continuity properties of the forward operator mapping connectivities to firing intervals. That the forward operator is continuous is crucial in order to reliably compute the time intervals of firing with finite precision arithmetic as round-off errors could potentially have a devastating effect on the outcome of a simulation \cite{Nielsen2016}. We show that the forward operator is (locally) continuous under a threshold condition assuming that the effective input to each neuron, i.e., the argument of $\Phi$ in \eqref{eq:motherODE}, does not stay at zero for any period of time. The assumption is related to previously established conditions needed for the existence and uniqueness of solutions to the discrete or continuous in space voltage version of \eqref{eq:motherODE} with a Heaviside modulation function and zero time delay, $\tau_d=0$, see \cite{potthast2010existence, Kruger2018, Nielsen2016, Nielsen2017}. Note that when $\tau_d>0$, the question of existence and uniqueness is not an issue for \eqref{eq:motherODE} with a Heaviside modulation function under reasonable assumptions on the external inputs. Thus, we require $\tau_d$ to be strictly positive.

One drawback with the threshold conditions mentioned above is that they depend on the unknown solution. In \cite{Kruger2018} sufficient conditions on the initial data are given in order to ensure that their threshold condition holds (called the \emph{absolute continuity condition}) in the case of the spatially continuous voltage version of \eqref{eq:motherODE}. In particular, their condition holds if the spatial gradient of the initial data is non-zero at the threshold value of the Heaviside function and that the initial data crosses the threshold a finite number of times. This is a restriction on the initial data that we cannot guarantee if the present investigations are to be relevant when considering data from real experiments. We instead identify sufficient conditions on the external inputs $\{B_i\}$, which one to some extent can control, ensuring that our threshold assumption holds.

The manuscript is organised as follows. Section \ref{sec:prelim} contains useful observations needed throughout the manuscript, while in Section \ref{sec:method} we investigate the nature of the inverse problem of determining the connectivities from the firing intervals and present a method. Section \ref{subsec:continuity_forward} is dedicated to establishing (local) continuity of the forward operator, Section \ref{sec:num} is devoted to numerical experiments, and we end the manuscript with some concluding remarks concerning the established results in Section \ref{sec:concluding}.

\section{Preliminaries}\label{sec:prelim}
For the model \eqref{eq:motherODE}, we define firing as follows. 
\begin{definition}[Firing]\label{def:firing}
The $i$'th neuron is said to fire when the right-hand side of the ODE in \eqref{eq:motherODE} equals one, i.e., when $\Phi$'s argument is larger than or equal to zero\footnote{For the sake of convenience, we define $$\Phi(x) = \left\{ \begin{array}{cc}
    0 & x<0,  \\
    1 & x \geq 0,
\end{array} \right.$$ which differs slightly from the standard convention that the Heaviside function equals $0.5$ for $x=0$.}. Thus, a neuron in this context is said to fire when its firing rate is equal to one.
\end{definition}
\noindent Throughout this manuscript, $\Gamma$ is the set containing the indexes corresponding to neurons that fire,  
\begin{equation} \label{set:neurons_that_fire}
    \Gamma = \{i \; | \; \textnormal{neuron } i \textnormal{ fires during the time interval } (0, T]  \},    
\end{equation}
and the $k$'th time interval of firing for the $i$'th neuron is denoted  
\begin{equation} \label{eq:firing_interval}
I_i^k= [t_i^k, \, t_i^k + \Delta t_i^k], \quad k=1,\, 2, \, \ldots, \, K(i).    
\end{equation}
Here, $K(i)$ is the number of times the $i$'th neuron fires during the time interval $(0,T]$, and $0 < \Delta t_i^k$ is the length of the $k$'th time interval of firing. Note that according to Definition \ref{def:firing},
\begin{equation*}
   \Phi\left( \sum_j W_{ij} s_j(t-\tau_d) + B_i (t) \right) = 1, \quad  t \in I_i^k.   
\end{equation*}
Furthermore, if $\Delta t_i^k = 0$, the right-hand side of the ODE in \eqref{eq:motherODE} only differs from zero at the time instance $t_i^k$, which is of measure zero and thus will not have any impact on the solution of \eqref{eq:motherODE}. 

We assume that $I_i^1, \, I_i^2, \, \ldots, I_i^{K(i)}$ are disjoint intervals and that they are ordered chronically: 
\begin{equation} \label{ineq:chronically}
   0 < t_i^1 < t_i^2 < \ldots < t_i^{K(i)}.  
\end{equation}
The time intervals of firing for different neurons are, of course, allowed to partially or fully overlap. 

Let $\WW= [W_{ij}]$ denote the matrix of connectivities. The forward operator, $F$, in the present context maps given connectivities to the time intervals of firing, 
\begin{equation} \label{def:forwardOperator}
\begin{aligned}
    F: \R^{n \times n} \quad &\to \quad \mathcal{I} , \\
    \WW \quad &\mapsto \quad  \{ I_i^k \},
    \end{aligned}
\end{equation}
which is a highly nonlinear operator due to the presence of the Heaviside response function $\Phi$ in the right-hand side of our network model \eqref{eq:motherODE}.

Let $\mathcal{M}$ be the collection of all finite unions of disjoint closed intervals $I \subset [0,T]$. Define  
\begin{equation*}
    \mathcal{I} := \mathcal{M}\times \mathcal{M} \times \cdots \times \mathcal{M} = \mathcal{M}^n,
\end{equation*}
where $n$ is the number of neurons in \eqref{eq:motherODE}.
Then, under the above assumptions, any collection of firing intervals $\{I_i^k\}:=\big\{\{I_i^k\}_{k=1}^{K(i)}\big\}_{i=1}^n$ can be uniquely identified with an element in $\mathcal{I}$. Together with the symmetric difference of sets defined below, we use this observation to define a metric allowing us to measure the difference between two different collections of firing intervals. The resulting metric will be utilised when proving the continuity of the forward operator \eqref{def:forwardOperator} in Section \ref{subsec:continuity_forward}.

\begin{definition}[Symmetric difference \cite{halmos1960naive}]
    The symmetric difference between any pair of measurable sets $A,\tilde A \subset [0,T]$ is defined by 
    \begin{equation}\label{def:general_sym_diff}
        d_\triangle(A,\tilde A) = |A\triangle \tilde A| = |(A\setminus\tilde A) \cup (\tilde A \setminus A)|.  
    \end{equation}
    If two sets $A,\tilde A$ are defined to be equivalent when they only differ on a set of measure zero, \eqref{def:general_sym_diff} defines a metric on the collection of all measurable subsets of $[0,T]$, called the symmetric difference metric. 
\end{definition}
\noindent A metric between two elements $\{I_i^k\}$, $\{\tilde{I}_i^k\} \in \mathcal{I}$, can then be defined as
\begin{equation}\label{def:symdiff_metric}
    d(\{I_i^k\},\{\tilde{I}_i^k\}) = \sup_i d_{\triangle}\big(\bigcup_k I_i^k ,\bigcup_k\tilde{I}_i^k\big).
\end{equation}
It is straightforward to check that \eqref{def:symdiff_metric} indeed defines a metric on $\mathcal{I}$. Note that two collections of intervals are deemed identical with this metric even if they differ on a set of measure zero. As mentioned above, this is okay since intervals of measure zero in this context will not impact the solution of 
\eqref{eq:motherODE_fire}.

\section{Derivation of the method}\label{sec:method}
We start this section by making some useful observations before presenting the algorithm in Section \ref{subsec:algorithm}. 

When the $i$'th neuron fires, the right-hand side of the ODE in \eqref{eq:motherODE} equals one.  
Hence, if the firing intervals $\{ I_i^k \}$ are known, we can write \eqref{eq:motherODE} in the form 
\begin{equation} \label{eq:motherODE_fire}
\begin{split}
\frac{d s_i}{dt}(t) + s_i(t) &= \sum_k \mathcal{X}_{I_i^k}(t), \quad \, 0 < t \leq T, \\
s_i(t) &= s_i^0 e^{-t}, \quad -\tau_d < t \leq0,
\end{split}
\end{equation}
for $i=1, \, 2, \, \ldots, \, n$, where $\mathcal{X}_{I_i^k}$ denotes the characteristic function of $I_i^k \subset \mathbb{R}$. We observe that \eqref{eq:motherODE_fire} is a decoupled system of $n$ inhomogeneous linear ODEs which can be solved explicitly. Note that this also implies that given the firing intervals, the $s_i$'s are independent of the connection strengths $\WW$. These considerations lead to the following lemma. 

\begin{lemma} \label{th:mother_theorem}
Assume that $\{ I_i^k \}$ is in the range of the operator $F$. Then, any solution $\WW=[W_{ij}]$ of
\begin{equation*}
    F \left( \WW \right) = \{ I_i^k \}
\end{equation*}
must satisfy, for any $i \in \Gamma$ and for any $t \in \left\{t_i^k, \, t_i^k+\Delta t_i^k\right\}_{k=1}^{K(i)}$, the linear equation 
\begin{align} \label{eq:mother_linear_system}
    \sum_j & s_j(t-\tau_d) W_{ij}   = - B_i (t),
\end{align}
whenever $B_i$ is continuous at $t$, and the inequalities 
\begin{equation} \label{eq:mother_inequalities}
\begin{split}
    \sum_j s_j(t-\tau_d) W_{ij}  &\geq - B_i (t), \quad t \in (t_i^k, \, t_i^k+\Delta t_i^k), \, k=1, \, 2, \, \ldots, \, K(i),\\
    \sum_j s_j(t-\tau_d) W_{ij}  &< - B_i (t), \quad t \notin \bigcup_{k=1}^{K(i)} I_i^k, 
\end{split}
\end{equation}
where $s_j$ solves \eqref{eq:motherODE_fire} for $j=1, \, 2, \, \ldots, \, n$. 
\end{lemma}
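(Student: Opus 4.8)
The plan is to convert the implicit, Heaviside-driven dynamics \eqref{eq:motherODE} into the explicit linear dynamics \eqref{eq:motherODE_fire}, and then to read off \eqref{eq:mother_linear_system}--\eqref{eq:mother_inequalities} directly from Definition~\ref{def:firing}. First I would fix a solution $\WW=[W_{ij}]$ of $F(\WW)=\{I_i^k\}$ and let $s=(s_1,\dots,s_n)$ be the corresponding solution of \eqref{eq:motherODE}. By the meaning of $F$, the firing intervals of $s$ are exactly the prescribed $\{I_i^k\}$, so by Definition~\ref{def:firing} the firing rate $\Phi\big(\sum_j W_{ij}s_j(t-\tau_d)+B_i(t)\big)$ equals $1$ precisely on $\bigcup_k I_i^k$ and equals $0$ on the rest of $(0,T]$; since the intervals $I_i^k$ are pairwise disjoint, this right-hand side coincides with $\sum_k\mathcal{X}_{I_i^k}(t)$. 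Hence $s$ is a solution of the decoupled linear system \eqref{eq:motherODE_fire}. As \eqref{eq:motherODE_fire} has bounded (indeed piecewise constant) forcing and prescribed history, it has a unique solution, so the functions $s_j$ in the statement of the lemma are exactly the components of $s$; in particular $g_i(t):=\sum_j W_{ij}s_j(t-\tau_d)+B_i(t)$ is the genuine argument of $\Phi$ for neuron $i$. I expect this identification to be the substantive step, with everything after it being bookkeeping with the Heaviside definition.

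Next I would observe that each $s_j$, being the solution on $(0,T]$ of a scalar inhomogeneous linear ODE with bounded forcing and initial value $s_j^0$ and extended by the history $s_j^0e^{-t}$ on $(-\tau_d,0]$, is continuous on $(-\tau_d,T]$; hence $t\mapsto s_j(t-\tau_d)$ is continuous on $(0,T]$, and therefore $g_i$ is continuous at every $t$ at which $B_i$ is continuous. The two inequalities in \eqref{eq:mother_inequalities} are now immediate from Definition~\ref{def:firing}: for $t\in(t_i^k,t_i^k+\Delta t_i^k)\subset I_i^k$ the neuron fires, so $g_i(t)\geq 0$; for $t\in(0,T]\setminus\bigcup_k I_i^k$ it does not fire, so $g_i(t)<0$.

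For the linear equation \eqref{eq:mother_linear_system} at a point $t^\ast\in\{t_i^k,\,t_i^k+\Delta t_i^k\}_{k=1}^{K(i)}$, note that $t^\ast\in I_i^k$, whence $g_i(t^\ast)\geq 0$. On the other hand, since $I_i^1,\dots,I_i^{K(i)}$ are pairwise disjoint, chronologically ordered, and satisfy $t_i^1>0$, there is a one-sided punctured neighbourhood of $t^\ast$ --- on the left if $t^\ast=t_i^k$, on the right if $t^\ast=t_i^k+\Delta t_i^k$ --- which is contained in $(0,T]\setminus\bigcup_k I_i^k$ and on which $g_i<0$. If $B_i$ is continuous at $t^\ast$, then so is $g_i$, and letting $t\to t^\ast$ along that punctured neighbourhood gives $g_i(t^\ast)\leq 0$. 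Combining the two bounds yields $g_i(t^\ast)=0$, i.e.\ $\sum_j s_j(t^\ast-\tau_d)W_{ij}=-B_i(t^\ast)$, which is \eqref{eq:mother_linear_system}.

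The one point requiring a word of care is the right endpoint of the last firing interval when $t_i^{K(i)}+\Delta t_i^{K(i)}=T$, where no right-hand punctured neighbourhood lies inside $[0,T]$; I would dispose of this by assuming the last firing interval of each neuron does not reach the terminal time $T$ (or else by not asserting \eqref{eq:mother_linear_system} at that single point), the argument being otherwise unchanged. In sum, the proof rests only on Definition~\ref{def:firing}, on the explicit solvability and uniqueness of the linear system \eqref{eq:motherODE_fire}, and on the continuity of solutions of scalar linear ODEs.
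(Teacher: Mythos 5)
Your proof is correct and follows essentially the same route as the paper's: the sign dichotomy coming from Definition~\ref{def:firing} gives the inequalities \eqref{eq:mother_inequalities}, and continuity of the $s_j$ (hence of $\sum_j W_{ij}s_j(t-\tau_d)+B_i(t)$ wherever $B_i$ is continuous) forces equality at the interval endpoints via a one-sided limit taken from outside the firing set. You are in fact somewhat more careful than the paper on two points --- the explicit identification of the solution of \eqref{eq:motherODE} with the unique solution of \eqref{eq:motherODE_fire}, and the degenerate case $t_i^{K(i)}+\Delta t_i^{K(i)}=T$, where the equality genuinely cannot be deduced --- neither of which the paper's shorter proof addresses.
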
 
\begin{proof} 
The $i$'th neuron fires during the time intervals $I_i^1, \, I_i^2, \, \ldots, \, I_i^{K(i)}$. That is, the right-hand side of the ODE in \eqref{eq:motherODE} is equal to one during these time intervals and zero otherwise. Hence, since we employ a Heaviside response function $\Phi$, 
\begin{equation} \label{eq:A1}
    \sum_j s_j(t-\tau_d) W_{ij} + B_i (t) \left\{ \begin{array}{cc}
       \geq 0,  & t \in \bigcup_{k=1}^{K(i)} I_i^k \\
       < 0,  & t \notin \bigcup_{k=1}^{K(i)} I_i^k, 
    \end{array} \right.
\end{equation}
which yields \eqref{eq:mother_inequalities}. 

The solution $s_i$ of \eqref{eq:motherODE_fire} is continuous as the derivative remains bounded. Due to \eqref{eq:A1}, we therefore conclude that \eqref{eq:mother_linear_system} must hold at any $t_i^k$ and $t_i^k+\Delta t_i^k$ where $B_i$ is also continuous as $t_i^k$ and $t_i^k+\Delta t_i^k$ constitute the endpoints of the interval $I_i^k$.
\end{proof}

Assume that the $i$'th neuron does not fire in the time interval $[0,T]$ under consideration. Then, even though the right-hand side of the ODE in \eqref{eq:motherODE_fire} is zero, the synaptic drive of this neuron could potentially have an impact on the solution of the network model \eqref{eq:motherODE}. On the other hand, it seems challenging to extract information about the associated connectivities from zero firing information.   
\begin{corollary}\label{co:no_firing} 
If neuron number $i$ does not fire on $(0,T]$, then the connectivities $W_{i1}, \, W_{i2}, \, \ldots, \, W_{in}$ satisfy 
\begin{equation*}
    \sum_j s_j(t-\tau_d) W_{ij}  < - B_i (t), \quad a.e. \ t \in (0,T]. 
\end{equation*}
\end{corollary}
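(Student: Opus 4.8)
The plan is to deduce this directly from the proof of Lemma~\ref{th:mother_theorem}, in particular from the dichotomy \eqref{eq:A1}. First I would make precise what ``neuron $i$ does not fire on $(0,T]$'' means: by the convention adopted after \eqref{eq:firing_interval}, firing intervals of length zero are discarded since they do not affect the solution of \eqref{eq:motherODE_fire}, so the hypothesis is equivalent to the statement that the set
\[
 \mathcal{F}_i := \Big\{\, t \in (0,T] \;:\; \sum_j W_{ij}\, s_j(t-\tau_d) + B_i(t) \ge 0 \,\Big\}
\]
has Lebesgue measure zero, where $s_j$ solves \eqref{eq:motherODE_fire} (for the index $i$ itself the source term in \eqref{eq:motherODE_fire} is an empty sum, so $s_i(t)=s_i^0 e^{-t}$, consistent with the derivation of \eqref{eq:motherODE_fire}). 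Indeed, by Definition~\ref{def:firing} the $i$'th neuron fires precisely on $\mathcal{F}_i$, and if $|\mathcal{F}_i|>0$ there would be a nondegenerate firing interval, contradicting the assumption.

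Next, since there are no (nondegenerate) firing intervals for $i$, we have $\bigcup_{k=1}^{K(i)} I_i^k = \emptyset$, so only the second branch of \eqref{eq:A1} is available: for every $t \in (0,T]\setminus \mathcal{F}_i$, that is, for almost every $t \in (0,T]$, we have $\sum_j s_j(t-\tau_d) W_{ij} + B_i(t) < 0$. Rearranging yields $\sum_j s_j(t-\tau_d) W_{ij} < - B_i(t)$ a.e.\ on $(0,T]$, which is the claimed inequality.

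I do not expect any genuine obstacle here; this really is a corollary of (the proof of) Lemma~\ref{th:mother_theorem}. The only points requiring a little care are the interpretation of ``does not fire'' in the presence of the measure-zero convention on firing intervals, and the fact that the conclusion is necessarily stated ``a.e.'': if the effective input touches the threshold $0$ on a null set, this is harmless and is exactly why the strict inequality cannot be asserted pointwise. One could equally well argue without invoking \eqref{eq:A1}, simply by negating Definition~\ref{def:firing} on the set of positive measure where neuron $i$ is silent.
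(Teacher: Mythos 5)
Your argument is correct and takes essentially the same route as the paper, which simply invokes the second inequality in \eqref{eq:mother_inequalities} (i.e.\ the second branch of \eqref{eq:A1}) together with the observation that a non-firing neuron has an empty union of firing intervals. Your extra care about the measure-zero convention and the reason the conclusion is only asserted ``a.e.'' is a sensible clarification but does not alter the argument.
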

\begin{proof}
    Follows from the second inequality in \eqref{eq:mother_inequalities} as neuron $i$ does not have any time intervals of firing.
\end{proof}

For any $\hat{i} \in \Gamma$, \eqref{eq:mother_linear_system} yields a linear system which can be solved separately from the others: the unknowns $W_{\hat{i} 1}, \, W_{\hat{i} 2}, \, \ldots , \, W_{\hat{i} n}$ only appear in the system arising when $i=\hat{i} \in \Gamma$. 

We may use \eqref{eq:mother_linear_system}, only considering the equations corresponding to $t=t_i^k$, to write up the linear system 
\begin{equation} \label{eq:baby_linear_system}
    \AA \ww = \bb, 
\end{equation}
where 
\begin{align*}
    \AA & = [a_{kj}^{(i)}] \in \mathbb{R}^{K(i) \times n}, \quad a_{kj}^{(i)} = s_j(t_i^k-\tau_d) \\
    \ww &= [W_{i1} \,  W_{i2} \, \ldots W_{in}]^T \in \mathbb{R}^{n \times 1}, \\
    \bb &= [b_k^{(i)}] \in \mathbb{R}^{K(i) \times 1}, \quad b_k^{(i)} = -B_i(t_i^k). 
\end{align*}
Using the notation  
\begin{align*}
    \tt_i &= [t_i^1-\tau_d, \, t_i^2-\tau_d, \, \ldots, \, t_i^{K(i)}-\tau_d]^T \in \mathbb{R}^{K(i) \times 1}, \\
    \ss_j(\tt_i) &= [s_j(t_i^1-\tau_d), \, s_j(t_i^2-\tau_d), \, \ldots, \, s_j(t_i^{K(i)}-\tau_d)]^T \in \mathbb{R}^{K(i) \times 1}, 
\end{align*} 
that is, $\tt_i$ contains the $\tau_d$-shifted start time instances of the firing events of the $i$'th neuron, and $\ss_j(\tt_i)$ contains the synaptic drive of neuron number $j$ at these time instances, we have
\begin{equation*}
    \AA = [ \ss_1(\tt_i) \, \, \ss_2(\tt_i) \, \, \ldots \, \, \ss_n(\tt_i) ]. 
\end{equation*}
Thus, $\ss_1(\tt_i), \, \ss_2(\tt_i), \, \ldots, \, \ss_n(\tt_i)$ constitute the columns of $\AA$. 

Invoking the rank theorem immediately yields the below corollary.

\begin{corollary} \label{co:nullspace}
    If $K(i) < n$, then $\AA$ has a nontrivial null space and \eqref{eq:baby_linear_system} cannot have a unique solution (or a unique least squares solution). Furthermore, when $K(i) = n$ the rows of $\AA$ are linearly independent if, and only if, the vectors $$\ss_1(\tt_i), \, \ss_2(\tt_i), \, \ldots, \, \ss_n(\tt_i),$$ are linearly independent.  
\end{corollary}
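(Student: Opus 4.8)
The plan is to read off both assertions directly from the rank theorem, i.e.\ from rank--nullity together with the equality of row rank and column rank of a matrix.

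For the first claim, assume $K(i) < n$. The columns of $\AA$ are precisely the vectors $\ss_1(\tt_i), \, \ss_2(\tt_i), \, \ldots, \, \ss_n(\tt_i)$, so $\AA$ has $n$ columns lying in $\R^{K(i)}$, a space of dimension $K(i) < n$; hence $\operatorname{rank}(\AA) \leq K(i) < n$. Rank--nullity then gives $\dim \nullspace{\AA} = n - \operatorname{rank}(\AA) \geq n - K(i) \geq 1$, so $\nullspace{\AA}$ is nontrivial. To turn this into non-uniqueness, I would invoke the standard facts that, if \eqref{eq:baby_linear_system} is consistent, its full solution set is an affine translate of $\nullspace{\AA}$, and that the set of least squares solutions of \eqref{eq:baby_linear_system} is likewise an affine translate of $\nullspace{\AA}$; since $\nullspace{\AA} \neq \{0\}$, neither set can be a singleton.

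For the second claim, when $K(i) = n$ the matrix $\AA$ is square of size $n \times n$, so its rows are linearly independent if and only if $\operatorname{rank}(\AA) = n$, and, because row rank equals column rank, this holds if and only if the columns $\ss_1(\tt_i), \, \ss_2(\tt_i), \, \ldots, \, \ss_n(\tt_i)$ of $\AA$ are linearly independent. This is exactly the claimed equivalence.

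I do not expect a genuine obstacle, since every step is a standard fact of finite-dimensional linear algebra; the only point that warrants a little care is phrasing the least-squares statement precisely --- namely, that a least squares solution of \eqref{eq:baby_linear_system} is unique if and only if $\nullspace{\AA} = \{0\}$, equivalently $\AA$ has full column rank --- which is immediate once the set of least squares solutions is identified with an affine subspace parallel to $\nullspace{\AA}$.
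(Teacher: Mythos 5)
Your proposal is correct and matches the paper's approach: the paper simply states that the corollary follows by ``invoking the rank theorem,'' and your argument is exactly that reasoning spelled out (rank--nullity for the underdetermined case, equality of row and column rank for the square case). No gaps.
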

It thus follows that we cannot expect to uniquely recover $\ww$ from \eqref{eq:baby_linear_system} unless the $i$'th neuron fires $n$ times. In addition, the firing behaviour of the network has to be such that the columns of $\AA$ become linearly independent. 

\begin{remark} 
In principle, \eqref{eq:mother_linear_system} yields two linear equations for each choice of $k$, i.e., for each time interval of firing. The two vectors associated with these two equations read 
\begin{equation} \label{vec:almost_parallel}
\begin{split}
    &[s_1(t_i^k-\tau_d), \, s_2(t_i^k-\tau_d), \, \dots \,, \, s_n(t_i^k-\tau_d)], \\
    &[s_1(t_i^k+\Delta t_i^k-\tau_d), \, s_2(t_i^k+\Delta t_i^k-\tau_d), \, \dots \,, \, s_n(t_i^k+\Delta t_i^k-\tau_d)],
\end{split}   
\end{equation}
which would typically constitute two rows in a system matrix. Using standard theory, we conclude from \eqref{eq:motherODE_fire} that the functions $\{ s_i(t)\}$ are continuous and have uniformly bounded derivatives throughout $(0,T]$. Consequently, if $0 < \Delta t_i^k \ll 1$, the vectors \eqref{vec:almost_parallel} will typically be almost parallel, which could lead to a large condition number of $\AA$. Due to this possibility, we decided to not include the equations corresponding to $t=t_i^k + \Delta t_i^k$, $k=1, \, 2, \, \ldots, \, K(i)$, in system \eqref{eq:baby_linear_system}. 
\end{remark}

\subsection{Singular values} \label{sec:singular_values}
Inverse problems are typically ill-posed, and we have already seen that uniqueness might fail in Corollary \ref{co:nullspace}. The purpose of this subsection is to shed light on the nonzero singular values of $\AA$ since they play a fundamental role for revealing the stability properties of the minimum norm least squares solution
\begin{equation*}
    \ww = (\AA)^\dagger \bb
\end{equation*}
of \eqref{eq:baby_linear_system}, where $(\AA)^\dagger$ denotes the Moore-Penrose/pseudo-inverse of $\AA$. 
Hence, we want to analyze \eqref{eq:baby_linear_system} using the standard framework for linear inverse problems. More precisely, we will show that solving \eqref{eq:baby_linear_system} will become problematic if either the time between successive firing events becomes short or if synchronization occurs. We discuss the case of short time between firing events in detail. The case of synchronization can be derived in a similar manner, considering the columns instead of the rows of $\AA$.     

Recall that we have enumerated the firing events chronically \eqref{ineq:chronically}. Let 
\begin{align*}
    h_i &= \min_k (t_i^{k+1} - t_i^k), \\
    q_i &= \argmin_k (t_i^{k+1} - t_i^k), 
\end{align*}
i.e., $h_i$ represents the minimum time distance for neuron $i$ between two successive firing events and $q_i$ indicates the specific firing event for which this occurs. For simplicity, we omit the subscript $i$ of $h_i$ and $q_i$ in the following. We also use the notation $\ee_q$ for the $q$'th standard basis vector, and $\nullspace{[\AA]^T}$ represents the null space of $[\AA]^T$. 
The orthogonal projection of $\ee_{q+1}-\ee_q$ onto $\nullspace{[\AA]^T}^{\perp} = \textnormal{col}(\AA)$ is denoted $\widehat{\ee_{q+1}-\ee_q}$. 

\begin{theorem}\label{th:condA}
    Let $\sigma_1 \geq \sigma_2 \geq \ldots \geq \sigma_r > 0$ denote the nonzero singular values of $\AA$. Assume that $\ee_{q+1}-\ee_q \notin \nullspace{[\AA]^T}$. Then,   
\begin{align*}
    \sigma_1 &\geq c_1, \\
    \sigma_r &\leq \frac{c_2}{\| \widehat{\ee_{q+1}-\ee_q} \|} \, h,
\end{align*}
where 
\begin{align*}
    c_1 &= \sqrt{\sum_j \left( e^{-T} s_j^0 \right)^2}, \\
    c_2 &= \sqrt{\sum_j \left( s_j^0 + 1 \right)^2}. 
\end{align*}
Hence, if $\AA$ is invertible, its spectral condition number obeys 
\begin{equation*}
    \textnormal{cond} (\AA) \geq \frac{\sqrt{2} c_1}{c_2} \, h^{-1}. 
\end{equation*}
\end{theorem}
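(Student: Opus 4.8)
The plan is to read off both inequalities from the Rayleigh‑quotient / operator‑norm characterisations of the singular values of $\AA$, after first recording elementary estimates on the synaptic drives $s_j$ and their derivatives. Since \eqref{eq:motherODE_fire} is a decoupled family of scalar inhomogeneous linear ODEs with $0\le\sum_k\mathcal{X}_{I_j^k}\le 1$, variation of constants gives, for $t\in(0,T]$, $s_j(t)=s_j^0 e^{-t}+\int_0^t e^{-(t-u)}\sum_k\mathcal{X}_{I_j^k}(u)\,du$, whence $s_j^0 e^{-T}\le s_j(t)\le s_j^0+1$ on $(0,T]$ (and $s_j(t)=s_j^0 e^{-t}\ge s_j^0\ge s_j^0 e^{-T}$ on $(-\tau_d,0]$), and differentiating the ODE yields $|s_j'(t)|=|{-}s_j(t)+\sum_k\mathcal{X}_{I_j^k}(t)|\le s_j^0+1$ on $(0,T]$. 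These are exactly the numbers $a_{kj}^{(i)}=s_j(t_i^k-\tau_d)$ that make up $\AA$.

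For $\sigma_1$: because $\sigma_1=\|\AA\|_2=\|[\AA]^T\|_2\ge\|[\AA]^T\ee_k\|$ for every $k$, and $[\AA]^T\ee_k$ is the $k$‑th row $\big(s_1(t_i^k-\tau_d),\dots,s_n(t_i^k-\tau_d)\big)$ of $\AA$, the lower bound $s_j(t_i^k-\tau_d)\ge s_j^0 e^{-T}$ immediately gives $\sigma_1\ge\sqrt{\sum_j(s_j^0 e^{-T})^2}=c_1$.

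For $\sigma_r$: the nonzero singular values of $\AA$ coincide with those of $[\AA]^T$, and on $\nullspace{[\AA]^T}^{\perp}=\textnormal{col}(\AA)$ the map $[\AA]^T$ is injective, so $\sigma_r=\min\{\,\|[\AA]^T y\|/\|y\| : 0\neq y\in\textnormal{col}(\AA)\,\}$. I would test this minimum with $y=\widehat{\ee_{q+1}-\ee_q}$, which is nonzero precisely because $\ee_{q+1}-\ee_q\notin\nullspace{[\AA]^T}$; since $\ee_{q+1}-\ee_q-\widehat{\ee_{q+1}-\ee_q}\in\nullspace{[\AA]^T}$ we get $[\AA]^T y=[\AA]^T(\ee_{q+1}-\ee_q)$, whose $j$‑th entry is $s_j(t_i^{q+1}-\tau_d)-s_j(t_i^{q}-\tau_d)$. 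The mean value theorem together with $|s_j'|\le s_j^0+1$ bounds each entry in modulus by $(s_j^0+1)(t_i^{q+1}-t_i^{q})=(s_j^0+1)h_i$, so $\|[\AA]^T(\ee_{q+1}-\ee_q)\|\le c_2 h_i$ and therefore $\sigma_r\le c_2 h_i/\|\widehat{\ee_{q+1}-\ee_q}\|$. For the condition number, if $\AA$ is invertible it is square and of full rank, so $\nullspace{[\AA]^T}=\{0\}$, the projection onto $\textnormal{col}(\AA)=\R^n$ is the identity, $\widehat{\ee_{q+1}-\ee_q}=\ee_{q+1}-\ee_q$ has norm $\sqrt2$ (and the hypothesis $\ee_{q+1}-\ee_q\notin\nullspace{[\AA]^T}$ holds trivially); combining with $\sigma_1\ge c_1$ and $r=n$ yields $\textnormal{cond}(\AA)=\sigma_1/\sigma_r\ge\sqrt2\,c_1/(c_2 h_i)$.

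The ODE estimates are routine; the one place that needs care is the derivative bound on the prehistory, where $|s_j'(t)|=s_j^0 e^{-t}$ on $(-\tau_d,0]$ is dominated by $s_j^0+1$ only when the shifted firing times $t_i^k-\tau_d$ already lie in $(0,T]$ (otherwise the prehistory contributes an $\mathcal{O}(e^{\tau_d})$ term that must be absorbed into $c_2$), so I would either impose this as a hypothesis or adjust the constant. The genuinely non‑mechanical idea is the choice of the test direction $\ee_{q+1}-\ee_q$: it converts a small gap between successive firings into a near‑dependence of two rows of $\AA$, and the projection step shows that $[\AA]^T$ acts on $\widehat{\ee_{q+1}-\ee_q}$ exactly as on $\ee_{q+1}-\ee_q$ while the normalisation can only shrink, which is what produces the factor $\|\widehat{\ee_{q+1}-\ee_q}\|^{-1}$ (and the $\sqrt2$ in the invertible case).
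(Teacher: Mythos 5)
Your proof is correct and follows essentially the same route as the paper: the same elementary bounds $s_j^0e^{-T}\leq s_j\leq s_j^0+1$ and $|s_j'|\leq s_j^0+1$, a row of $\AA$ as test vector for $\sigma_1$, and the projected difference $\widehat{\ee_{q+1}-\ee_q}$ in the Rayleigh quotient for $\sigma_r$, with the mean value theorem converting the gap $h_i$ into two nearly parallel rows. Your caveat about the prehistory is a fair catch that the paper glosses over: it bounds $|s_j'|$ only on $[0,T]$ yet applies the mean value theorem on $[t_i^q-\tau_d,\,t_i^{q+1}-\tau_d]$, which dips below $0$ whenever $t_i^q<\tau_d$, and there $|s_j'(t)|=s_j^0e^{-t}$ can exceed $1+s_j^0$, so strictly one should either enlarge $c_2$ to $\sqrt{\sum_j\bigl(\max\{s_j^0+1,\,s_j^0e^{\tau_d}\}\bigr)^2}$ or assume $t_i^1\geq\tau_d$, exactly as you suggest.
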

\begin{proof}
Consider the following two rows in the matrix $\AA$, 
\begin{equation*}
    \begin{split}
    \aa_q &= [s_1(t_i^q-\tau_d), \, s_2(t_i^q-\tau_d), \, \dots \,, \, s_n(t_i^q-\tau_d)], \\
    \aa_{q+1} &= [s_1(t_i^{q+1}-\tau_d), \, s_2(t_i^{q+1}-\tau_d), \, \dots \,, \, s_n(t_i^{q+1}-\tau_d)].
\end{split}   
\end{equation*}
The difference between these two vectors 
\begin{equation*}
    \dd = [d_1, \, d_2, \, \ldots, d_n] = \aa_{q+1} - \aa_q,
\end{equation*}
will have components which obey, due to the mean value theorem, 
\begin{equation*}
    |d_j| \leq D_j \, h \quad j=1, \, 2, \, \ldots, n,  
\end{equation*}
where
\begin{align*}
    \sup_{t\in[0,T]} \left|\frac{ds_j}{dt}(t) \right| \leq \max \left\{\sup_{t\in[0,T]}s_j(t), \sup_{t\in[0,T]} |1-s_j(t)| \right\}\leq 1+s_j^0 := D_j. 
\end{align*}
The last inequality follows from
\begin{align*}
    s_j^0e^{-t} \leq s_j(t) \leq s_j^0e^{-t}+1,
\end{align*}
which can be derived using \eqref{eq:motherODE} and remembering that $\Phi \in \{0,1\}$.
Using the Rayleigh quotient, and standard results, 
\begin{align*}
    \sigma_r^2 &= \min_{\xx \in \nullspace{[\AA]^T}^\perp} \frac{\left\langle \xx, \AA [\AA]^T \xx \right\rangle}{\| \xx \|^2} \\
    & \leq \frac{\left\langle [\AA]^T(\widehat{\ee_{q+1}-\ee_q}), [\AA]^T (\widehat{\ee_{q+1}-\ee_q}) \right\rangle}{\| \widehat{\ee_{q+1}-\ee_q} \|^2} \\
    & = \frac{\left\langle [\AA]^T(\ee_{q+1}-\ee_q), [\AA]^T (\ee_{q+1}-\ee_q) \right\rangle}{\| \widehat{\ee_{q+1}-\ee_q} \|^2} \\
    & = \frac{\| \aa_{q+1} - \aa_{q} \|^2 }{\| \widehat{\ee_{q+1}-\ee_q} \|^2} \\
    & \leq \frac{h^2 \sum_j D_j^2}{\| \widehat{\ee_{q+1}-\ee_q} \|^2} \\
    & = \frac{h^2 \sum_j (1+s_j^0)^2}{\| \widehat{\ee_{q+1}-\ee_q} \|^2}.
\end{align*}
Note that if $\AA$ is invertible, then $\widehat{\ee_{q+1}-\ee_q} = \ee_{q+1}-\ee_q$ and hence $\| \widehat{\ee_{q+1}-\ee_q} \|^2=2$.
Finally, since 
\begin{align*}
    s_j(t) \geq e^{-T} s_j^0, \quad t \in (-\tau_d,T], 
\end{align*}
\begin{align*}
    \sigma_1^2 &= \lambda_{\max} \left\langle \AA [\AA]^T \right\rangle \\
    &= \max_{\xx} \frac{\left\langle \xx, \AA [\AA]^T \xx \right\rangle}{\| \xx \|^2} \\
    & \geq \frac{\left\langle [\AA]^T \ee_q, [\AA]^T \ee_q \right\rangle}{\| \ee_q \|^2} \\
    & = \|\aa_{q} \|^2  \\
    & \geq \sum_j (e^{-T} s_j^0)^2. 
\end{align*}
\end{proof}

According to the above theorem, the firing intervals should (ideally) not appear too close in time as $h$ would become small and consequently, some rows of $\AA$ could become almost parallel. Then, the stability of the present parameter estimation problem deteriorates -- even if we decide to only search for least squares solutions of \eqref{eq:baby_linear_system} in the orthogonal complement of the null space of $\AA$.

\subsection{The algorithm}\label{subsec:algorithm}
For the sake of simplicity, we will not use the inequalities \eqref{eq:mother_inequalities} to potentially improve the recovery of the connectivities, or try to compute $ \left[ W_{ij} \right]_{j=1}^n$ for $i \notin \Gamma$. Recall that $\Gamma$ contains the indices corresponding to neurons that fire, cf. \eqref{set:neurons_that_fire}. The algorithm is presented using truncated SVD (TSVD) as the regularization method, see for example \cite{hansen2010discrete}. This method was chosen due to its simplicity. Note however that other methods for regularization could be used as well. 

For a given set of firing intervals $\{ I_i^k \}$, the algorithm for estimating the connectivities reads: 
\begin{enumerate}
    \item[a)] For $i=1,\, 2, \, \ldots, \, n$ solve the decoupled ODEs \eqref{eq:motherODE_fire}. This yields \\ $s_1(t), \, s_2(t), \, \dots, \, s_n(t)$.   
    \item[b)] For $i \in \Gamma$
    \begin{enumerate}[label = (\roman*)]
        \item Use $s_1(t), \, s_2(t), \, \dots, \, s_n(t)$ to compute the matrix entries of $\AA$.
        \item Use TSVD regularization to find an approximation to the solution of 
            \begin{equation} \label{regularized_problem}
                \min_{\ww} \| \AA \ww - \bb \|_2. 
            \end{equation}
    \end{enumerate}
\end{enumerate}
The regularized solution is then given by
\begin{equation}\label{eq:regularized_sol}
    \ww_{\kappa} = \sum_{j=1}^{\kappa} \frac{\langle \mathsf u^{(i)}_j,\bb\rangle}{\sigma^{(i)}_j}\mathsf v^{(i)}_j,
\end{equation}
where $\langle \mathsf u,\mathsf v\rangle = \mathsf u^T\mathsf v$, and $\sigma^{(i)}_j$, $\mathsf u_j^{(i)}$ and $\mathsf v_j^{(i)}$ are the singular values, the left singular vectors and right singular vectors of $\AA$, respectively. Note that the regularized solution \eqref{eq:regularized_sol} approaches the minimum norm least squares solution as $\kappa \rightarrow n$ when TSVD regularization is employed.

\subsection{Error considerations in inverse problems}\label{subsec:error}
As mentioned, inverse problems are usually ill-posed, meaning that small inaccuracies in the input data can lead to significant errors in the solution. This inherent instability is a fundamental characteristic of such problems, which directly impacts error behaviour and the interpretation of classical error bounds.

In our case, inaccuracies in the firing intervals introduce changes in the matrix $\AA$, and noise in the external input $B_i$ contributes changes to the vector $\bb$, in \eqref{eq:baby_linear_system}. Let $\ww$ and $\tilde{\mathsf{w}}^{(i)}$ be solutions to $\AA\ww = \bb$ and $\AA\tilde{\mathsf{w}}^{(i)} = \bb_\delta$, respectively. Here $\bb_\delta$ is a perturbed right-hand side. As long as $\AA$ is invertible and $\bb\neq \bf{0}$, a classical result, see for example \cite{lyche2020numerical}, states that 
\begin{equation} \label{eq:error_bound_b}
    \frac{1}{K(\AA)}\frac{\|\bb-\bb_\delta\|}{\|\bb\|} \leq \frac{\|\tilde{\mathsf{w}}^{(i)}-\ww\|}{\|\ww\|} \leq K(\AA)\frac{\|\bb-\bb_\delta\|}{\|\bb\|},
\end{equation}
where $K(\AA)$ is the condition number of $\AA$. Similarly, if $\AA$ is perturbed, i.e., $\tilde{\mathsf{w}}^{(i)}$ solves $\AA_\delta \tilde{\mathsf{w}}^{(i)} = \bb$, where $\AA_\delta = \AA + \mathsf{E}$ is the perturbed matrix, the relative error satisfies the bound
\begin{equation} \label{eq:error_bound_A}
    \frac{\|\tilde{\mathsf{w}}^{(i)}-\ww\|}{\|\ww\|} \leq \frac{K(\AA)}{1-\|(\AA)^{-1}\mathsf{E}\|}\frac{\|\mathsf{E}\|}{\|\AA\|},
\end{equation}
provided that $\|(\AA)^{-1}\mathsf{E}\| < 1$.

While \eqref{eq:error_bound_b} and \eqref{eq:error_bound_A} suggest useful bounds in theory, they rely heavily on $K(\AA)$. For inverse problems, the condition number $K(\AA)$ is typically large, such that these classical bounds provide no useful information. This behaviour is characteristic for inverse problems and reflects their inherent instability, as error amplification is expected and standard results about error propagation are often not useful for characterizing errors in the solution.

In our case, as demonstrated by Theorem \ref{th:condA}, the condition number of $\AA$ can become large, typically leading to significant error amplification. To mitigate this issue, it is standard to apply regularization methods, which reduce error amplification but introduce additional modelling errors. This creates a trade-off between approximating the solution of \eqref{eq:baby_linear_system} and avoiding critical error amplification.

Employing TSVD as the regularization method, the regularized solution is given by \eqref{eq:regularized_sol}, i.e., the regularized solution $\ww_\kappa$ is the solution of $\AA_\kappa \ww_\kappa = \bb$, where $\AA_\kappa = \sum_{j=1}^\kappa \mathsf{u}^{(i)}_j \sigma^{(i)}_j (\mathsf{v}^{(i)}_j)^T$. Let $\tilde{\mathsf{w}}^{(i)}_\kappa$ solve the perturbed system $\AA_\kappa \tilde{\mathsf{w}}^{(i)}_\kappa = \bb_\delta$. Using the triangle inequality, the total error between the true solution of \eqref{eq:baby_linear_system} and the regularized solution can be bounded as
\begin{equation}\label{eq:error_trig}
    \|\ww - \tilde{\mathsf{w}}^{(i)}_\kappa\| \leq \|\ww - \mathsf{w}_\kappa^{(i)}\| + \|\mathsf{w}_\kappa^{(i)} - \tilde{\mathsf{w}}^{(i)}_\kappa\|.
\end{equation}
This bound highlights the two sources of error: the regularization error and the error due to perturbations in the data. Increased regularization reduces the condition number of $\AA$ which in turn reduces the error amplification from perturbations in the data, see \eqref{eq:error_bound_b}. So, when $\kappa$ is decreased, i.e., increased regularization, the error from perturbations is reduced. However, the regularization error, the first term on the right-hand side in \eqref{eq:error_trig}, is increased. On the other hand, increasing $\kappa$ has the opposite effect. Thus, choosing an appropriate $\kappa$ is crucial for achieving the best possible compromise between the two sources for error.

The method for choosing the regularization parameter $\kappa$ depends on the source of the noise. The idea behind the selection method is to avoid including the SVD components where the noise dominates, to prevent overfitting the noise. For noise in $\bb$, Morozov's discrepancy principle, see for example \cite[Sec.~5.2]{hansen2010discrete}, could be used to select $\kappa$. Specifically, $\kappa$ is chosen such that
\begin{equation}\label{eq:Dsicrep_b}
    \| \AA \ww_\kappa - \bb_\delta\|_2 \geq \|\bb - \bb_\delta\|_2 > \| \AA \ww_{\kappa+1} - \bb_\delta\|_2.
\end{equation}

One crucial assumption for the accuracy of Morozov's discrepancy principle is that the noise only occurs in $\bb$. Therefore, when noise occurs in $\AA$, an adjusted discrepancy principle is required. First note that
\begin{equation*}
    \|\AA_\delta \ww - \bb\|_2 = \|(\AA_\delta - \AA) \ww\|_2 \approx \|(\AA_\delta - \AA)\ww_\kappa\|_2,
\end{equation*}
where we have used that $\bb=\AA\ww$. An adjusted discrepancy principle is then as follows: choose $\kappa$ such that
\begin{equation}\label{eq:Dsicrep_fire}
    \| \AA_\delta \ww_\kappa - \bb\|_2 \geq \|(\AA_\delta - \AA)\ww_\kappa\|_2 > \| \AA_\delta \ww_{\kappa+1} - \bb\|_2.
\end{equation}

A more thorough error investigation for the problem considered in this manuscript would require detailed information about the singular value distribution and precise information about the noise\footnote{Considering synthetic data, this is not a problem as the noise is given, but in practice this is a bottleneck.}. Information about the singular value distribution is at the moment out of reach due to the nonlinearity of the forward map \eqref{def:forwardOperator} and the form of the entries of $\AA$, cf. \eqref{eq:baby_linear_system}.

\section{Continuity of the forward operator} \label{subsec:continuity_forward}
In this section we establish the aforementioned continuity property of the forward operator $F$ defined in \eqref{def:forwardOperator}. We first prove local continuity under a threshold condition, before investigating sufficient criteria on the external input guaranteeing that the condition holds.

Whether neuron $i$ fires or not is determined by the 
sign of $G_i(t-\tau_d)+B_i(t)$, where $$G_i(t) = \sum_j W_{ij}s_j(t),$$ is the contribution from all the neurons in the network and $s_j$ denotes the solutions of \eqref{eq:motherODE}. It is therefore convenient to define the following operator for $t\in[0,T]$,
\begin{align}\label{def:Goperator}
Q_t: (\R^{n\times n}, \|\cdot\|_{\max})  \quad & \rightarrow \quad  (\Gspace, \|\cdot\|_\Gspace ), \\
\WW \quad & \mapsto \quad \mathsf G := [G_1, G_2, \dots, G_n], \nonumber
\end{align}
where 
$$\Gspace = (L^\infty([-\tau_d,t]))^n \quad \mathrm{and} \quad \| \mathsf G \|_{\Gspace} = \max_i \|G_i\|_{L^\infty([-\tau_d,t])}.$$
Before proving the (local) continuity of $F$ we must show the continuity of $Q_T$.  First, by multiplying \eqref{eq:motherODE} by $W_{ij}$ and summing over $j$, we observe that $G_i$ solves
\begin{subequations} \label{eq:G-ODE}
\begin{align}\label{eq:G-ODE-diff}
\frac{dG_i}{dt}(t) + G_i(t) &= \sum_j W_{ij}\Phi \left( G_j(t-\tau_d) + B_j(t) \right), \quad t \in (0,T],\\
G_i(t)&=e^{-t}\sum_j W_{ij} s_j^0, \quad t \in [-\tau_d,0]. \label{eq:G-ODE-init}
\end{align}
\end{subequations}
The dynamics in the ODE \eqref{eq:G-ODE-diff} change when a jump occurs in the sum on the right-hand side of the equation. It will become evident in the upcoming proof of the continuity of $Q_T$ that it is useful to separate the times where a jump could occur in the right-hand side of \eqref{eq:G-ODE-diff} from the times where no jump occurs. Due to the Heaviside function $\Phi$, a jump can only occur around times $t$ where $|G_i(t-\tau_d)+B_i(t)|$ is small. We therefore define the following two sets,
\begin{align}\label{eq:def-beta}
\begin{split}
    \beta_{\gamma}^i(z) &= \{ \ t \in [0,z] \ | \ |G_i(t-\tau_d) +B_i(t)| \leq \gamma^{-1}\}, \quad \gamma\in\mathbb{N},\\
    \beta^i(z) &= \{ \ t \in [0,z] \ | \ G_i(t-\tau_d) +B_i(t) = 0\}.
\end{split}
\end{align} 
A crucial observation about $\beta_\gamma^i$ which we will use in the upcoming proof is stated in the next remark.
\begin{figure}
    \centering
    \includegraphics[width=0.95\linewidth]{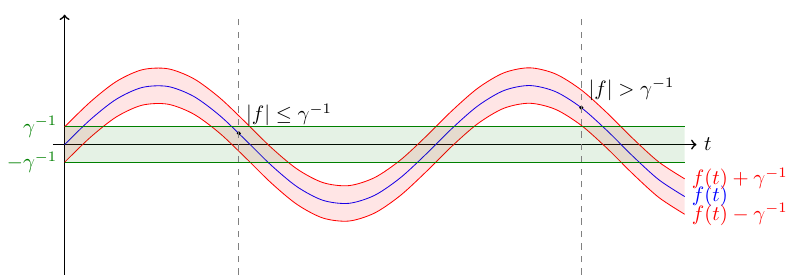}
    \caption{Illustration of Remark \ref{remark:beta}. When $f$ (in blue) satisfies $|f(\tilde t)| >\gamma^{-1}$ for a time $\tilde t$, i.e., the point $(\tilde t, f(\tilde t))$ is outside the green area, any function $g(t)$ satisfying $|f(t)-g(t)|<\gamma^{-1}$, i.e., the graph of $g(t)$ is within the red area, will have the same sign as $f(t)$ at $t=\tilde t$.}
    \label{fig:beta}
\end{figure}
\begin{remark}\label{remark:beta}
   If the blue curve in Figure \ref{fig:beta} is the graph of $G_i(t-\tau_d) + B_i(t)$, then $\beta_\gamma^i(z)$ contains all times $t\in[0,z]$ where $\left(t,G_i(t-\tau_d) +B_i(t)\right)$ is located within the green area.
   Furthermore, let $\tilde G_i$ be a different function satisfying 
    \begin{equation}\label{eq:remark_G}
        \left|G_i(t-\tau_d)-\tilde G_i(t-\tau_d)\right|\leq \frac{1}{\gamma}
    \end{equation} 
    for $t\in[0,z]$. Then, 
    \begin{align*}
        \left|G_i(t-\tau_d) + B_i(t) - \left(\tilde G_i(t-\tau_d) + B_i(t)\right) \right|
        &\leq \frac{1}{\gamma}.
    \end{align*}
    Consequently, the graph of $\tilde G_i(t-\tau_d)+B_i(t)$ will be within the red area and we can conclude the following:
    \begin{itemize}
        \item if $t \notin \beta^i_{\gamma}\big(z\big)$, $G_i(t-\tau_d)+ B_i(t)$ and $\tilde G_i(t-\tau_d) + B_i(t)$ are of the same sign and not equal to 0,
        \item if $t \in \beta^i_{\gamma}\big(z\big)$, $G_i(t-\tau_d)+ B_i(t)$ and $\tilde G_i(t -\tau_d) + B_i(t)$ might be of opposite sign.
    \end{itemize}
\end{remark} 

We are now ready to show that $Q_T$ defined in \eqref{def:Goperator} is continuous. As mentioned, we will need a threshold condition. More specifically, we assume that $\lim_{\gamma \to \infty} |\beta_{\gamma}^i(T)|=0$ for 
$i=1,2,\ldots,n$. 
\begin{lemma}\label{lem:Q_cont}
Let $T>0$ and assume that $ \lim_{\gamma \to \infty} |\beta_{\gamma}^i(T)| =0$ for $i=1,2,\ldots,n$ for the choice $\WW\in\R^{n\times n}$. Then $Q_T$ defined in \eqref{def:Goperator} is continuous at $\WW$. 
\end{lemma}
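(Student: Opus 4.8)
I would prove the lemma by a Grönwall-type argument applied to the difference $G_i - \tilde G_i$, where $\tilde G_i = (Q_T(\tilde{\WW}))_i$ for $\tilde{\WW}$ close to $\WW$ in $\|\cdot\|_{\max}$. Fix $\eps>0$. The first step is to choose $\gamma$ large enough that $|\beta_\gamma^i(T)| < \delta$ for all $i$, where $\delta$ is a small parameter to be fixed later; this is exactly what the threshold hypothesis $\lim_{\gamma\to\infty}|\beta_\gamma^i(T)|=0$ buys us. Then set $h_i(t) = G_i(t-\tau_d)-\tilde G_i(t-\tau_d)$ and subtract the two copies of \eqref{eq:G-ODE-diff}. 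Because the forcing term is a sum of Heaviside evaluations, the key structural fact (Remark \ref{remark:beta}) is that whenever $|h_j(t)| \leq 1/\gamma$ and $t\notin\beta_\gamma^j(T)$, the two Heaviside terms $\Phi(G_j(t-\tau_d)+B_j(t))$ and $\Phi(\tilde G_j(t-\tau_d)+B_j(t))$ agree, so the forcing difference vanishes off $\bigcup_j\beta_\gamma^j(T)$; on $\beta_\gamma^j(T)$ the forcing difference is crudely bounded by $2\sum_j|W_{ij}| \leq 2n(\|\WW\|_{\max}+1)$ once $\tilde{\WW}$ is within distance $1$ of $\WW$.

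**The continuation argument.** The delicate point is that Remark \ref{remark:beta} only applies where $|h_j|\leq 1/\gamma$ is \emph{already} known, so one cannot simply plug in the global bound; I would run a bootstrap. Let $\eta(t) = \max_i \sup_{[-\tau_d,t]}|h_i|$. On the initial interval $[-\tau_d,0]$ we have $\eta(0) \leq \|\WW-\tilde{\WW}\|_{\max}\sum_j s_j^0 =: C_0 \|\WW-\tilde{\WW}\|_{\max}$ directly from \eqref{eq:G-ODE-init}. For $t>0$, the variation-of-constants formula applied to $h_i' + h_i = (\text{forcing difference})$ gives
\begin{equation*}
|h_i(t)| \leq e^{-t}|h_i(0)| + \int_0^t e^{-(t-\tau)}\Big|\sum_j W_{ij}\big[\Phi(G_j+B_j) - \Phi(\tilde G_j+B_j)\big]\Big|\,d\tau.
\end{equation*}
Split the integral into the part over $\bigcup_j\beta_\gamma^j(T)$ (measure $\leq n\delta$, integrand $\leq 2n(\|\WW\|_{\max}+1)$ — call the product $C_1\delta$) plus a remainder term supported where \emph{some} $|h_j(\tau)| > 1/\gamma$, i.e. where $\eta(\tau) > 1/\gamma$. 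Over the time-delayed interval this remainder is controlled by $\{\tau : \eta(\tau-\tau_d) > 1/\gamma\}$. I would argue by contradiction on the first time $\tt$ at which $\eta(\tt)$ reaches $1/\gamma$: on $[0,\tt]$ the remainder integral is empty, so $\eta(\tt) \leq C_0\|\WW-\tilde{\WW}\|_{\max} + C_1\delta$; choosing $\delta$ small and then $\|\WW-\tilde{\WW}\|_{\max}$ small makes the right side strictly below $1/\gamma$, contradicting $\eta(\tt)=1/\gamma$. Hence $\eta(t) < 1/\gamma$ on all of $[0,T]$, the remainder integral never activates, and we get the uniform bound $\|Q_T(\WW) - Q_T(\tilde{\WW})\|_\Gspace = \eta(T) \leq C_0\|\WW-\tilde{\WW}\|_{\max} + C_1\delta$.

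**Closing and the main obstacle.** Finally, given $\eps>0$, first pick $\gamma$ (via the threshold hypothesis) so that $n|\beta_\gamma^i(T)| =: n\delta$ satisfies $C_1\delta < \eps/2$ and also $C_0\cdot(\text{eventual radius}) + C_1\delta < 1/\gamma$; then shrink the radius so $C_0\|\WW-\tilde{\WW}\|_{\max} < \eps/2$, yielding $\|Q_T(\WW)-Q_T(\tilde{\WW})\|_\Gspace < \eps$, which is continuity at $\WW$. The main obstacle I anticipate is handling the interaction between the time delay $\tau_d$ and the bootstrap: the forcing difference at time $\tau$ depends on $h_j(\tau-\tau_d)$, so the "first crossing time" argument must be set up on the shifted interval and one has to be careful that $\eta$ is monotone nondecreasing (which it is, being a running sup) so that controlling $\eta$ up to time $\tt$ genuinely controls the delayed argument. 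A secondary subtlety is that $\beta_\gamma^i$ is defined via $G_i$ (the fixed solution), not $\tilde G_i$, which is exactly why Remark \ref{remark:beta} is phrased asymmetrically — I must make sure every application of it uses the perturbed function only in the conclusion, never in the defining set.
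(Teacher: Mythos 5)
Your reduction of the Heaviside forcing difference to the exceptional sets $\beta_\gamma^j$ via Remark \ref{remark:beta}, and your care about its asymmetry, are both on target, but the continuation argument has a genuine gap at the closing step. To contradict $\eta(\tilde t)=1/\gamma$ at the first crossing time you need $C_0\|\WW-\tilde{\WW}\|_{\max}+C_1\delta<1/\gamma$ with $\delta\approx\max_i|\beta_\gamma^i(T)|$, i.e.\ you need $\gamma\,|\beta_\gamma^i(T)|$ to be small for some admissible $\gamma$. The hypothesis only gives $|\beta_\gamma^i(T)|\to 0$ with no rate: if, say, $|\beta_\gamma^i(T)|\sim \gamma^{-1/2}$ or $\sim 1/\log\gamma$, then $\gamma|\beta_\gamma^i(T)|\to\infty$ and no choice of $\gamma$ closes the bootstrap. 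The circularity is structural: in your scheme the single parameter $\gamma$ plays two coupled roles --- it is the input tolerance required to invoke Remark \ref{remark:beta} (so the achieved error must stay below $1/\gamma$) and it indexes the exceptional set whose measure feeds into that very error; tightening one tightens the other, and the hypothesis does not let you win that race. (Replacing the indicator of $\{\eta>1/\gamma\}$ by $\gamma\eta$ and applying Gr\"onwall only makes it worse, producing a factor $e^{C\gamma T}$.)

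The paper escapes this by exploiting the delay instead of working around it: it proves by induction that $Q_{m\tau_d}$ is continuous for $m=0,1,\ldots,T/\tau_d$. On $[(m-1)\tau_d,m\tau_d]$ the forcing involves only $G_j(t-\tau_d)$ with $t-\tau_d\leq(m-1)\tau_d$, already controlled by the induction hypothesis; so one first fixes $\gamma$ from the target accuracy $\eps$ alone, and then shrinks $\delta_1$ until the \emph{previous} block's error is below $1/\gamma$. The output error at step $m$ only needs to be below $\eps$, not below $1/\gamma$, because it re-enters the equation only at step $m+1$, where a fresh $\gamma$ and a fresh, smaller $\delta$ are chosen. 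This decouples the Remark \ref{remark:beta} tolerance from the achieved error and removes any need for a decay rate on $|\beta_\gamma^i(T)|$. Your argument becomes correct if you run each continuation step on a single block of length $\tau_d$ and restart --- at which point the first-crossing-time device is unnecessary. A minor further point: your variation-of-constants display omits the term $\sum_j(W_{ij}-\tilde W_{ij})\Phi(\tilde G_j+B_j)$, which contributes an extra $O(\|\WW-\tilde{\WW}\|_{\max})$; harmless, but it should appear.
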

\begin{proof}
To show continuity of $Q_T$, we will use an induction argument to show that $Q_{m\tau_d}$ is continuous for $m=0,1,\ldots T/\tau_d$. If $T/\tau_d\notin\mathbb N$ the proof still holds true with a minor adjustment. First, let $\tilde \WW$ be a perturbation of $\WW$ and define $H_i(t)=G_i(t)-\tilde G_i(t)$, where $G_i(t)$ and $\tilde G_i(t)$ denote the solutions of \eqref{eq:G-ODE} with $\WW$ and $\tilde \WW$, respectively. The difference $H_i(t)$ solves
\begin{align}\label{eq:H_ode}
\begin{split}
\frac{dH_i}{dt}(t) + H_i(t) &= \sum_j W_{ij}\Phi \big( G_j(t-\tau_d) + B_j(t) \big
)\\
& \qquad  - \sum_j\tilde W_{ij}\Phi \big(\tilde G_j(t-\tau_d) + B_j(t) \big), \quad t \in (0,T],\\
H_i(t)&= e^{-t}\sum_j(W_{ij}-\tilde W_{ij}) s_j^0, \quad t \in [-\tau_d,0].
\end{split}
\end{align}
For the base case in the induction argument we set $m=0$, i.e., $t\in[-\tau_d,0]$ in \eqref{eq:H_ode}, and thus
\begin{equation*}
    |H_i(t)|\leq \|s_j^0\|_{\ell^1}\|\WW-\tilde \WW\|_{\max}.
\end{equation*}
Taking the maximum with respect to $i$, we conclude that $Q_0$ is continuous at $\WW$. 

Now, let $t\in[(m-1)\tau_d,m\tau_d]$ for $m\geq1$ in \eqref{eq:H_ode}, and pick an arbitrary $\varepsilon>0$. Assume that $Q_{(m-1)\tau_d}$ in \eqref{def:Goperator} is continuous at $\WW$. Thus, for all $\gamma\in \mathbb{N}$ there exists a $\delta_1>0$ such that $|G_j(t-\tau_d)-\tilde G_j(t-\tau_d)|<\gamma^{-1}$ for all $j$ and $t\in[0,m\tau_d]$ when $0<\|\WW- \tilde \WW \|_{\max} <\delta_1$. So, $\tilde G_j$ satisfies condition \eqref{eq:remark_G} in Remark \ref{remark:beta}, and the difference
\begin{equation*}
    \Phi \big(G_j(t-\tau_d)+B_j(t) \big) - \Phi \big(\tilde G_j(t-\tau_d)+B_j(t) \big), \ t \in [0,m\tau_d],
\end{equation*}
can be nonzero only at times when $|G_j(t-\tau_d) + B_j(t)|<\gamma^{-1}$, i.e., at times included in the set $\beta_{\gamma}^j(m\tau_d)$. Due to the assumption $\lim_{\gamma\rightarrow\infty}|\beta^i_\gamma(T)|= 0$, there exists $ \eta \in \mathbb N$ s.t. 
\begin{equation} \sum_j  |\beta_{\gamma}^j(m\tau_d)| <\frac{\varepsilon}{2\|\WW\|_{\max}}\quad\forall\ \gamma\in \mathbb{N} \ \textnormal{ s.t. } \ 0<\frac{1}{\gamma}\leq\frac{1}{\eta}. \label{eq:eta}
\end{equation} 

Let $\delta_2 = \min\left\{\delta_1,\frac{\varepsilon}{2(\|s_j^0\|_{\ell^1}+n)}\right\}$, where $\delta_1$ is such that $\gamma$ satisfies \eqref{eq:eta}, and solve \eqref{eq:H_ode} for $t\in[0,m\tau_d]$. Then, we get the following bound
\begin{align*}
    |&H_i(t)| \\
&\leq e^{-t}|H_i(0)| \\
&\quad+ e^{-t} \left|\int_{0}^te^z\sum_j W_{ij}\left(\Phi\big(G_j(z-\tau_d)+B_j(z)\big)-\Phi\big(\tilde G_j(z-\tau_d)+B_j(z)\big)\right) dz\right|\\
&\quad+ e^{-t}\left|\int_{0}^te^z\sum_j\big(W_{ij}-\tilde W_{ij}\big) \Phi\big(\tilde G_j(z-\tau_d)+B_j(z)\big) dz\right| \\
&\leq e^{-t}|H_i(0)| + e^{-t} \sum_j |W_{ij}|\int_{\beta^j_{\gamma}}e^z dz + e^{-t}\sum_j\big|W_{ij}-\tilde W_{ij}\big| \int_0^te^z dz\\
&\leq (\|s_j^0\|_{\ell^1}+n)\big\|\WW-\tilde \WW\big\|_{\max} + \|\WW\|_{\max}\sum_j  |\beta_{\gamma}^j(m\tau_d)| <\varepsilon,
\end{align*}
whenever $\|\WW-\tilde\WW\|_{\max}<\delta_2$. Taking the maximum with respect to $i$ completes the proof of continuity.
\end{proof}

Equipped with the above lemma we now show that the forward operator $F$ in \eqref{def:forwardOperator} is (locally) continuous under the assumption $\lim_{\gamma\rightarrow\infty}|\beta_\gamma^i(T)|=0$ for $i=1,2,\ldots,n$.
\begin{theorem}
Under the assumption in Lemma \ref{lem:Q_cont}, the forward operator $F$ in \eqref{def:forwardOperator} is continuous at $\WW$. 
\end{theorem}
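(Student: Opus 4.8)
The plan is to derive this as a short corollary of Lemma~\ref{lem:Q_cont} and Remark~\ref{remark:beta}, after rewriting the firing set of each neuron as a super-level set of $G_i$. First I would record that, by Definition~\ref{def:firing}, neuron $i$ fires at a time $t\in(0,T]$ precisely when $G_i(t-\tau_d)+B_i(t)\geq 0$, so that (up to a set of Lebesgue measure zero, which by the discussion after \eqref{eq:firing_interval} affects neither \eqref{eq:motherODE_fire} nor the metric $d$)
\[
\bigcup_{k=1}^{K(i)} I_i^k \;=\; A_i:=\{\,t\in(0,T]\ :\ G_i(t-\tau_d)+B_i(t)\geq 0\,\},
\]
and likewise $\bigcup_k\tilde I_i^k=\tilde A_i:=\{t\in(0,T]:\tilde G_i(t-\tau_d)+B_i(t)\geq 0\}$ for a perturbed matrix $\tilde\WW$, where $G_i,\tilde G_i$ solve \eqref{eq:G-ODE} for $\WW,\tilde\WW$. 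Since the metric $d$ in \eqref{def:symdiff_metric} is built from the symmetric-difference measure \eqref{def:general_sym_diff}, bounding $d(F(\WW),F(\tilde\WW))$ reduces to bounding $|A_i\triangle\tilde A_i|$ uniformly in $i$.

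Next I would fix $\varepsilon>0$ and choose scales. Using the threshold hypothesis $\lim_{\gamma\to\infty}|\beta_\gamma^i(T)|=0$, pick $\gamma\in\mathbb N$ so large that $|\beta_\gamma^i(T)|<\varepsilon$ for every $i=1,\dots,n$. Then apply Lemma~\ref{lem:Q_cont}: continuity of $Q_T$ at $\WW$ furnishes a $\delta>0$ such that $\|\WW-\tilde\WW\|_{\max}<\delta$ implies $\|G_i-\tilde G_i\|_{L^\infty([-\tau_d,T])}\leq\gamma^{-1}$ for all $i$, hence in particular $|G_i(t-\tau_d)-\tilde G_i(t-\tau_d)|\leq\gamma^{-1}$ for all $t\in[0,T]$, which is exactly condition \eqref{eq:remark_G} in Remark~\ref{remark:beta}.

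The third step is the sign-stability observation of Remark~\ref{remark:beta}: for $t\in[0,T]\setminus\beta_\gamma^i(T)$ one has $|G_i(t-\tau_d)+B_i(t)|>\gamma^{-1}$, so a perturbation of size at most $\gamma^{-1}$ cannot flip the sign of $G_i(t-\tau_d)+B_i(t)$ nor make it zero; thus $G_i(t-\tau_d)+B_i(t)\geq0\iff\tilde G_i(t-\tau_d)+B_i(t)\geq0$ for such $t$, i.e.\ $t\in A_i\iff t\in\tilde A_i$. Consequently $A_i\triangle\tilde A_i\subseteq\beta_\gamma^i(T)$ up to measure zero, so $d_\triangle(A_i,\tilde A_i)\leq|\beta_\gamma^i(T)|<\varepsilon$; taking the supremum over $i$ yields $d(F(\WW),F(\tilde\WW))\leq\sup_i|\beta_\gamma^i(T)|<\varepsilon$ whenever $\|\WW-\tilde\WW\|_{\max}<\delta$, which is continuity of $F$ at $\WW$.

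I expect the only delicate points to be bookkeeping rather than genuine analysis: carefully absorbing the measure-zero ambiguities (zero-length intervals $\Delta t_i^k=0$, and the exact-zero locus $\beta^i(T)\subseteq\beta_\gamma^i(T)$ where both sides may disagree), and confirming that the perturbed super-level set $\tilde A_i$ is again an admissible element of $\mathcal I$ so that $F(\tilde\WW)$ is defined — though since $d$ only registers the symmetric-difference \emph{measure}, the precise interval structure of $\tilde A_i$ is immaterial. The real analytic content is already packaged in Lemma~\ref{lem:Q_cont}, and the present statement is essentially its combination with Remark~\ref{remark:beta}.
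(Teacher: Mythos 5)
Your proposal is correct and follows essentially the same route as the paper: rewrite each firing set as the super-level set $\{t: G_i(t-\tau_d)+B_i(t)\geq 0\}$, invoke Lemma~\ref{lem:Q_cont} to force $\|G_i-\tilde G_i\|_\infty\leq\gamma^{-1}$, and use the sign-stability of Remark~\ref{remark:beta} to trap $A_i\triangle\tilde A_i$ inside $\beta_\gamma^i(T)$, whose measure is controlled by the threshold assumption. The only difference is cosmetic: you are slightly more explicit about the measure-zero bookkeeping, which the paper leaves implicit.
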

\begin{proof}
    Let $\{I_i^k\}$ and $\{\tilde{I}_i^k\}$ be the firing intervals due to the connectivities $\WW$ and $\tilde{\WW}$, respectively, i.e., $\{I_i^k\} = F(\WW)$ and $\{\tilde{I}_i^k\}=F(\tilde{\WW})$. Here, $\tilde{\WW}$ is a perturbation of $\WW$. Furthermore, denote $A_i= \bigcup_k I_i^k$ and $\tilde A_i = \bigcup_k \tilde I_i^k$. Observe that if $t\in A_i\triangle \tilde{A}_i$, then $t$ is either in $A_i$ or in $\tilde{A}_i$, but not in both. In addition, from \eqref{eq:mother_inequalities} we have that $t\in \bigcup_k I_i^k$ if and only if $G_i(t-\tau_d)+B_i(t) \geq 0$, and similarly for $t\in \bigcup_k \tilde I_i^k$. Thus, using the metric defined in \eqref{def:symdiff_metric}, based on \eqref{def:general_sym_diff}, we can conclude that
    \begin{align*}
        d(&\{I_i^k\},\{\tilde{I}_i^k\}) \\
        &= \sup_i \left|(A_i \setminus \tilde{A}_i) \cup  (\tilde{A}_i \setminus A_i)\right| \\
        &=\sup_i\left|\left\{t\in[0,T] \mid \sgn\big(G_i(t-\tau_d)+B_i(t)\big) \neq \sgn\big(\tilde{G}_i(t-\tau_d)+B_i(t)\big)\right\}\right|.
    \end{align*}
    
    Let $\varepsilon>0$. From the assumption $\lim_{\gamma\rightarrow\infty}|\beta_\gamma^i|=0$, we have that there exists $\eta\in\mathbb N$ such that $|\beta_\gamma^i(T)|<\eps$ for all $i\in\{1,2,\ldots,n\}$ for $\gamma>\eta$.
    Furthermore, from Lemma \ref{lem:Q_cont} we know that $G_i$ is continuous at $\WW$. So, for $\gamma>\eta$ there exists $\delta>0$ such that 
    \begin{equation*}
        \|\WW-\tilde \WW\|_{\max}<\delta\implies\|G_i-\tilde G_i\|_\infty <\frac{1}{\gamma}, \quad \forall i\in\{1,2,\ldots,n\}.
    \end{equation*}
    Thus, $\tilde G_i$ satisfies \eqref{eq:remark_G} in Remark \ref{remark:beta} and we have that $G_i(t-\tau_d)+B_i(t)$ and $\tilde G_i(t-\tau_d)+B_i(t)$ can only have opposite signs if $t\in\beta_\gamma^i(T)$. This implies that 
    \begin{equation*}
        \left\{t\in[0,T] \mid \sgn\big(G_i(t-\tau_d)+B_i(t)\big) \neq \sgn\big(\tilde{G}_i(t-\tau_d)+B_i(t)\big)\right\}\subseteq \beta_\gamma^i(T),
    \end{equation*}
    and consequently $\left|(A_i\setminus\tilde A_i)\cup (\tilde A_i\setminus A_i)\right|\leq |\beta_\gamma^i(T)|<\varepsilon$ when $\gamma>\eta$. 
    So, taking the supremum with respect to $i$, we conclude that $F$ is continuous at $\WW$.
\end{proof}
\begin{remark}
    If we scale the connectivities $\left[ W_{ij} \right]$ with $n^{-1}$, which is necessary for the limit $n\rightarrow\infty$ in \eqref{eq:motherODE} to be well-defined, the proof of continuity of the forward operator will be uniform in the number of neurons $n$. 
\end{remark}
The local continuity of the forward operator $F$ is proven under the assumption that $\beta^i_\gamma(T)\rightarrow0$ as $\gamma\rightarrow\infty$ for $i=1,2,\ldots,n$, see \eqref{eq:def-beta}. 
Below we investigate criteria on the external inputs $\{ B_i \}$ for this to be the case. The next lemma states sufficient conditions for $\{ B_i \}$. The main idea of the proof rests on two observations. First, if the argument of $\Phi$ in \eqref{eq:motherODE} crosses 0 at most a finite number of times, $\beta^i(T)$ in \eqref{eq:def-beta} has to be of zero measure. Second, looking at \eqref{eq:G-ODE}, if there is an interval where no neurons change from firing to not firing or vice versa, $G_i(t)$ will be of the form $c_1e^{-t}+c_2$. This is reflected in the two assumptions on $\{B_i\}$ stated in the lemma.
\begin{lemma}\label{lem:assumptionsB}  Assume that for all $i\in\{1,2,\ldots,n\}$, $B_i(t)$ is a function which satisfies:
\begin{itemize}[leftmargin=6.7em]
    \item[Assumption 1.] The function $\Phi\left(c_1 e^{-(t-\tau_d)}+c_2 + B_i(t)\right)$ has a finite number of jump discontinuities on $[0,T]$.
    \item[Assumption 2.] $\left|\{t\in [0,T] \mid c_1e^{-(t-\tau_d)}+c_2+B_i(t) = 0\}\right|=0$. 
\end{itemize}
Then $ \lim_{\gamma \to \infty} |\beta_{\gamma}^i(T)| = |\beta^i(T)|=0$ for all $i$.
\end{lemma}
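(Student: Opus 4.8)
The plan is to first reduce the statement to the single assertion $|\beta^i(T)|=0$, and then to obtain this from the fact that $G_i$ is, on $[-\tau_d,T]$, a finite concatenation of pieces of the form $c_1e^{-t}+c_2$, with Assumption~1 controlling the number of pieces and Assumption~2 controlling each piece separately. For the reduction I would note that the sets $\beta^i_\gamma(T)$, $\gamma\in\mathbb N$, are measurable (since $G_i$ is continuous and $B_i\in L^\infty$), are decreasing in $\gamma$, and satisfy $\bigcap_\gamma\beta^i_\gamma(T)=\{t\in[0,T]\mid G_i(t-\tau_d)+B_i(t)=0\}=\beta^i(T)$ with $|\beta^i_1(T)|\le T<\infty$. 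Continuity of Lebesgue measure from above then gives $\lim_{\gamma\to\infty}|\beta^i_\gamma(T)|=|\beta^i(T)|$, so it suffices to prove $|\beta^i(T)|=0$; this delivers both equalities in the conclusion at once.

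Next I would establish the structural claim by an induction over the delay windows $[(m-1)\tau_d,m\tau_d]$, using \eqref{eq:G-ODE}. In the base window, on $[-\tau_d,0]$ one has $G_j(t)=e^{-t}\sum_k W_{jk}s_k^0$, i.e. $c_1=\sum_k W_{jk}s_k^0$ and $c_2=0$, which lie in the hypothesized intervals; hence on $[0,\tau_d]$ the firing state of neuron $j$ equals $\Phi\big(c_1e^{-(t-\tau_d)}+c_2+B_j(t)\big)$, which by Assumption~1 has finitely many jumps. On each of the finitely many subintervals of $[0,\tau_d]$ where the whole configuration $\big(\Phi(G_j(t-\tau_d)+B_j(t))\big)_j$ is constant, the right-hand side of \eqref{eq:G-ODE-diff} is a fixed partial sum of $\{W_{ij}\}_j$, hence a constant in $[W^-,W^+]$, and solving the linear ODE shows $G_i=c_1e^{-t}+c_2$ there, with $c_2\in[W^-,W^+]$ and — using $G_i(0)=\sum_jW_{ij}s_j^0$ together with $s_j^0e^{-t}\le s_j(t)\le s_j^0e^{-t}+1$ — with $c_1$ in the interval of the hypothesis. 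Iterating window by window, each $G_j$ restricted to $[0,m\tau_d]$ remains a finite concatenation of such exponential pieces, so on $[m\tau_d,(m+1)\tau_d]$ the firing state of neuron $j$ is, piece by piece, of the form $\Phi(c_1e^{-(t-\tau_d)}+c_2+B_j(t))$ with admissible constants, and Assumption~1 bounds its jumps on each piece; finitely many pieces give finitely many configuration changes on the window, and finitely many windows cover $[0,T]$.

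To conclude, let $-\tau_d=\xi_{-1}<0=\xi_0<\xi_1<\cdots<\xi_L=T$ be a partition refining all firing-configuration change times, so that each $G_i$ equals $c_1^{(l)}e^{-t}+c_2^{(l)}$ on $[\xi_{l-1},\xi_l]$ with admissible constants. For $t\in[0,T]$, the point $t-\tau_d$ lies in exactly one $[\xi_{l-1},\xi_l]$, and there $G_i(t-\tau_d)+B_i(t)=c_1^{(l)}e^{-(t-\tau_d)}+c_2^{(l)}+B_i(t)$. Hence $\beta^i(T)$ is contained in the finite union over $l$ of the sets $\{t\in[0,T]\mid c_1^{(l)}e^{-(t-\tau_d)}+c_2^{(l)}+B_i(t)=0\}$, each of which is Lebesgue-null by Assumption~2. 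Therefore $|\beta^i(T)|=0$, which together with the first paragraph finishes the proof.

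The hard part will be the bootstrapping in the second paragraph: propagating the ``finitely many firing changes'' property forward through the delay $\tau_d$ while tracking the exponential-piece constants, and verifying they stay within the ranges for which Assumptions~1 and~2 were imposed. That bookkeeping — keeping $c_2$ in $[W^-,W^+]$ and $c_1$ in $[\sum_j W_{ij}s_j^0-W^+,\ \sum_j W_{ij}s_j^0-W^-]$ as the configuration switches — is precisely what the particular choice of these intervals is designed to accommodate, and it is the only place where any delicacy enters.
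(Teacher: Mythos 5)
Your proposal is correct and follows the paper's own proof essentially step for step: the reduction to $|\beta^i(T)|=0$ via continuity of Lebesgue measure from above is the paper's Step~4, the window-by-window induction showing each $G_i$ is a finite concatenation of pieces $c_1e^{-t}+c_2$ (with Assumption~1 controlling the number of configuration changes) is its Steps~1--2, and the covering of $\beta^i(T)$ by finitely many Assumption-2 null sets is its Step~3. The bookkeeping you single out as the delicate point --- that the piecewise constants remain in the hypothesized ranges for $c_1$ and $c_2$ --- is handled no more explicitly in the paper, which simply reads it off from the a priori bounds $W^-+\bigl(\sum_j W_{ij}s_j^0-W^-\bigr)e^{-t}\leq G_i(t)\leq W^+ +\bigl(\sum_j W_{ij}s_j^0-W^+\bigr)e^{-t}$.
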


\begin{proof}
The proof is divided into three steps. In Step 1 we establish by induction that $G_i(t)$ is indeed piecewise of the form $c_1 e^{-t} + c_2$. The last two steps consist of showing that $|\beta^i(T)|=0$ and $|\beta^i(T)| = \lim_{\gamma\rightarrow\infty} |\beta_\gamma^i(T)|$. 

\noindent{\bf Step 1:}
We show that $G_i(t)$ is piecewise of the form $c_1 e^{-t}+c_2$ for $t\in[-\tau_d,m\tau_d]$, $m=0,1,2,\ldots,T/\tau_d$ by an induction argument. As before, the argument still holds true if $T/\tau_d\notin \mathbb N$ by a minor adjustment.

The exact statement of the induction hypothesis is: for all $i=1,\ldots,n$, $G_i(t)$ is piecewise of the form $c_1 e^{-t}+c_2$ for $t\in[-\tau_d,m\tau_d]$ with a finite number of points of non-differentiability. Note that whenever any of the Heavisides in \eqref{eq:G-ODE} switch between 0 and 1, we have a point of non-differentiability.

Assume that the hypothesis is true up to $m$. Then, there exists constants $a_i$ and $b_i$ such that
\begin{equation*}
    G_i(t-\tau_d) = a_i e^{-(t-\tau_d)} + b_i, \quad t\in[m\tau_d,(m+1)\tau_d],
\end{equation*} 
between two points where $G_i(t)$ is non-differentiable. From Assumption 1 we then have that 
$\Phi(G_i(t-\tau_d)+B_i(t))$ has a finite number of jump discontinuities on $[m\tau_d,(m+1)\tau_d]$. Thus, by Definition \ref{def:firing} every neuron fires a finite number of times in that interval. 
So, we can rewrite \eqref{eq:G-ODE} as 
\begin{equation}\label{eq:G-ODE-finite}
    \frac{d G_i}{dt} + G_i = \sum_{j=1}^n W_{ij} \sum_{k=1}^{K(j)} \mathcal{X}_{I_j^k}(t),\quad t \in [m\tau_d,(m+1)\tau_d],
\end{equation}
where $I_j^k$ are the firing intervals for neuron $j$.
The solution $G_i(t)$ of \eqref{eq:G-ODE-finite} on $t\in[m\tau_d,(m+1)\tau_d]$ is
\begin{align}\label{eq:solG}
\begin{split}
    G_i(t) &= G_i(m\tau_d)e^{m\tau_d-t} \\
    &\qquad + \sum_j W_{ij}\sum_k\left[\Phi(t-t_j^k-\Delta t_j^k)(e^{t_j^k+\Delta t_j^k}-e^{t_j^k})-\mathcal{X}_{I_j^k}(t)e^{t_j^k}\right]e^{-t} \\
    &\quad+ \sum_j W_{ij}\sum_k\mathcal{X}_{I_j^k}(t) \\
    &:= C_1^{i}(t)e^{-t} + C_2^i(t).
\end{split}
\end{align}
Let $\mathcal{T} =\{t\in [m\tau_d,(m+1)\tau_d] \mid t=t_j^k \text{ or } t=t_j^k+\Delta t_j^k\}$ denote the set of all times in $[m\tau_d,(m+1)\tau_d]$ that mark either the start or the end of a firing interval in the network.
Then for all $t\in \mathcal{T}^c$, $C_1^i(t)$ and $C_2^i(t)$ are constant.
Consequently, $G_i(t)$ continues to be piecewise of the form $c_1 e^{-t}+c_2$ with a finite number of points of non-differentiability on $[m\tau_d,(m+1)\tau_d]$, and thus so is the case for $G_i(t)$ on the whole interval $[-\tau_d,(m+1)\tau_d]$. 

In the base case $m=0$, $t\in[-\tau_d,0]$, and from \eqref{eq:G-ODE-init} we have that $G_i(t) =\sum_j W_{ij} s_j^0e^{-t}$. Clearly, we can write $G_i(t)=c_1 e^{-t}+c_2$, with $c_1 = \sum_j W_{ij} s_j^0$ and $c_2=0$. 
\\
{\bf Step 2:}
Let $d_i^l$, $l=1,\ldots,L(i)$ be the points of non-differentiability for $G_i(t-\tau_d)$. From Step 1 we have that for each $(d_i^l,d_i^{l+1})$, $l=1,\ldots L(i)-1$, there exists $c_1$ and $c_2$ such that $G_i(t-\tau_d) = c_1e^{-t}+c_2$. In addition, we observe that
\begin{equation*}
    \{t\in(d_i^l,d_i^{l+1}) \mid c_1 e^{-t}+c_2+B_i(t) = 0\}\subset \{t\in[0,T] \mid c_1 e^{-t}+c_2+B_i(t) = 0\}.
\end{equation*}
Thus, from Assumption 2 we can conclude that 
\begin{equation*}
    |\{t\in(d_i^l,d_i^{l+1}) \mid c_1 e^{-t}+c_2+B_i(t) = 0\}| = 0,
\end{equation*}
and consequently $\beta^i(T)$, defined in \eqref{eq:def-beta}, is a finite union of sets with measure zero, and therefore, $|\beta^i(T)|=0$.\\
{\bf Step 3:}
We observe that $\beta_{\gamma}^i(T) \supseteq \beta_{\gamma+1}^i(T)$ for each $\gamma$ and $|\beta_{1}^i(T)|\leq T$. Then by continuity from above \cite[Proposition 14]{RoydenH} we have 
\begin{equation*}
    \left|\beta^i(T)\right| = \left|\bigcap_{\gamma=1}^\infty \beta_\gamma^i(T)\right| = \lim_{\gamma\rightarrow\infty}\left|\beta_\gamma^i(T)\right|.
\end{equation*}

Combining Step 2 and 3, we finally get that 
\begin{equation*}
    \lim_{\gamma\rightarrow\infty} |\beta_\gamma^i(T)| = |\beta^i(T)| = 0.
\end{equation*}
\end{proof}
\begin{remark}
    By solving \eqref{eq:G-ODE} and using that $0\leq \Phi(G_j(t-\tau_d) + B_j(t))\leq 1$, we get
    \begin{align*}
         W^- + \left(\sum_{j} W_{ij} s_j^0 - W^- \right)e^{-t}  \leq G_i(t) \leq W^+ + \left(\sum_{j} W_{ij} s_j^0 - W^+ \right)e^{-t}.
    \end{align*}
    It is an open problem whether this can be used to refine the restriction on the functions $\{B_i\}$ in Lemma \ref{lem:assumptionsB}.
\end{remark}

Which class of functions that satisfy the assumptions in Lemma \ref{lem:assumptionsB} remains unclear. However, in the subsequent corollary, we demonstrate that if $B_i$ is an extended piecewise analytic function satisfying $B_i\notin \textnormal{span} \{1,e^{-t}\}$ on any interval, the assumptions hold. We define an extended piecewise analytic function as follows.  

\begin{definition}[Extended piecewise analytic function]\label{def:extended_pw_analytic}
    A function $f$ is called \emph{extended piecewise analytic} on $[0,T]$, if for each open interval $Q_l = (a_l,b_l)$, in a set of a finite number of disjoint open intervals $Q_1, Q_2,\ldots,Q_m$ satisfying $[0,T] = \cup_l \overline{Q_l}$, there exists an associated function $f_l$ which is analytic on $(a_l-\eps_l, b_l+\eps_l)$ for some $\eps_l>0$ and satisfies $f|_{Q_l} = f_l|_{Q_l}$.
\end{definition}

\begin{corollary}\label{cor:assumptionsB} For all $i\in\{1,2,\ldots,n\}$, assume that $B_i$ is an extended piecewise analytic function on $[0,T]$ and that $B_i\notin \textnormal{span} \{1,e^{-t}\}$ on any interval.
Then $\lim_{\gamma \to \infty} |\beta_{\gamma}^i(T)| = |\beta^i(T)|=0$ for all $i$.
\end{corollary}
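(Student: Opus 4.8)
The plan is to verify that an extended piecewise analytic $B_i$ with $B_i \notin \textnormal{span}\{1, e^{-t}\}$ on any interval satisfies Assumptions 1 and 2 of Lemma~\ref{lem:assumptionsB} for every admissible pair $(c_1, c_2)$, after which the conclusion is immediate from that lemma. The key point is that the function $g(t) := c_1 e^{-(t-\tau_d)} + c_2 + B_i(t)$ is itself extended piecewise analytic on $[0,T]$: on each of the finitely many intervals $Q_l$ of the partition for $B_i$, the function $c_1 e^{-(t-\tau_d)} + c_2$ is (globally) analytic, so it restricts to an analytic function on a slightly enlarged interval $(a_l - \eps_l, b_l + \eps_l)$, and the sum of two functions analytic there is analytic there. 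Thus $g$ has an associated analytic representative $g_l$ on each enlarged interval.

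First I would handle Assumption 2. On each $Q_l$, either $g_l \equiv 0$ on $(a_l - \eps_l, b_l + \eps_l)$ or $g_l$ has only isolated zeros there (by the identity theorem for real-analytic functions, since a nonzero analytic function on a connected interval has no accumulation point of zeros). The first case would force $B_i(t) = -c_1 e^{-(t-\tau_d)} - c_2 = -c_1 e^{\tau_d} e^{-t} - c_2 \in \textnormal{span}\{1, e^{-t}\}$ on $Q_l$, contradicting the hypothesis; so only finitely many zeros occur on each $\overline{Q_l}$, hence finitely many on $[0,T]$, giving $|\{t \in [0,T] : g(t) = 0\}| = 0$, which is Assumption 2.

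For Assumption 1, I would argue that $\Phi(g(t))$ can only have a jump discontinuity at a point $t_0$ where $g$ changes sign or where $g(t_0) = 0$ while $g$ is nonzero on one side, plus possibly at the finitely many partition breakpoints $a_l, b_l$. Away from the breakpoints, $g$ agrees with an analytic function, and between consecutive zeros (of which, by Step for Assumption 2, there are finitely many) $g$ has constant sign, so $\Phi(g)$ is locally constant except at those finitely many zeros and finitely many breakpoints. Hence $\Phi(g)$ has only finitely many jump discontinuities on $[0,T]$, which is Assumption 1. One small care point: at a breakpoint two analytic pieces meet and $\Phi$ could jump there, but there are only finitely many such points, so this does not affect finiteness.

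The main obstacle — really the only subtlety — is making the zero-counting argument airtight: I must use that $c_1 e^{-(t-\tau_d)} + c_2$ lies in $\textnormal{span}\{1, e^{-t}\}$ (which it does, since $e^{-(t-\tau_d)} = e^{\tau_d} e^{-t}$), so that "$g_l \equiv 0$ on an interval" is exactly equivalent to "$B_i \in \textnormal{span}\{1, e^{-t}\}$ on that interval," and then invoke the identity theorem on the slightly enlarged open interval where $g_l$ is genuinely analytic rather than merely on the closed $Q_l$. Everything else is bookkeeping: collecting the finitely many bad points from the finitely many $Q_l$, and then quoting Lemma~\ref{lem:assumptionsB} verbatim to conclude $\lim_{\gamma \to \infty} |\beta_\gamma^i(T)| = |\beta^i(T)| = 0$.
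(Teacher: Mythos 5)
Your proposal is correct and follows essentially the same route as the paper: show that on each analytic piece $Q_l$ the function $c_1e^{-(t-\tau_d)}+c_2+B_i(t)$ extends analytically, use the identity theorem (equivalently, isolated zeros of a nonzero real-analytic function) together with the hypothesis $B_i\notin\textnormal{span}\{1,e^{-t}\}$ to rule out infinitely many zeros, and deduce Assumptions 1 and 2 of Lemma \ref{lem:assumptionsB}. Your explicit treatment of the partition breakpoints and of the absorption of $e^{\tau_d}$ into the coefficient of $e^{-t}$ matches the paper's (slightly more implicit) handling of the same points.
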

\begin{proof}
    By Definition \ref{def:extended_pw_analytic} we have that there exists $Q_l=(a_l,b_l)$ and $f_l$, $l=1,2,\ldots,m$ such that $B_i|_{Q_l}=f_l|_{Q_l}$.
    Suppose that $c_1e^{-t}+c_2+B_i(t)$ has infinitely many zeros $\{z_j\}$ in $Q_l$. Consequently, $c_1e^{-t}+c_2+f_l(t)$ has infinitely many zeros $\{z_j\}$ in $(a_l-\varepsilon_l,b_l+\varepsilon_l)$. The Bolzano-Weierstrass theorem then implies that $\{z_j\}$ has a cluster point in $(a_l-\varepsilon_l,b_l+\varepsilon_l)$. Furthermore, using the identity theorem for analytic functions we can conclude that $c_1e^{-t}+c_2 +f_l(t) \equiv 0$ on $(a_l-\varepsilon_l,b_l+\varepsilon_l)$. This implies that $c_1e^{-t}+c_2+B_i(t)\equiv 0$ on $(a_l,b_l)$, i.e., $B_i|_{Q_l}\in \textnormal{span}\{e^{-t},1\}$, which is a contradiction. Thus, $c_1 e^{-t}+ c_2 + B_i(t)$ has a finite number of zeros, and consequently $\Phi(c_1 e^{-t}+ c_2 + B_i(t))$ has a finite number of jump discontinuities on $(a_l,b_l)$.
    
    Thus, we can conclude that $\{t\in [0,T] \mid c_1e^{-t}+c_2+B_i(t) = 0\}$ consists of a finite number of isolated points and consequently $$|\{t\in [0,T] \mid c_1e^{-t}+c_2+B_i(t) = 0\}| = 0.$$ 
    So, the assumptions in Lemma \ref{lem:assumptionsB} are satisfied, and we can conclude that $\lim_{\gamma \to \infty} |\beta_{\gamma}^i(T)| = |\beta^i(T)|=0$ for all $i$.
\end{proof}
In Corollary \ref{cor:assumptionsB} we assumed that $B_i$ cannot be constant on any interval. However, as $B_i$ could be piecewise constant in real life experiments, we consider this special case on its own.
\begin{lemma}
    Assume that the range of $B_i$ equals $\{0,c_i\}$, where $c_i$ is a constant, that $B_i$ has a finite number of jump discontinuities, and that
    \begin{equation}\label{eq:c_assumption}
        c_i \neq -\sum_j W_{ij} v_j \textnormal{ for all binary vectors } \mathsf{v}=(v_1,v_2,\ldots,v_n).
    \end{equation}
    Then $\lim_{\gamma \to \infty} |\beta_{\gamma}^i(T)| = |\beta^i(T)|=0$ for all $i$.
\end{lemma}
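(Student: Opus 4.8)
The plan is to verify the two assumptions of Lemma~\ref{lem:assumptionsB} for the piecewise-constant external input, reusing as much of the machinery already developed as possible. Fix $i$ and let $c_1 \in \left[\sum_j W_{ij}s_j^0 - W^+, \sum_j W_{ij}s_j^0 - W^-\right]$ and $c_2 \in [W^-, W^+]$ be arbitrary. By hypothesis $B_i$ takes only the two values $0$ and $c_i$ and has finitely many jump discontinuities, so $[0,T]$ splits into finitely many subintervals on each of which $B_i$ is constant (equal to $0$ or to $c_i$). It therefore suffices to analyze the function $t \mapsto c_1 e^{-(t-\tau_d)} + c_2 + B_i(t)$ on one such subinterval, where it equals $c_1 e^{-(t-\tau_d)} + \tilde c$ with $\tilde c \in \{c_2, c_2 + c_i\}$ a constant.

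For Assumption~2, I would show the zero set on each such subinterval is finite: the function $t \mapsto c_1 e^{-(t-\tau_d)} + \tilde c$ is either identically zero (if $c_1 = 0$ and $\tilde c = 0$) or strictly monotone (if $c_1 \neq 0$), so it has at most one zero — \emph{unless} it vanishes identically. The identically-zero case forces $c_1 = 0$ and $\tilde c = 0$; since $\tilde c \in \{c_2, c_2+c_i\}$, this means either $c_2 = 0$ or $c_2 = -c_i$. Now $c_2 \in [W^-, W^+]$, and — as in Step~1 of the proof of Lemma~\ref{lem:assumptionsB}, where $G_i(t) = c_1 e^{-t} + c_2$ arises with $c_2 = \sum_j W_{ij}\mathcal{X}_{I_j^k}(t)$ evaluated on a constant piece — the relevant values of $c_2$ are in fact of the form $\sum_j W_{ij} v_j$ for a binary vector $\mathbf v$. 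Assumption~\eqref{eq:c_assumption} says $c_i \neq -\sum_j W_{ij} v_j$ for every binary $\mathbf v$, which rules out $c_2 = -c_i$; and taking $\mathbf v = \mathbf 0$ in \eqref{eq:c_assumption} gives $c_i \neq 0$, so $c_2 = 0$ would require $\tilde c = c_2 = 0$ only with $c_i \neq 0$, but then on pieces where $B_i = c_i$ we have $\tilde c = c_i \neq 0$, and on pieces where $B_i = 0$ we have $\tilde c = 0 = c_2$ — so I need to be careful and argue that the degenerate case cannot occur for the $c_2$ values that actually arise, namely $c_2 = \sum_j W_{ij} v_j$, because then $c_2 = 0$ combined with $B_i \equiv c_i \neq 0$ on a piece gives $\tilde c = c_i \neq 0$, while $c_2 = 0$ combined with $B_i \equiv 0$ gives the zero function plus a harmless measure-zero contribution only if $c_1 = 0$ too — and even then the zero set is a single subinterval of positive measure, which would violate Assumption~2. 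So the cleaner route is: invoke the structure from Step~2 of Lemma~\ref{lem:assumptionsB} to note $G_i(t) = c_1 e^{-t} + c_2$ with $c_2 = \sum_j W_{ij} v_j$ binary, hence $c_1 e^{-(t-\tau_d)} + c_2 + B_i(t) = 0$ on a set of positive measure would force $c_1 = 0$ and $c_2 + B_i \equiv 0$, i.e. $B_i \equiv -c_2 = -\sum_j W_{ij} v_j$, contradicting \eqref{eq:c_assumption} whether $B_i = 0$ (take $\mathbf v$ as is) or $B_i = c_i$ (gives $c_i = -c_2$). This establishes Assumption~2.

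For Assumption~1, on each subinterval where $B_i$ is constant the function $\Phi(c_1 e^{-(t-\tau_d)} + \tilde c)$ is the composition of $\Phi$ with a strictly monotone (or constant) function, so it has at most one jump discontinuity there; summing over the finitely many subintervals gives finitely many jumps on $[0,T]$, which is Assumption~1. Actually, since the proof of Lemma~\ref{lem:assumptionsB} only needs Assumptions~1 and~2 to hold for the $c_1, c_2$ values that genuinely appear, and we have just checked both, the conclusion $\lim_{\gamma\to\infty}|\beta_\gamma^i(T)| = |\beta^i(T)| = 0$ follows directly by applying Lemma~\ref{lem:assumptionsB}.

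The main obstacle is bookkeeping rather than深 mathematics: one must be careful that the constants $c_2$ appearing in Step~2 of Lemma~\ref{lem:assumptionsB} really do range over $\{\sum_j W_{ij} v_j : \mathbf v \text{ binary}\}$ (this is exactly why the firing-rate sum $\sum_j W_{ij}\mathcal{X}_{I_j^k}(t)$ at a fixed time takes such values, since each neuron is either firing or not), and that condition \eqref{eq:c_assumption} is precisely engineered to exclude the degenerate identically-zero case on both types of constant pieces of $B_i$ (the $B_i = 0$ pieces and the $B_i = c_i$ pieces), including the sub-case $c_i = 0$ via $\mathbf v = \mathbf 0$. Once that alignment is made explicit, the rest is a one-line appeal to Lemma~\ref{lem:assumptionsB}.
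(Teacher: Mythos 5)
Your argument is essentially the paper's: both reduce to the fact (from Step 2 of the proof of Lemma \ref{lem:assumptionsB}) that $G_i$ is piecewise of the form $c_1e^{-t}+c_2$, note that a positive-measure zero set forces $c_1=0$ and hence $G_i(\cdot-\tau_d)$ constant and equal to $-B_i$ on a subinterval, identify that constant as $\sum_j W_{ij}v_j$ for a binary vector $\mathbf{v}$ via the ODE \eqref{eq:G-ODE-diff} (or equivalently via the explicit solution formula), and then invoke \eqref{eq:c_assumption}. The one loose end in your final step --- that \eqref{eq:c_assumption} rules out $\sum_j W_{ij}v_j=-c_i$ but does not literally rule out $\sum_j W_{ij}v_j=0$ on a piece where $B_i=0$ --- is present in exactly the same form in the paper's own proof, so it is not a defect of your proposal relative to the paper.
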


\begin{proof}
    As in Step 1 in the proof of Lemma \ref{lem:assumptionsB}, we can conclude that $G_i(t)$ is piecewise of the form $c_1 e^{-t} + c_2$. Therefore, if $B_i(t)$ is a constant, $|\beta^i(T)|=|\{t\in[0,T]\mid G_i(t-\tau_d)+B_i(t)=0\}|$ can only be nonzero if $G_i(t-\tau_d)$ is constant and equal to $-B_i(t)$ on a subinterval. Thus, assume that $G_i(t-\tau_d)$ is constant on the interval $(a,b)$. This means that for $t\in(a-\tau_d,b-\tau_d)$, $dG_i/dt = 0$ and from \eqref{eq:G-ODE-diff} we get
    \begin{equation*}
        G_i(t) = \sum_j W_{ij} \Phi(G_j(t-\tau_d)+B_j(t)), \quad t\in(a-\tau_d,b-\tau_d).
    \end{equation*}
    Since $\Phi$ is the Heaviside function, the right-hand side of the above equation can be written as $\sum_j W_{ij} v_j$, where $\mathsf{v} = (v_1,v_2,\ldots,v_n)$ is a binary vector. So, from \eqref{eq:c_assumption} we can conclude that $G_i(t-\tau_d)$ will never be equal to $-B_i(t)$ and consequently $|\beta^i(T)|=0$.
\end{proof}

\section{Numerical experiments}\label{sec:num}
In this section we illustrate the potential to numerically reconstruct the connectivity matrix $\WW$ by solving the inverse problem defined in \eqref{eq:baby_linear_system} for each neuron $i$ using the algorithm presented in Section \ref{subsec:algorithm}. The firing intervals will be generated synthetically by solving the system of ODEs \eqref{eq:motherODE} using the true connectivity and given initial data $\{s_i^0\}$. The aim is to investigate the impact of noise on the inverse solution given these firing intervals.

The upcoming results are simulated using $n=20$ and $n=100$ neurons with simulation time $T=500$ and $T=2000$, respectively. The other parameters are as follows: the time step is set to $dt=1/500$, the time delay to $\tau_d=1$, and the external input to $B_i(t)=0.1$ for $i=1,\ldots,n$. The initial conditions $\{s_i^0\}$ are uniformly drawn from $(0,1)$.
 
From the derivation of the inverse problem in Section \ref{sec:method}, we observe that there are at least two possible sources for noise. 
Firstly, we note that the external input $B_i(t)$ can consist of both input from neurons outside of the network and from input from external sources. Thus, it is interesting to investigate how noise added to $\bb$ affects the inverse solution. Secondly, when constructing system \eqref{eq:baby_linear_system} we use the firing intervals. When dealing with real data, the start and end points of the firing intervals may be imprecise. This will be simulated by adding noise to the start and end points of the firing intervals. Generally, such noise will affect both $\AA$ and $\bb$. However, since $B_i(t)$ is set to be constant for $i=1,\ldots,n$, the noise added to the firing intervals will only affect $\AA$ in the upcoming examples. 

The noise is drawn from a normal distribution with zero mean and standard deviation $\psi$. The size of $\psi$ depends on whether noise is added to $\bb$ or to the firing intervals $\{I_i^k\}$. In addition, define $nl$ to be the noise level. For noise added to $\bb$ we use $\psi = \max \bb \cdot nl$ and for noise added to the firing intervals we use $\psi = \overline{\Delta t}\cdot nl$, where $\overline{\Delta t}$ is the median length of the firing intervals. We consider noise levels $nl=1\%$, $nl=5\%$ and $nl=10\%$.

Noise is added as follows in the two different cases. Denote the noise-free matrix and noise-free right-hand side in \eqref{eq:baby_linear_system} by $\AA$ and $\bb$, respectively. Let $\ww$ denote the true connectivity, i.e., $\AA\ww =\bb$. For the noisy right-hand side we use $\bb_\delta = \bb +\psi \boldsymbol\eta^{(i)}$, where the noise-vector $\boldsymbol\eta^{(i)}$ has components $\eta_k^{(i)}\sim\mathcal{N}(0,1)$, for $k=1,...,K(i)$, which are drawn independently from the standard normal distribution. 
For the firing intervals, we define $\hat I_j^k$ to be a perturbation of $I_j^k$ where noise is added to the start and end points: $\hat I_j^k = [\widehat{t_j^k},\widehat{t_j^k+\Delta t_j^k}]$, where $\widehat{t_j^k} = t_j^k +\psi \eta_j^k$ and similarly for $\widehat{t_j^k+\Delta t_j^k}$. Here, $\eta_j^k\sim\mathcal{N}(0,1)$ is drawn independently from the standard normal distribution. Note that the firing intervals where $\Delta t_j^k < \psi$ will be discarded. Finally, $\hat s_j$ denotes the solution of \eqref{eq:motherODE_fire} with the noisy firing intervals, $\{\widehat{I_j^k}\}$, and we define the noisy matrix as $\AA_\delta = [(a^{(i)}_\delta)_{kj}] = [\hat{s}_j(\widehat{t_i^k}-\tau_d)]$. 

As mentioned in Section \ref{subsec:algorithm}, we use TSVD as a regularization method. To determine the regularization parameter $\kappa$ we use Morozov's discrepancy principle \eqref{eq:Dsicrep_b} for the case where noise is added to $\bb$ and the adjusted discrepancy principle \eqref{eq:Dsicrep_fire} presented in Section \ref{subsec:error} when noise is added to the firing intervals.

\begin{figure}
    \centering
    \subfigure[Symmetric connectivity]{\includegraphics[width=0.45\linewidth]{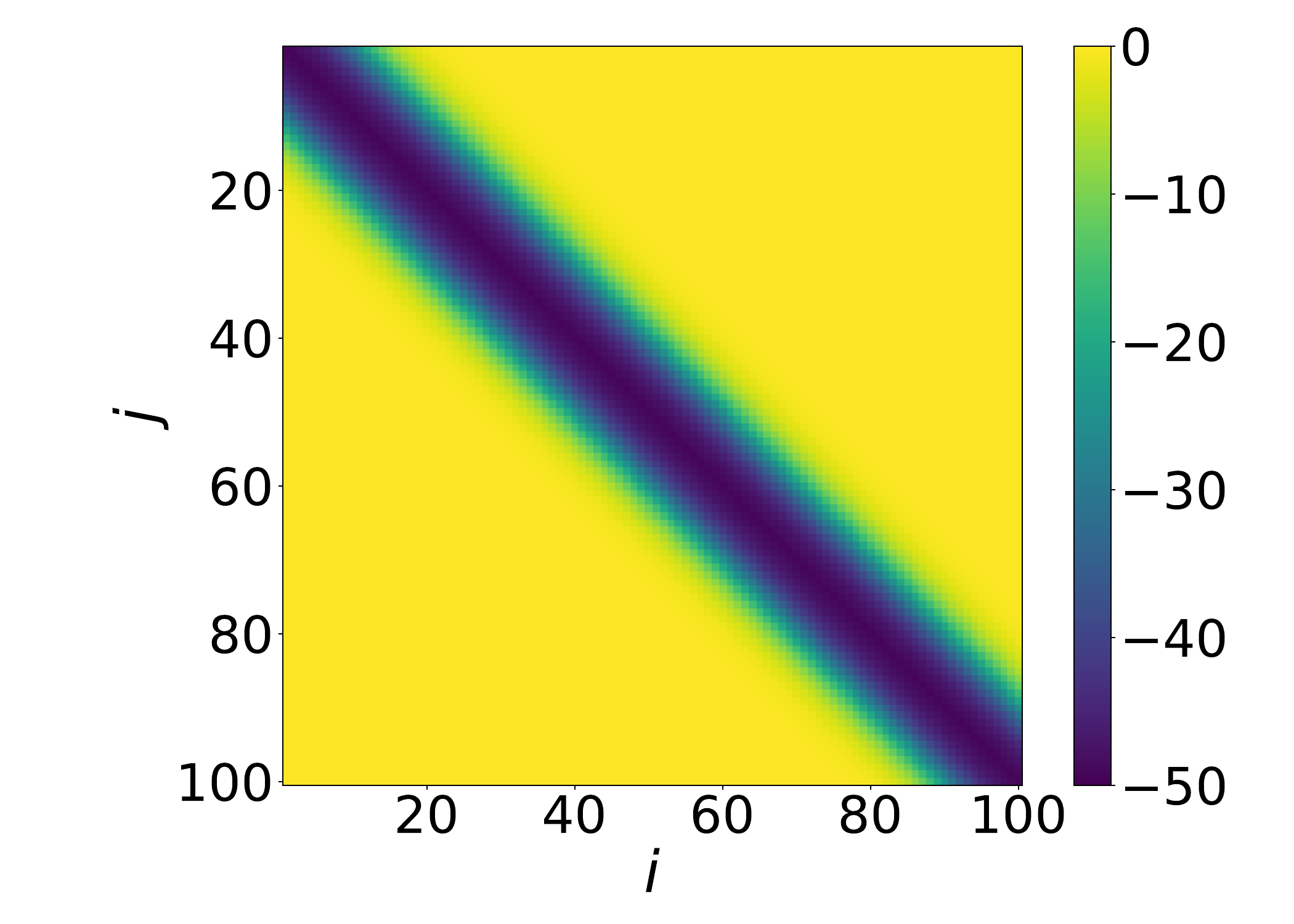}\label{fig:W_sym}}\qquad
    \subfigure[Non-symmetric connectivity]{\includegraphics[width=0.45\linewidth]{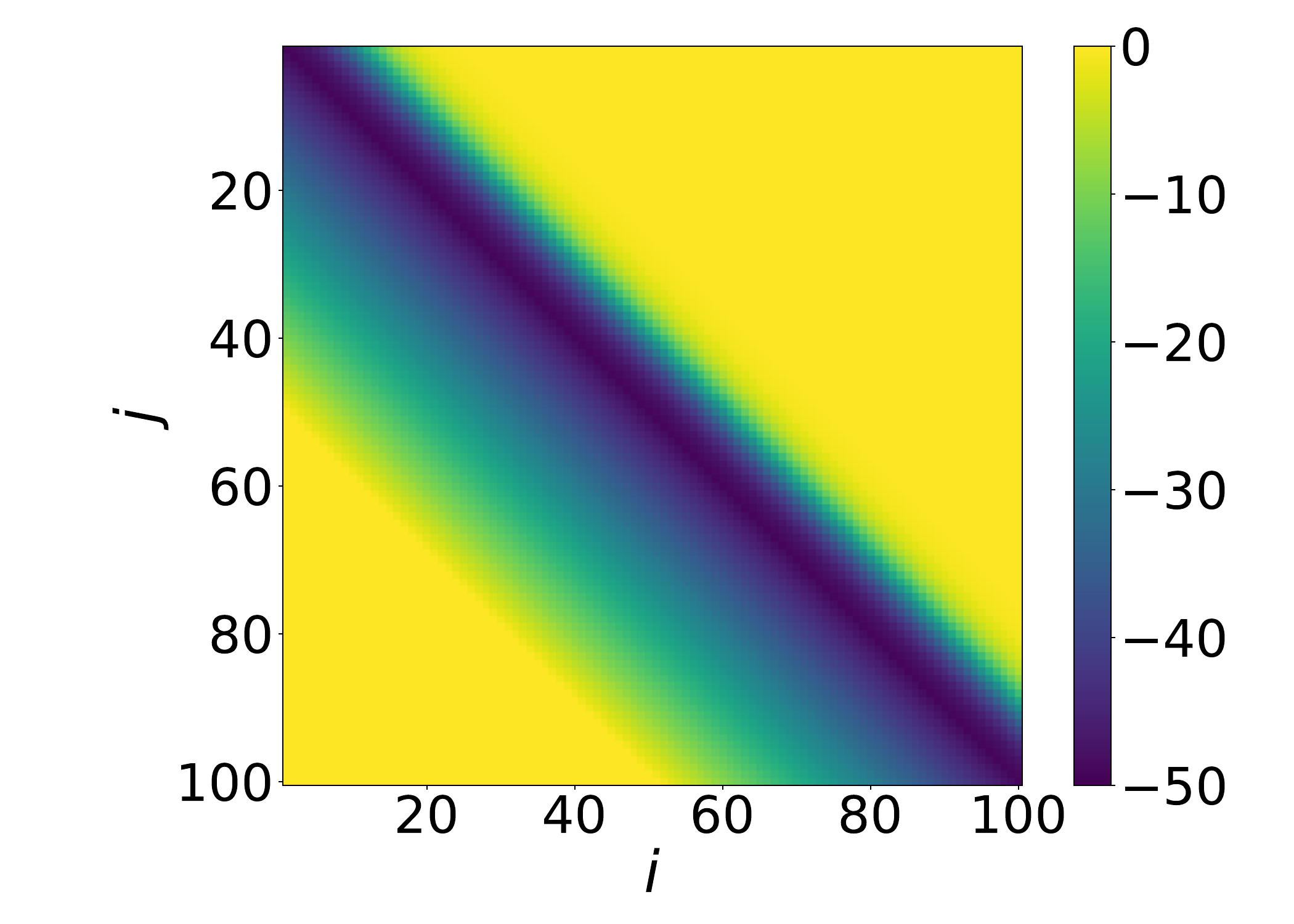}\label{fig:W_nonsym}}\\
    \caption{Samples of the true connectivity function for $n=100$.}\label{fig:W}
\end{figure}

In order to generate synthetic observation data as firing intervals, which will be used to construct the matrices $\{\AA\}$, the ODE \eqref{eq:motherODE} with the true $\WW$ is solved numerically with Euler's method. In doing so we introduce a time discretization error such that the start and end points of the firing intervals obtained are not necessarily the exact values where $\sum_j W_{ij} s_j(t-\tau_d) + B_i(t)$ intersect the $t$-axis. More precisely, the start and end points can deviate from the exact value by (at most) $dt$. This in turn introduces a discretization error when constructing $\AA$. Consequently, we avoid doing an \emph{inverse crime}. 

The condition number $K(\AA)$ of $\AA$ will typically be large because we consider an inverse problem, see Section \ref{subsec:error}. This is confirmed by Table \ref{tab:condA}, which presents the mean, maximum, and minimum values of $K(\AA)$ across various scenarios. Note that these values are quite large, particularly in the symmetric case with $n=100$ neurons. Consequently, the results we obtain for the inverse solution may not be highly accurate: the ill-posed nature of the problem yields severe error amplification that must be reduced by regularization, leading to additional modelling errors.

We will consider two different examples: one with a symmetric connectivity function in Section \ref{sec:sym} given by 
\begin{equation}\label{eq:Wsym}
    W(x,y) = -25\left(1+\tanh(2-20|x-y|)\right),
\end{equation}
and one with a non-symmetric connectivity function in Section \ref{sec:nonsym} given by
\begin{equation}\label{eq:Wnonsym}
    W(x,y) =\begin{cases}
         -25\left(1+\tanh(2-20|x-y|)\right), \quad x<y, \\
         25\left(1+\tanh{2}\right)\left(\frac{100}{49}(x-y) -1\right)\quad y \leq x < y+\frac{49}{100}, \\
         0, \quad \text{elsewhere}.
    \end{cases}
\end{equation}

In order to compare the true connectivity to the inverse solution we sample the connectivity function at $n^2$ points uniformly distributed on the square $[-0.5,0.5]^2$. For $n=100$ the samples of the symmetric connectivity function and the samples of the non-symmetric connectivity function are shown in Figure \ref{fig:W_sym} and \ref{fig:W_nonsym}, respectively. Along the $x$-axis the points are indexed by $i=1,\ldots,n$ and along the $y$-axis by $j=1,\ldots,n$. In addition, for consistency, all the numerical inverse solutions are plotted in the scale of the true connectivity.

 \begin{table}
    \centering
    \begin{tabular}{|c|c|c|c|c|c|c|}
    \hline
       & \multicolumn{3}{c|}{Symmetric}  &  \multicolumn{3}{c|}{Non-symmetric}\\
      \hline
         & Mean & Max & Min & Mean & Max & Min\\
      \hline
        $n=20$ & $2.9\cdot 10^3$ & $7.2\cdot10^3$ & $9.4\cdot10^2$ &  $1.4\cdot10^4$ & $3.5\cdot10^4$ & $3.4\cdot10^3$\\
         \hline
        $n=100$ &$4.6\cdot 10^{14}$ & $4.6\cdot10^{16}$ & $6.9\cdot10^2$&  $5.3\cdot 10^4$&  $1.1\cdot10^5$ & $2.3\cdot10^4$\\
        \hline
    \end{tabular}
    \caption{The mean, maximum and minimum condition number of $\{\AA\}$.}
    \label{tab:condA}
\end{table}

\subsection{Mildly or severely ill-posed}\label{sec:mildvssevere}
We make a small detour before presenting the examples in order to numerically investigate the degree of ill-posedness of the problem at hand. In Section \ref{sec:singular_values} we showed that the singular values $\{ \sigma_m \}$ of the matrix $\AA$ in \eqref{eq:baby_linear_system} may become small. Here we present plots of $\{ \sigma_m \}$. Recall that it is the speed of the decay of the singular values which determines whether a problem is mildly or severely ill-posed \cite{engl1996regularization}. 
\begin{figure}
    \centering
    \subfigure
    {\includegraphics[width=0.45\linewidth]{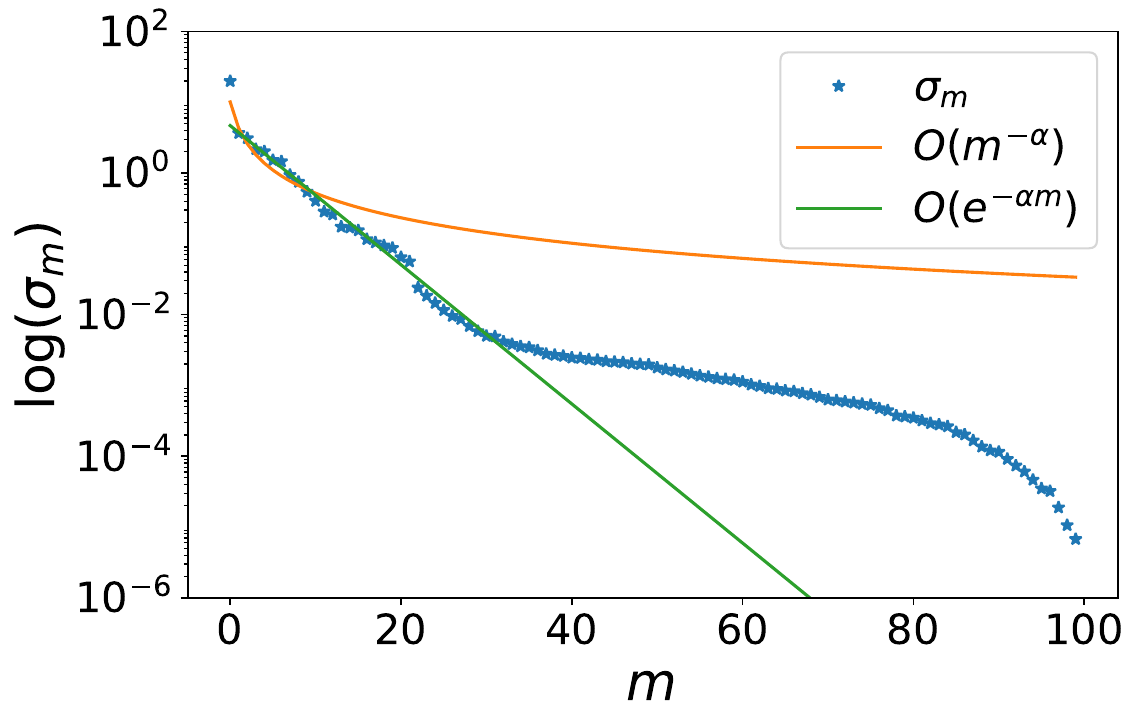}}\qquad
    \subfigure
    {\includegraphics[width=0.45\linewidth]{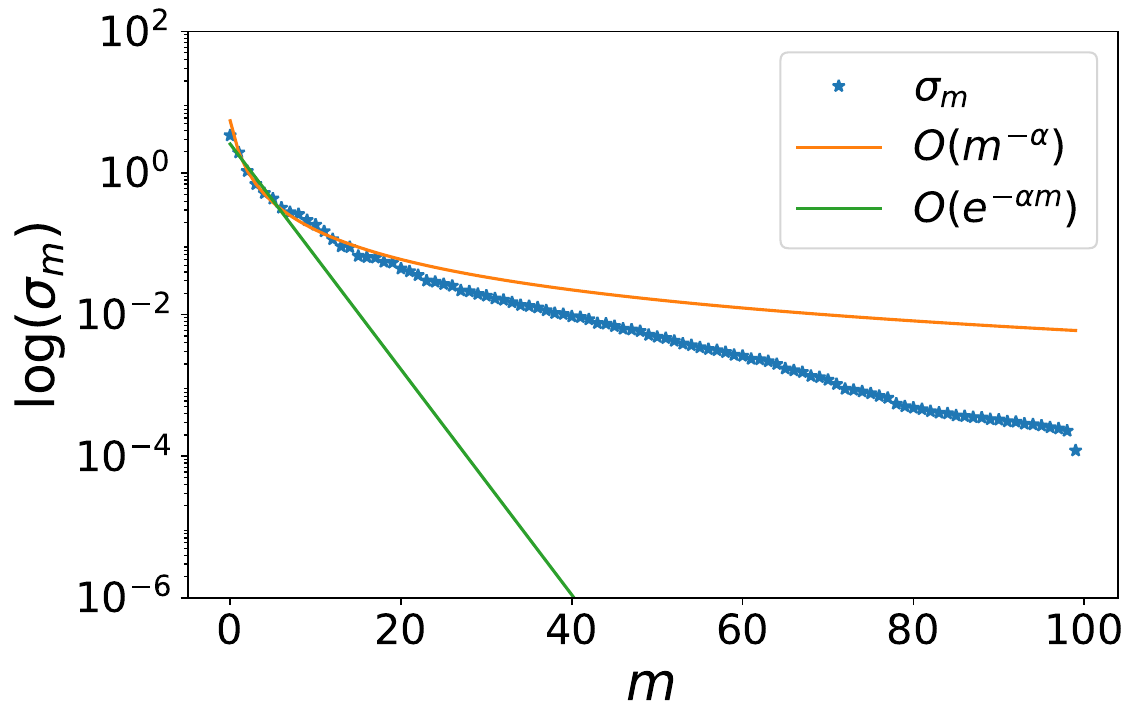}}\\
    \caption{Log-plots of the singular values of $\AA$ and their approximations $C m ^{-\alpha}$ and $C e^{-\alpha m}$ for neuron $i=3$. The left and right plots show these quantities for the symmetric and non-symmetric connectivities, respectively.}
    \label{fig:m_vs_s}
\end{figure}

In the left plot in Figure \ref{fig:m_vs_s} the singular values of $\AA$ for neuron $i=3$, in addition to the curves of $C m^{-\alpha}$ and $Ce^{-\alpha m}$ that approximate the decreasing singular values, are plotted for the symmetric connectivity \eqref{eq:Wsym}. The corresponding quantities for the non-symmetric connectivity \eqref{eq:Wnonsym} are depicted in the right plot. We note that similar plots were obtained for the other neurons as well.

It is difficult to draw any firm conclusions from Figure \ref{fig:m_vs_s}, however, we can observe that the singular values for the symmetric case decay faster than for the non-symmetric case. For the symmetric connectivity, problem \eqref{eq:baby_linear_system} seems to be severely ill-posed. On the other hand, for the non-symmetric connectivity, \eqref{eq:baby_linear_system} appears to be closer to mildly ill-posed. This suggests that we will obtain better inverse solutions with the non-symmetric connectivity than with the symmetric connectivity. 

\begin{figure}
    \centering
    \subfigure[Noise level: $1\%$. Regularization parameter: $\kappa=16.$]
    {\includegraphics[width=0.27\linewidth]{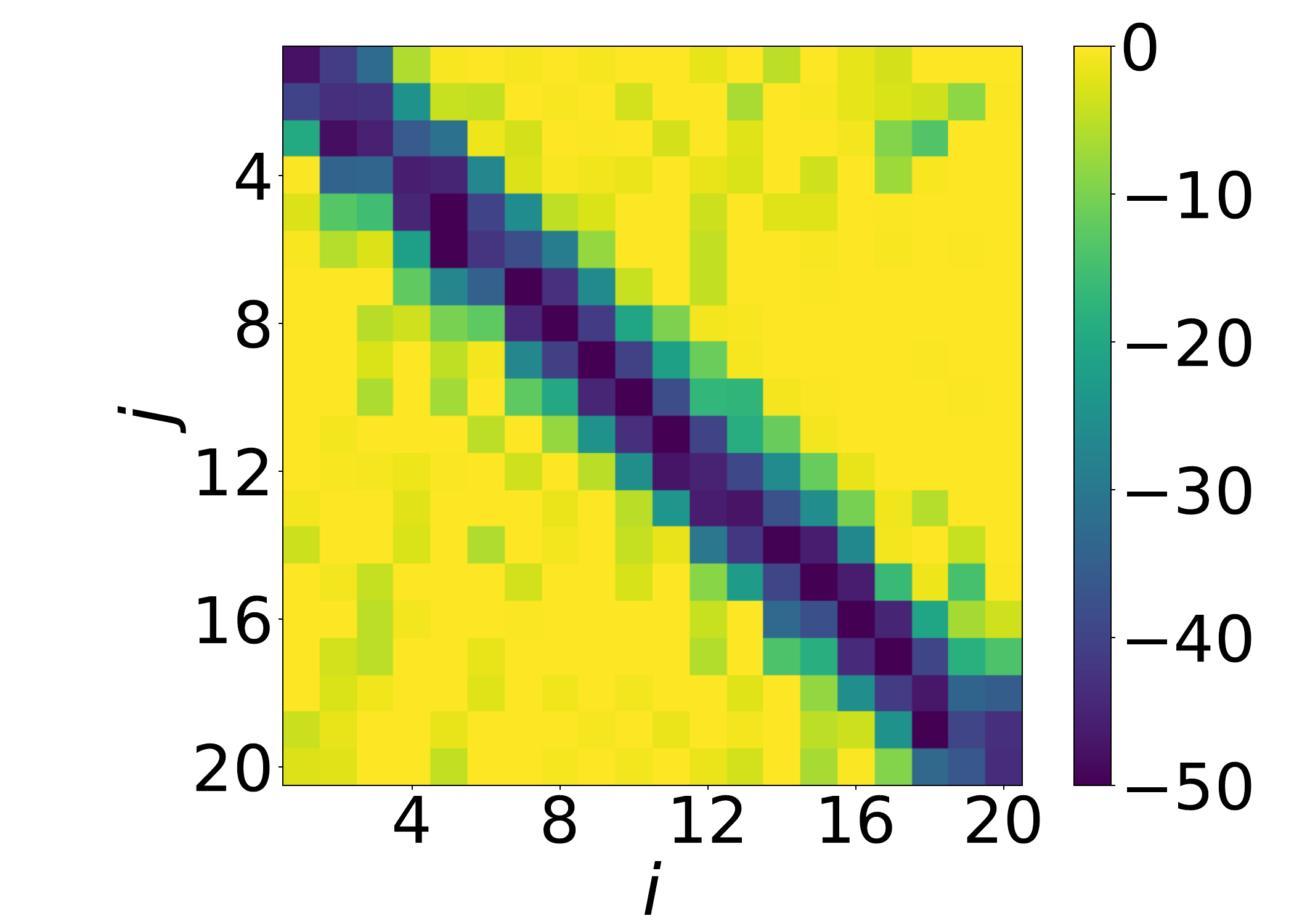}}\qquad
    \subfigure[Noise level: $5\%$. Regularization parameter: $\kappa=13$.]
    {\includegraphics[width=0.27\linewidth]{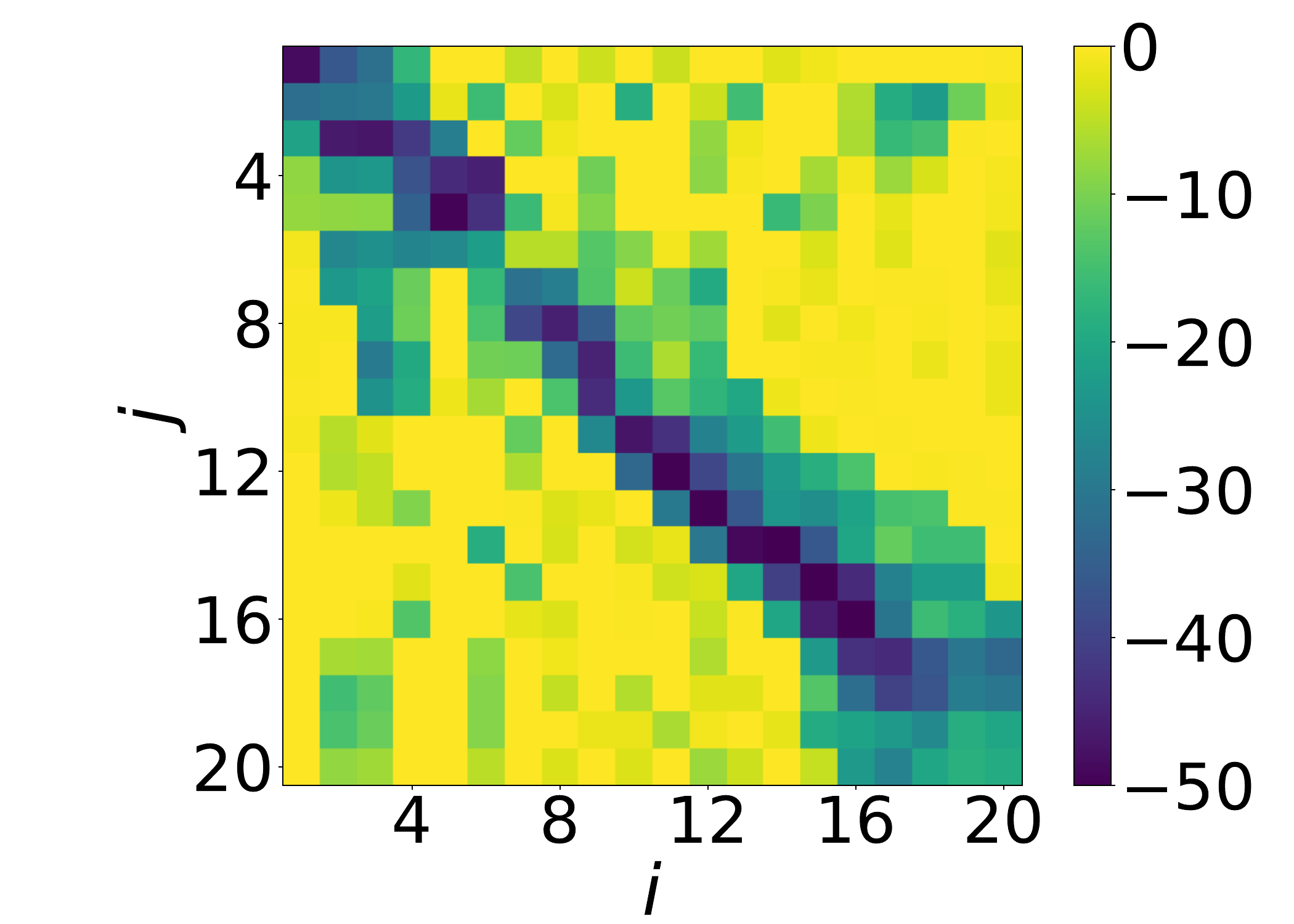}}\qquad
    \subfigure[Noise level: $10\%$. Regularization parameter: $\kappa=12$.]
    {\includegraphics[width=0.27\linewidth]{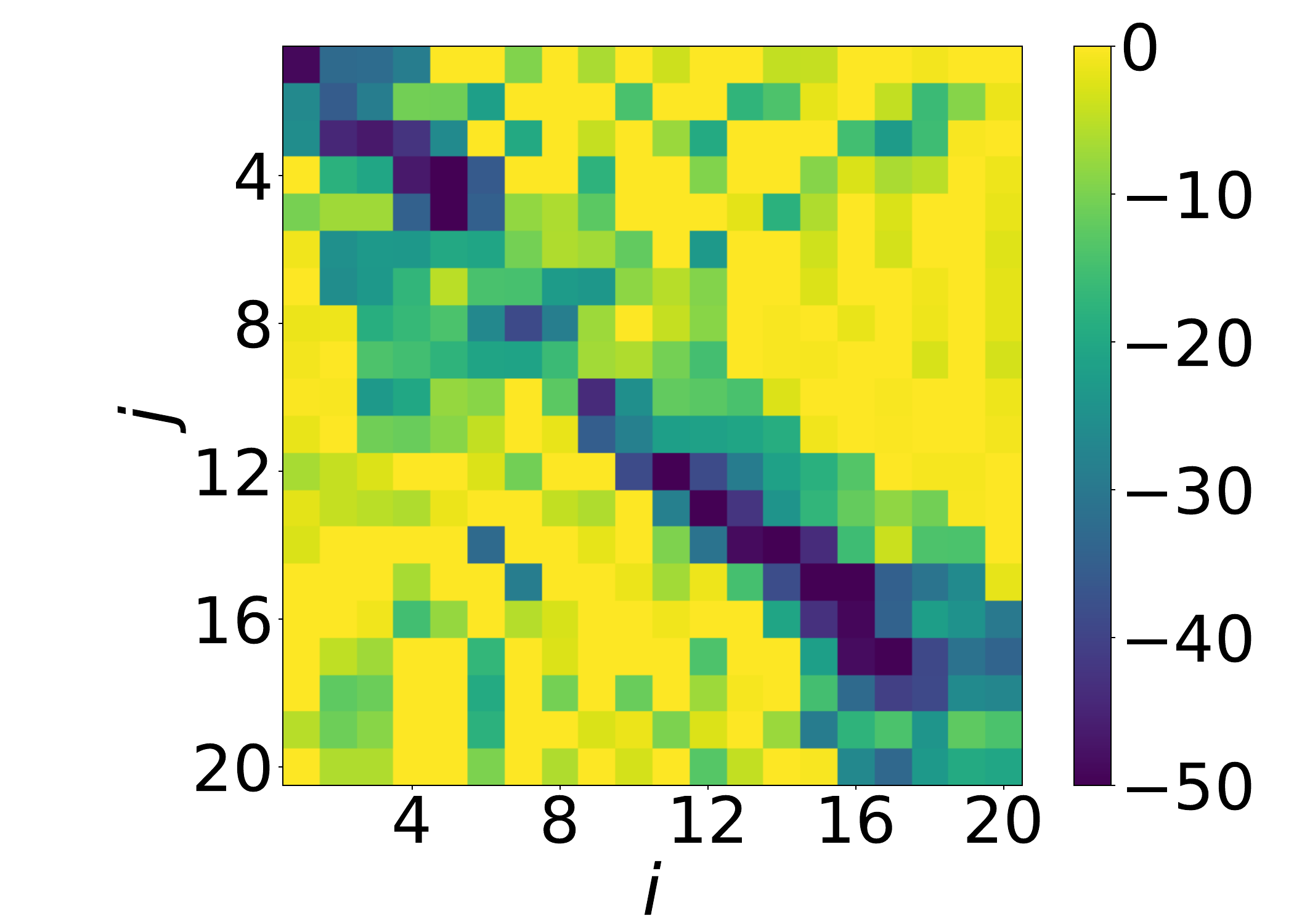}}\\
    \subfigure[Noise level: $1\%$.]{\includegraphics[width=0.27\linewidth]{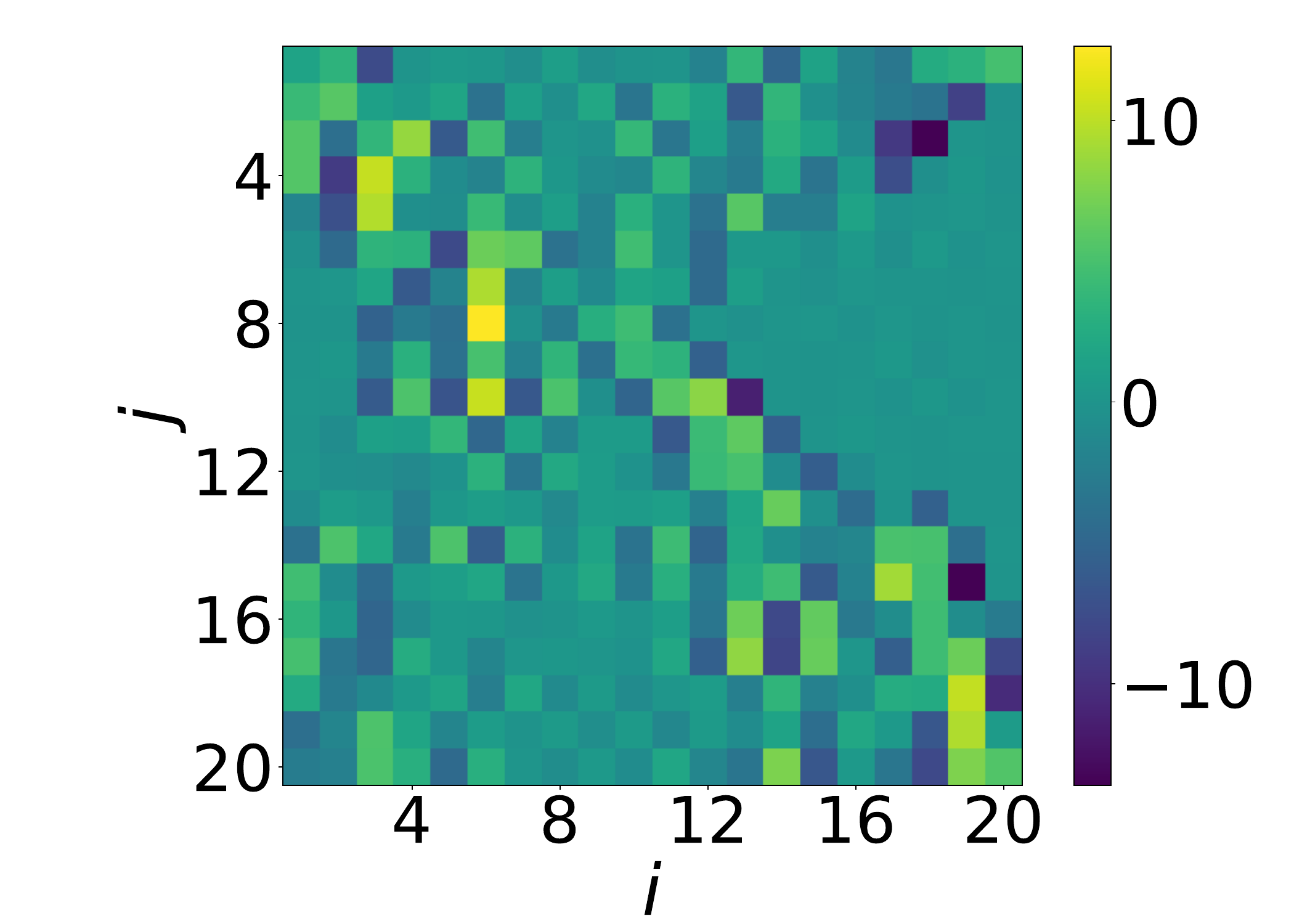}}\qquad
    \subfigure[Noise level: $5\%$.]{\includegraphics[width=0.27\linewidth]{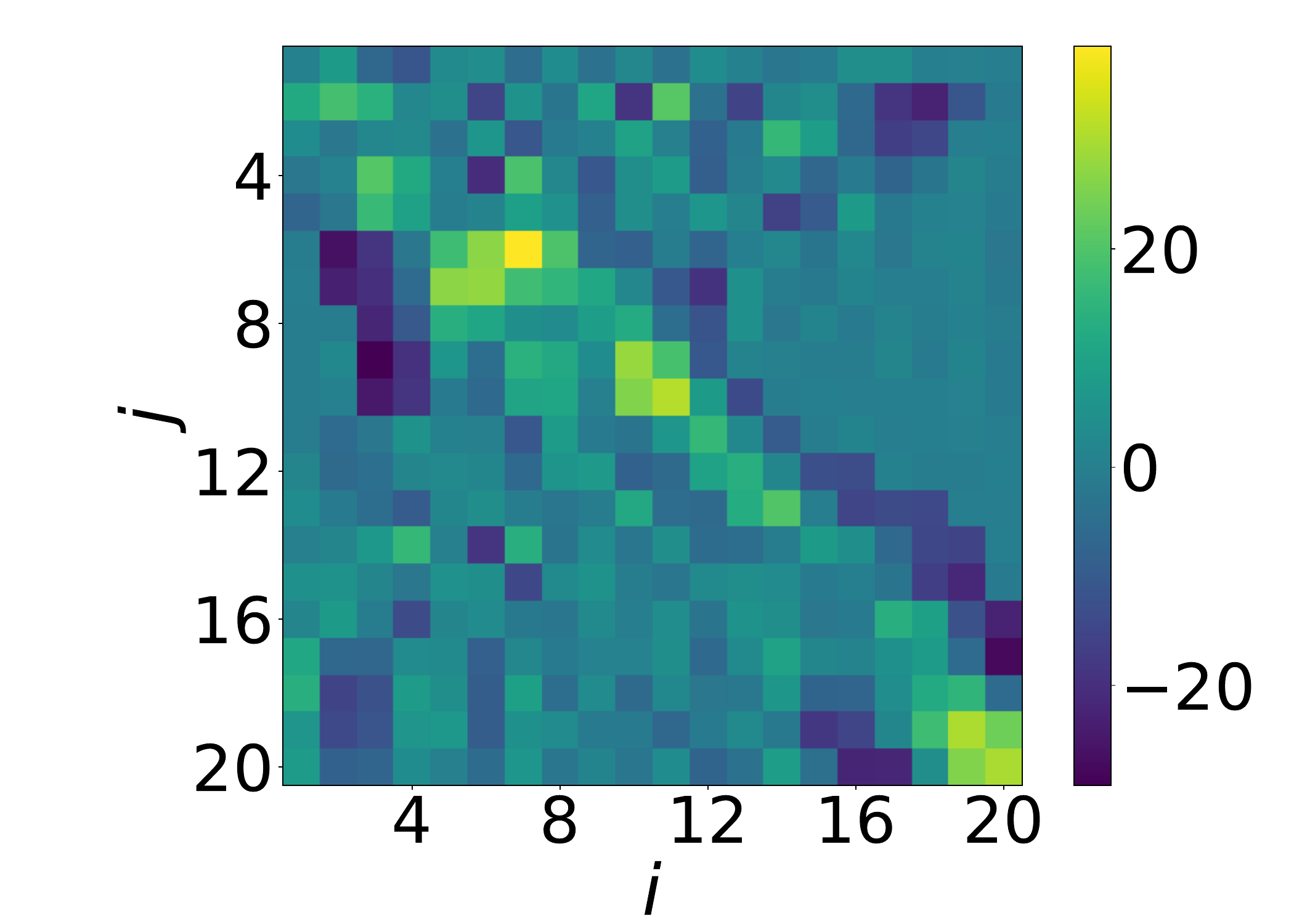}}\qquad
    \subfigure[Noise level: $10\%$.]{\includegraphics[width=0.27\linewidth]{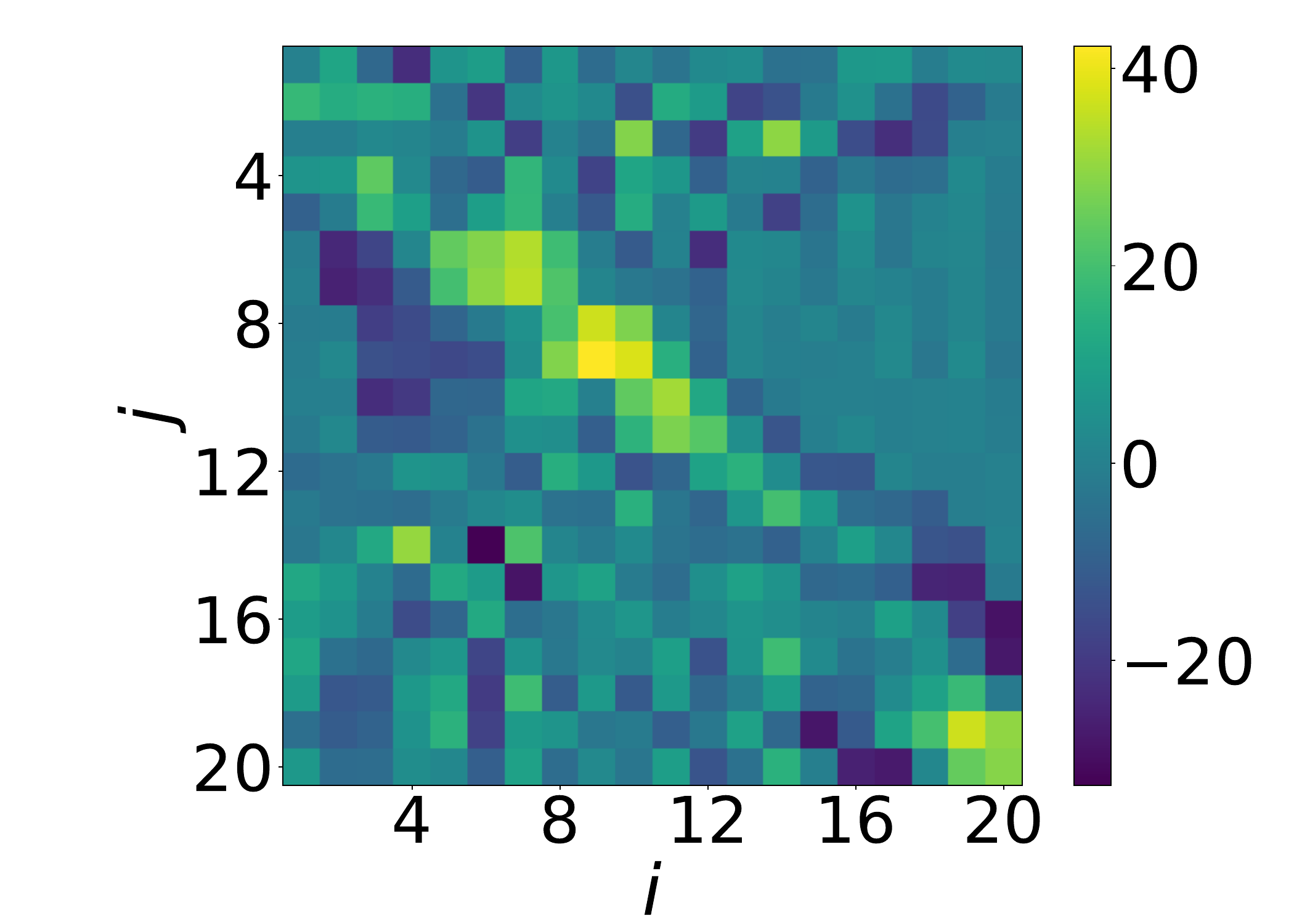}}\\
    \caption{Results for $n=20$ neurons with noise added to $\bb$ and where the true connectivity is symmetric. The upper row shows the inverse solution $\WW_{\text{inv}}^\kappa$ computed with various degree of noise. The lower row displays $\WW_{\text{inv}}^\kappa-\WW$.}
    \label{fig:nx20_noiseb_symmetric}
\end{figure}

\subsection{Symmetric connectivities}\label{sec:sym}
We now consider the inverse problem where the true connectivity is generated by the symmetric function \eqref{eq:Wsym}, see Figure \ref{fig:W_sym}. 

We first add noise to $\bb$ and explore how this affects the inverse solution $\WW^{\kappa}_{\text{inv}}$\footnote{The columns in $\WW^\kappa_{\text{inv}}$ are given by $\ww_\kappa$, see \eqref{eq:regularized_sol}.}. In Figure \ref{fig:nx20_noiseb_symmetric} the inverse solution and the difference $\WW^{\kappa}_{\text{inv}}-\WW$ is plotted for $n=20$ neurons. We observe that the inverse solution gets less accurate when we increase the noise, and, from the lower row of plots, that the magnitude of the error increases. In addition, we note that the regularization parameter $\kappa$ used to compute the inverse solutions becomes smaller when we increase the noise level. 

When increasing the number of neurons to $n=100$, we found that Morozov's discrepancy principle was unsuccessful in obtaining a reasonable choice for the regularization parameter for the $nl=1\%$ noise level. For a real world problem, it would not be possible to minimize the error in Frobenius norm, i.e., $\argmin_{\kappa} \|\WW^{\kappa}_{\text{inv}}-\WW\|_F,$ to obtain the regularization parameter $\kappa$, since we would not know the true solution $\WW$. In spite of this, we include the inverse solution calculated using this method to obtain the regularization parameter for the $1\%$ noise level, see Figure \ref{fig:nx100_noiseb_symmetric}. We present this result to demonstrate the existence of a satisfactory solution. 

\begin{figure} 
    \centering
    \subfigure{\includegraphics[width=0.27\linewidth]{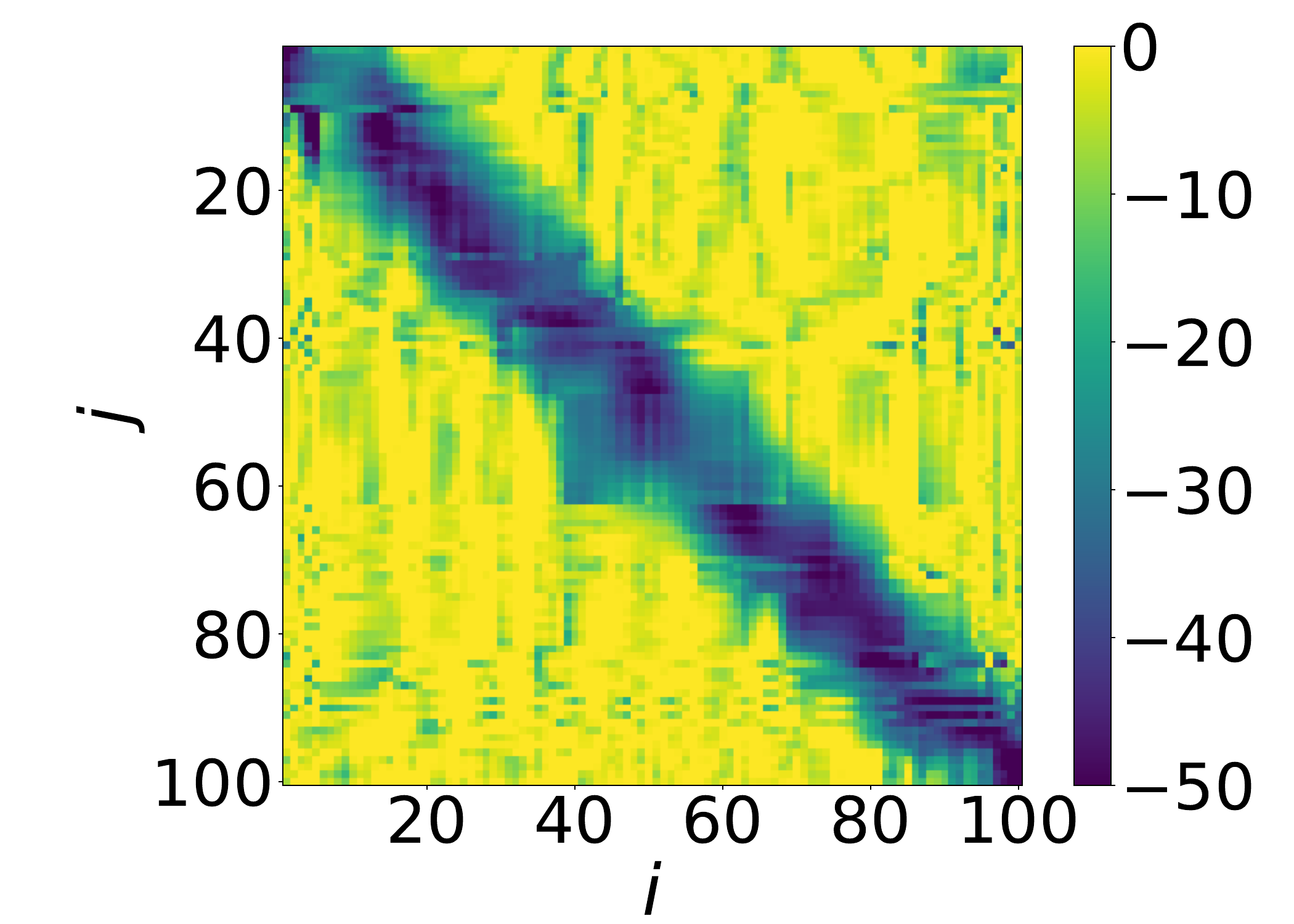}}\qquad
    \caption{The inverse solution for $n=100$ neurons with noise added to $\bb$ with a $1\%$ noise level and where the true connectivity is symmetric. The minimal Frobenius norm is used to calculate the regularization parameter, and the regularization parameter is $\kappa=22$.} 
    \label{fig:nx100_noiseb_symmetric}
\end{figure}

\begin{figure}
    \centering
    \subfigure[Noise level: $5\%$, Regularization parameter $\kappa=28$.]{\includegraphics[width=0.27\linewidth]{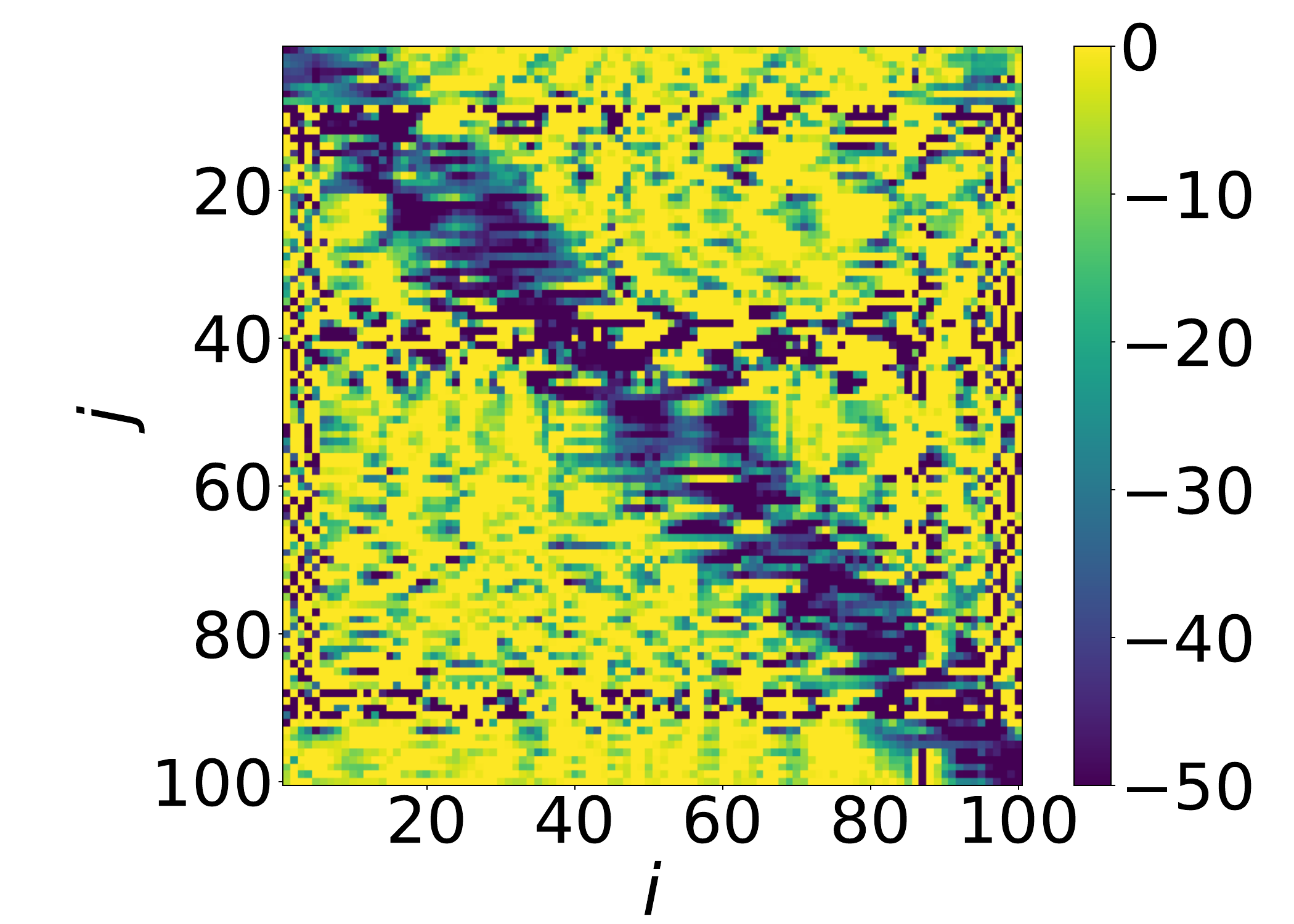}}\qquad
    \subfigure[Noise level: $10\%$, Regularization parameter $\kappa=16$.]
    {\includegraphics[width=0.27\linewidth]{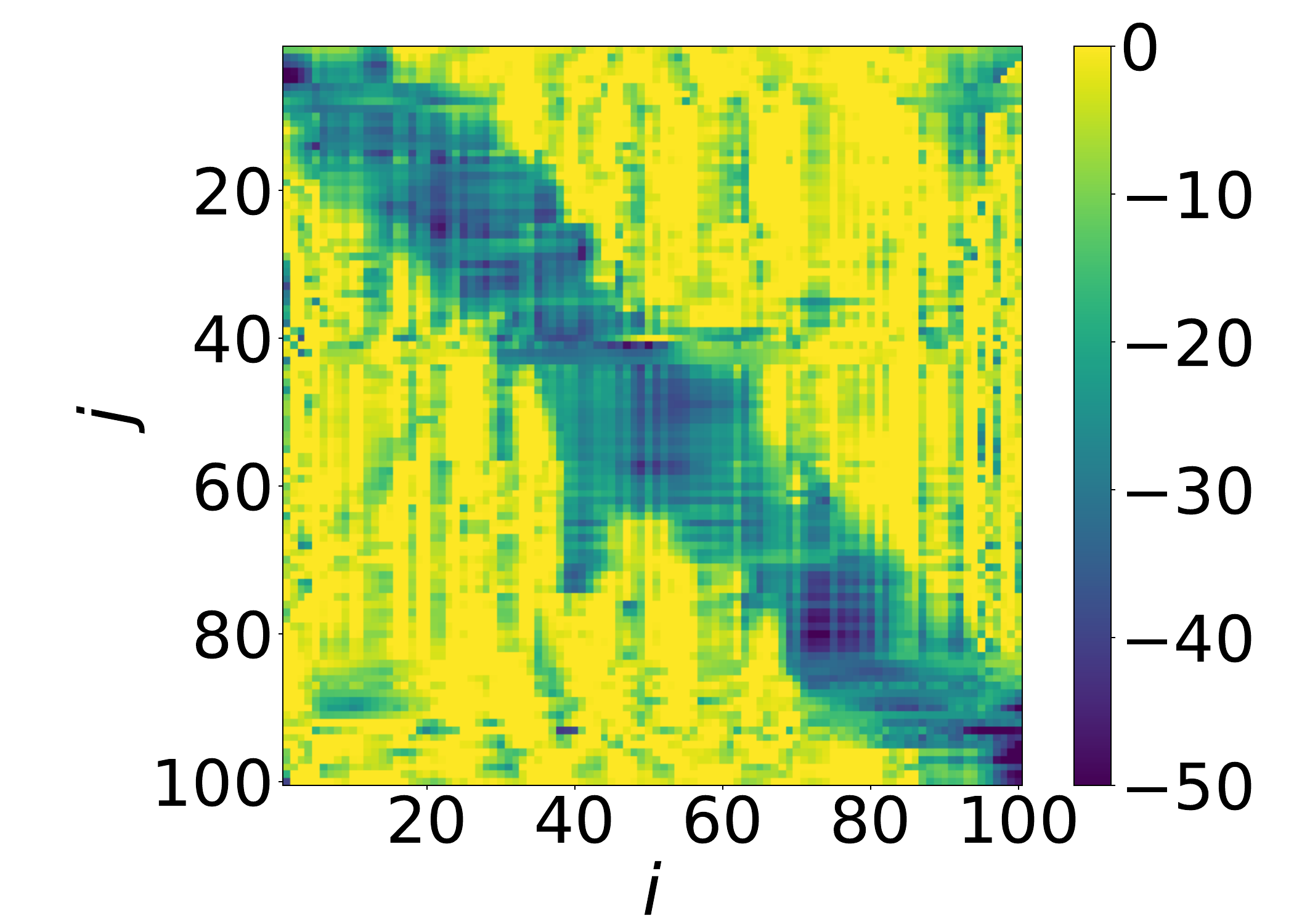}}\qquad
    \caption{The inverse solutions for $n=100$ neurons with noise added to $\bb$ and where the true connectivity is symmetric.}
    \label{fig:nx100_noiseb_symmetric_noise10}
\end{figure}

\begin{figure}
    \centering
    \subfigure[Noise level: $1\%$. Regularization parameter: $\kappa=16$.]{\includegraphics[width=0.27\linewidth]{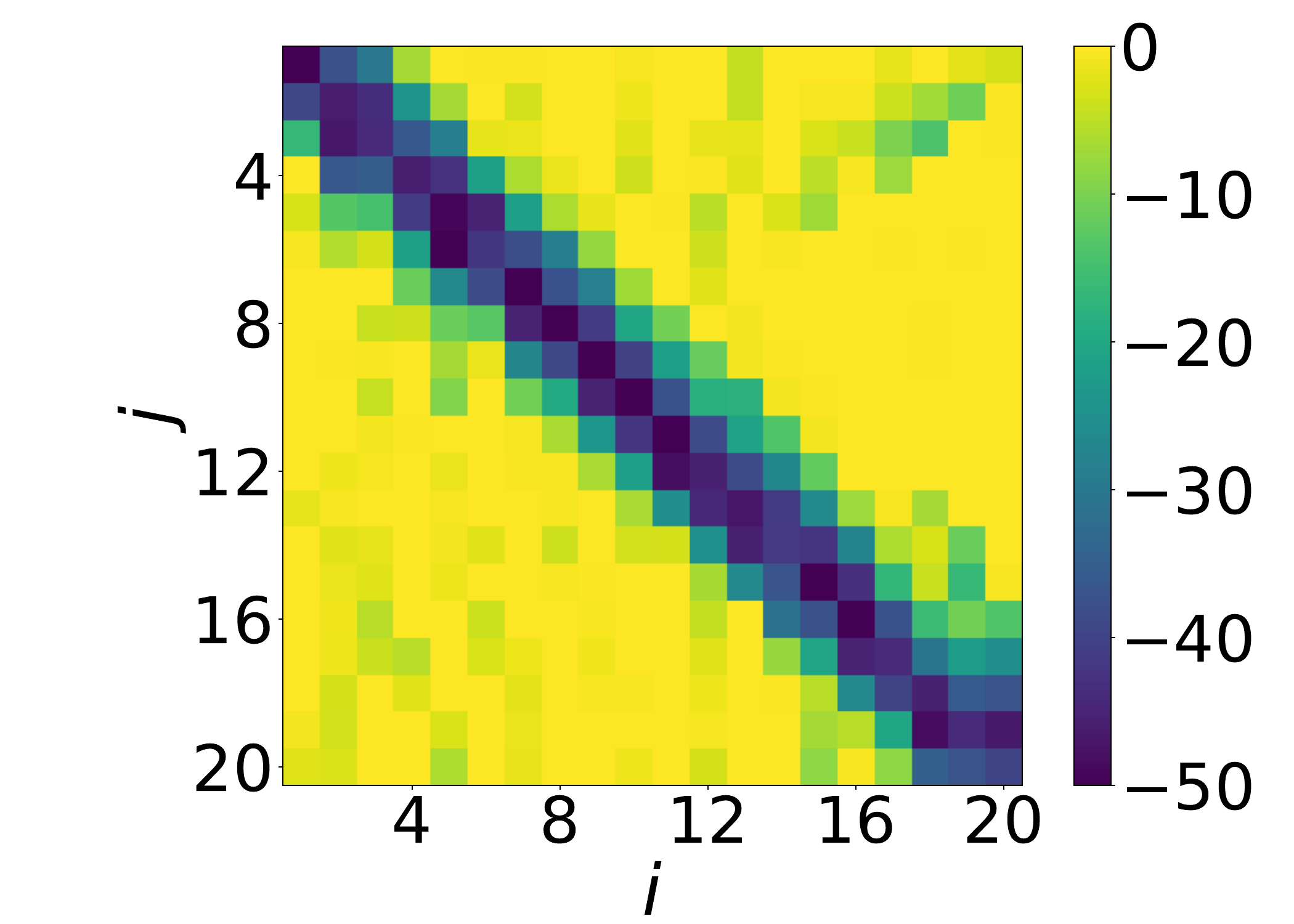}}\qquad
    \subfigure[Noise level: $5\%$. Regularization parameter: $\kappa=13$.]{\includegraphics[width=0.27\linewidth]{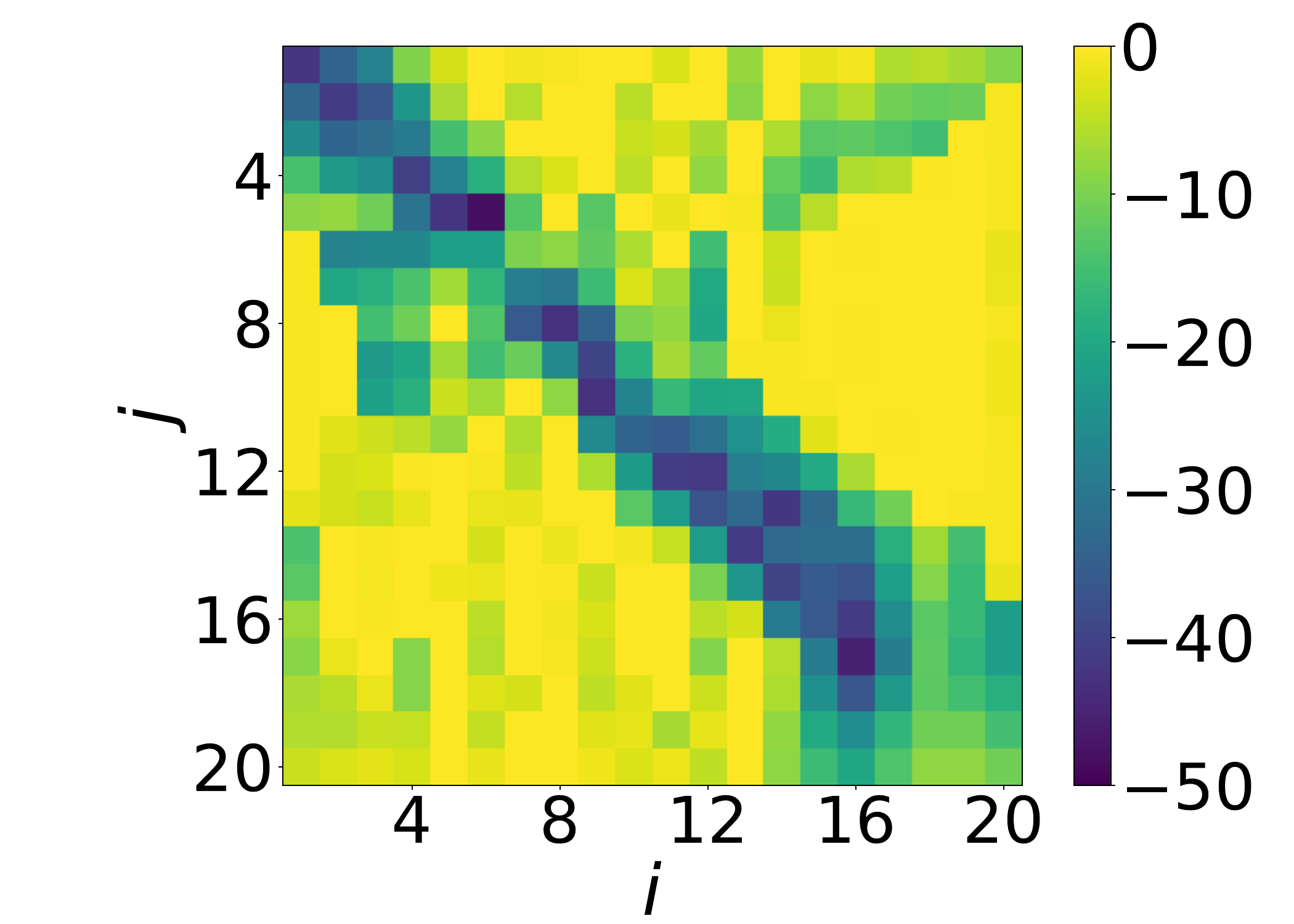}}\qquad
    \subfigure[Noise level: $10\%$. Regularization parameter: $\kappa=10$.]{\includegraphics[width=0.27\linewidth]{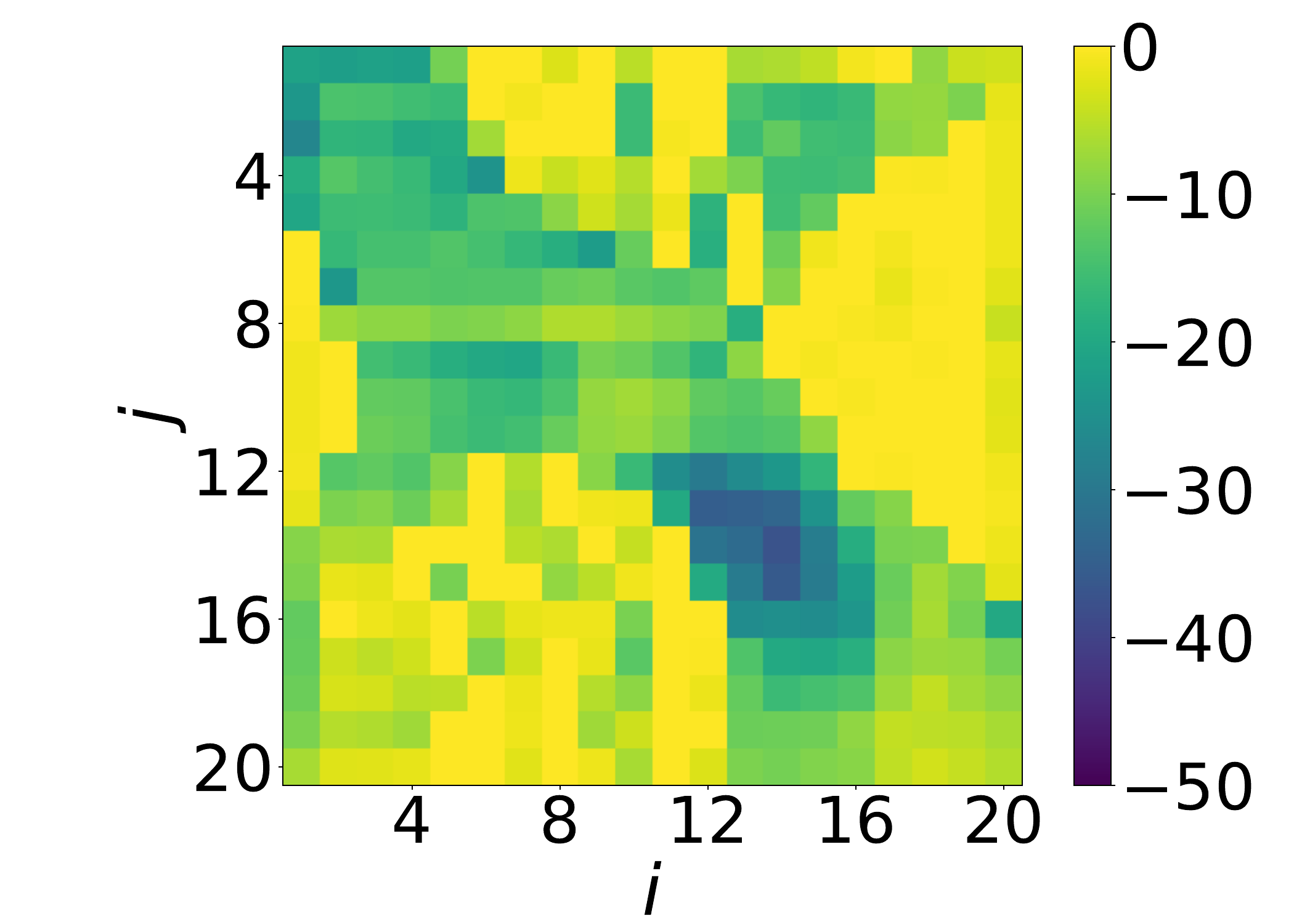}}\\
    \caption{The inverse solutions for $n=20$ neurons with noise added to the firing intervals and where the true connectivity function is symmetric.} 
    \label{fig:nx20_noisefire_symmetric}
\end{figure}

For $nl=5\%$ and $nl=10\%$ we get a reasonable value for $\kappa$ using Morozov's discrepancy principle, see Figure \ref{fig:nx100_noiseb_symmetric_noise10}.

When using Morozov's discrepancy principle, the stopping criterion is only dependent on the noise added to $\bb$, and the aforementioned discretization error occurring in $\AA$ is not taken into account. Thus, we suspect that for $nl=1\%$, the error in $\AA$ dominates the noise added to $\bb$ such that the stopping criterion in \eqref{eq:Dsicrep_b}  is not adequate to determine a reasonable $\kappa$. By reducing the time step used in the Euler method from $1/500$ to $1/5000$, the relative error between $\WW$ and $\WW^{\kappa}_{\text{inv}}$ measured in the Frobenius norm was reduced from $4.834$ to $0.658$ for $nl=5\%$.
This supports the validity of our suspicion.

Next, we consider the case where noise is added to the firing intervals. In figures \ref{fig:nx20_noisefire_symmetric} and \ref{fig:nx100_noisefire_symmetric} we see the results when we consider networks with $n=20$ and $n=100$ neurons, respectively. As expected, increased noise will result in less accurate results.

\begin{figure}
    \centering
    \subfigure[Noise level: $1\%$. Regularization parameter: $\kappa=61$.]{\includegraphics[width=0.27\linewidth]{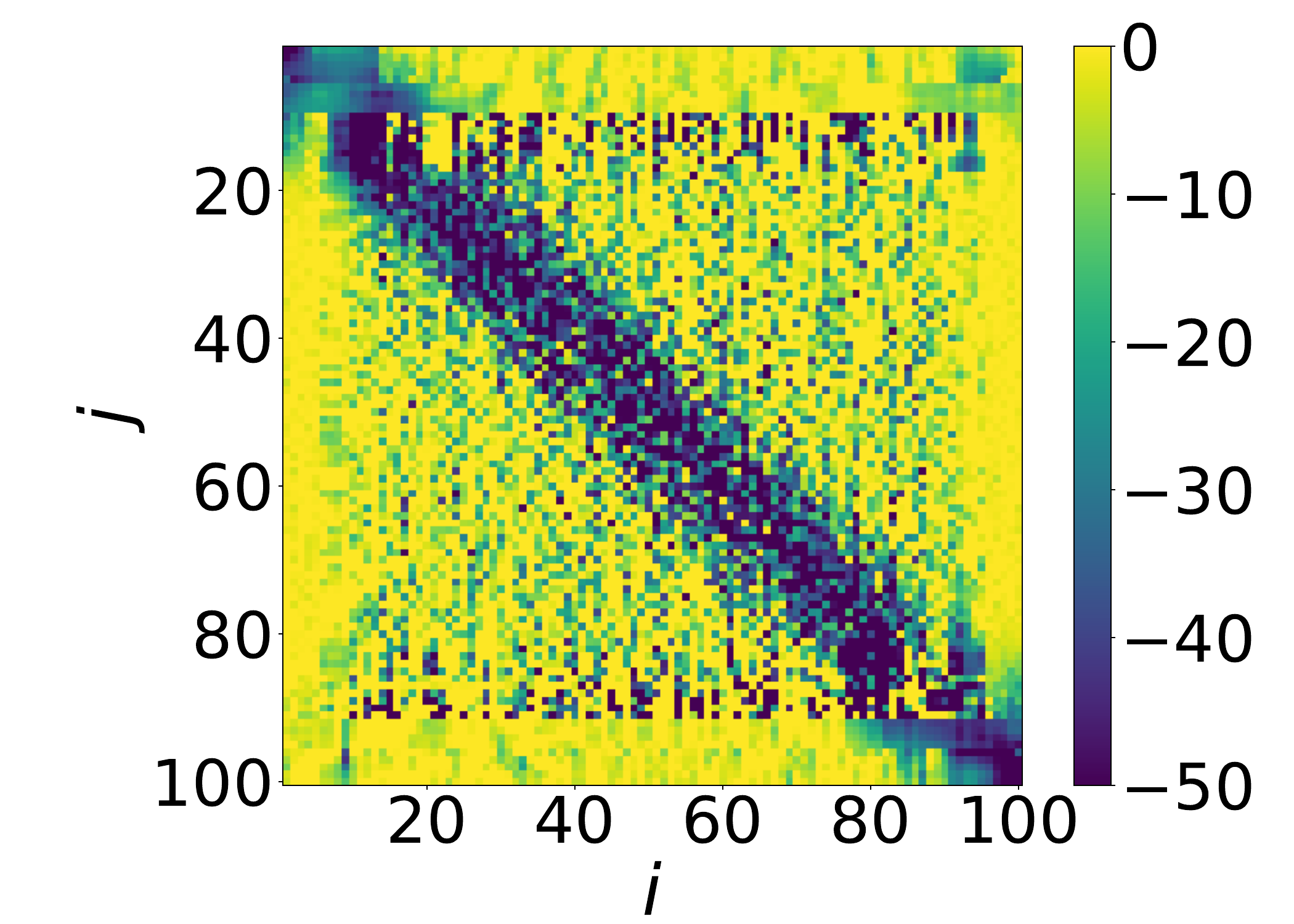}}\qquad
    \subfigure[Noise level: $5\%$. Regularization parameter: $\kappa=38$.]{\includegraphics[width=0.27\linewidth]{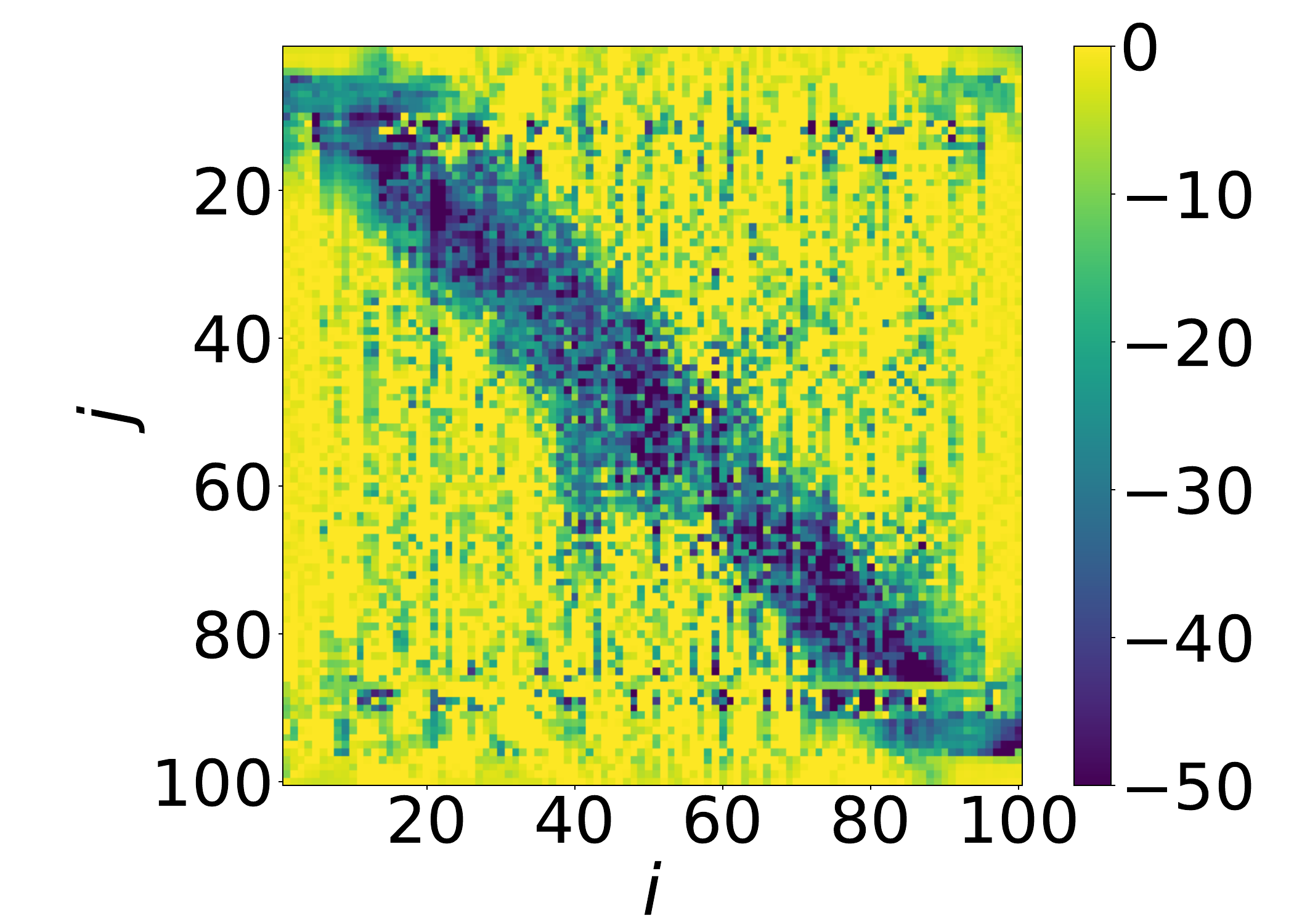}}\qquad
    \subfigure[Noise level: $10\%$. Regularization parameter: $\kappa=18$.]{\includegraphics[width=0.27\linewidth]{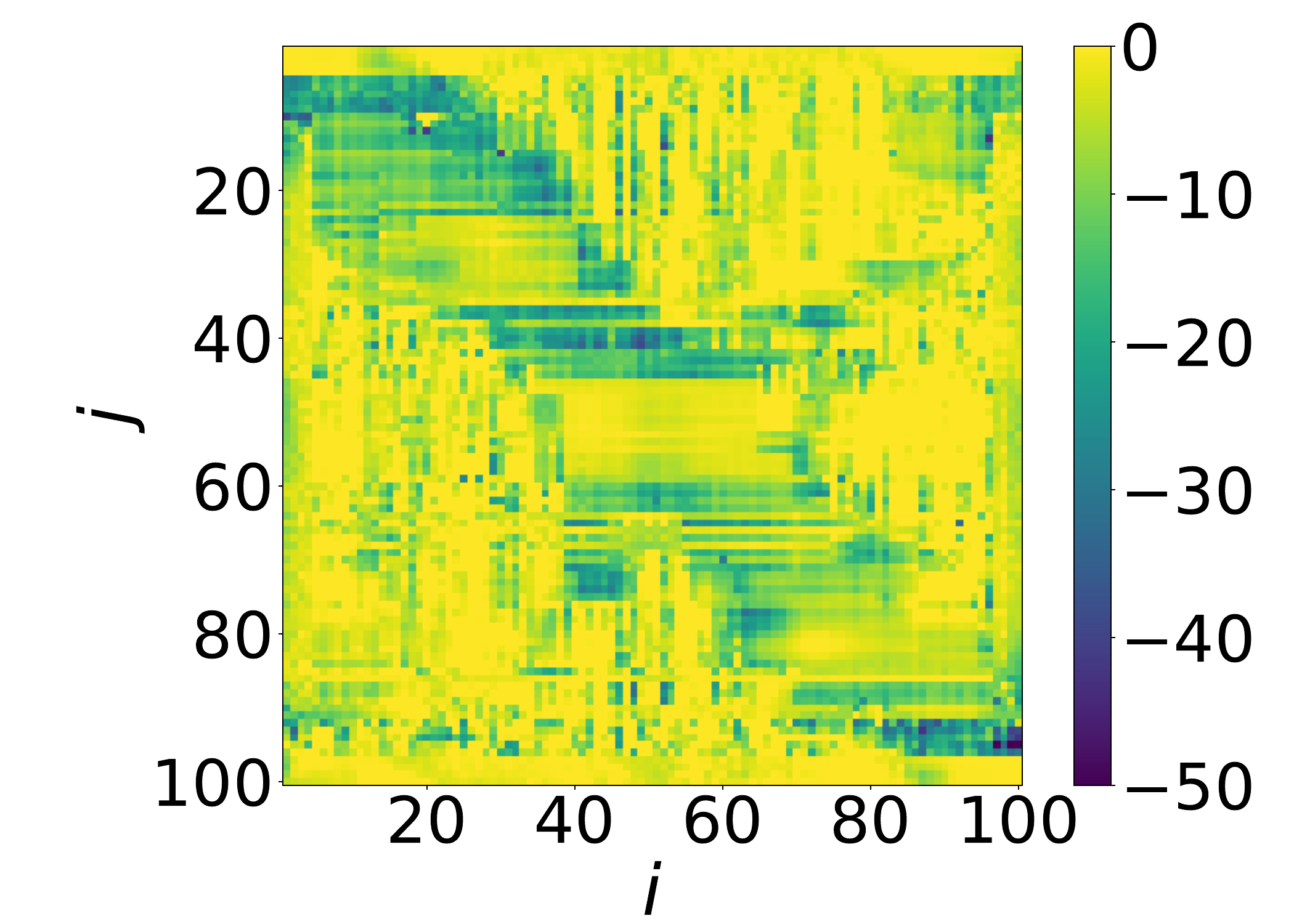}}\\
    \caption{The inverse solutions for $n=100$ neurons with noise added to the firing intervals and where the true connectivity function is symmetric.}
    \label{fig:nx100_noisefire_symmetric}
\end{figure}

In Table \ref{tab:sym} we present the relative error measured in the Frobenius norm for the reconstruction of the symmetric connectivity. For $n=20$ we observe that the relative error increases as the noise level increases for both the case where the noise is added to $\bb$ and the case where noise is added to the firing intervals. Note that for $n=100$ neurons and the $1\%$ noise level the regularization parameter is obtained by minimizing the Frobenius norm. Thus, this relative error is unrealistically small, and we cannot compare it to the other cases.

\begin{table}
    \centering
    \begin{tabular}{|c|c|c|c|c|c|c|}
    \hline
       & \multicolumn{3}{c|}{Noise added to $\bb$}  &  \multicolumn{3}{c|}{Noise added to $\{I_i^k\}$}\\
      \hline
        Noise level & $1\%$ & $5\%$ & $10\%$ & $1\%$ & $5\%$ & $10\%$\\
      \hline
        $n=20$ & $0.195$ & $0.515$ & $0.632$ & $0.209$ & $0.522$ & $0.741$ \\
         \hline
        $n=100$ &$0.382$* & $4.834$ & $0.589$&  $1.276$ & $0.713$ & $0.869$\\
        \hline
    \end{tabular}
    \caption{Relative error in the Frobenius norm when the true connectivity is symmetric, see Figure \ref{fig:W_sym}. The relative error for $n=100$, noise added to $\bb$ and $nl=1\%$ (marked with *), is not comparable to the rest: in this case the regularization parameter is chosen by minimizing the Frobenius norm.}
    \label{tab:sym}
\end{table}

\subsection{Non-symmetric connectivities}\label{sec:nonsym}
Here we consider the non-symmetric connectivity function \eqref{eq:Wnonsym}, see Figure \ref{fig:W_nonsym}. Again, we are going to both add noise to $\bb$ and to $\{I_i^k\}$. 

In Figure \ref{fig:nx20_noiseb_nonsymmetric} we see the results for the system with $n=20$ neurons, where noise is added to $\bb$. As in the symmetric case, using the eyeball norm we see that an increased amount of noise yields less accurate results. The results for $n=100$ neurons are reasonable for all the chosen noise levels, see Figure \ref{fig:nx100_noiseb_nonsymmetric}. In contrast to the symmetric case, the discrepancy principle successfully obtains a reasonable value for $\kappa$ for all three noise levels.

Lastly, we consider the case where the noise is added to the firing intervals $\{I_i^k\}$ for the non-symmetric connectivity. From Figures \ref{fig:nx20_noisefire_nonsymmetric} and \ref{fig:nx100_noisefire_nonsymmetric} we see that we get decent results for all three noise levels with both $n=20$ and $n=100$ neurons. 

Table \ref{tab:non_sym} shows the relative errors measured in the Frobenius norm for the non-symmetric connectivity matrix. In all four cases the relative error increases when we increase the noise level. However, as opposed to the result for the symmetric connectivity matrix, the relative error is smaller for $n=100$ neurons than for $n=20$ neurons. We suspect that the reason for the error decreasing is that for $n=100$ neurons we run the simulation for an increased amount of time. Thus, obtaining more information to determine the connectivity function. However, this is not the case for the symmetric connectivity. This is likely due to the fact that the singular values of $\AA$ for the symmetric connectivity tend to zero much faster than for the non-symmetric connectivity, which can be seen in Figure \ref{fig:m_vs_s}. Thus, a larger error amplification is expected in the symmetric case.
\begin{figure}
    \centering
    \subfigure[Noise level: $1\%$. Regularization parameter: $\kappa=11$.]{\includegraphics[width=0.27\linewidth]{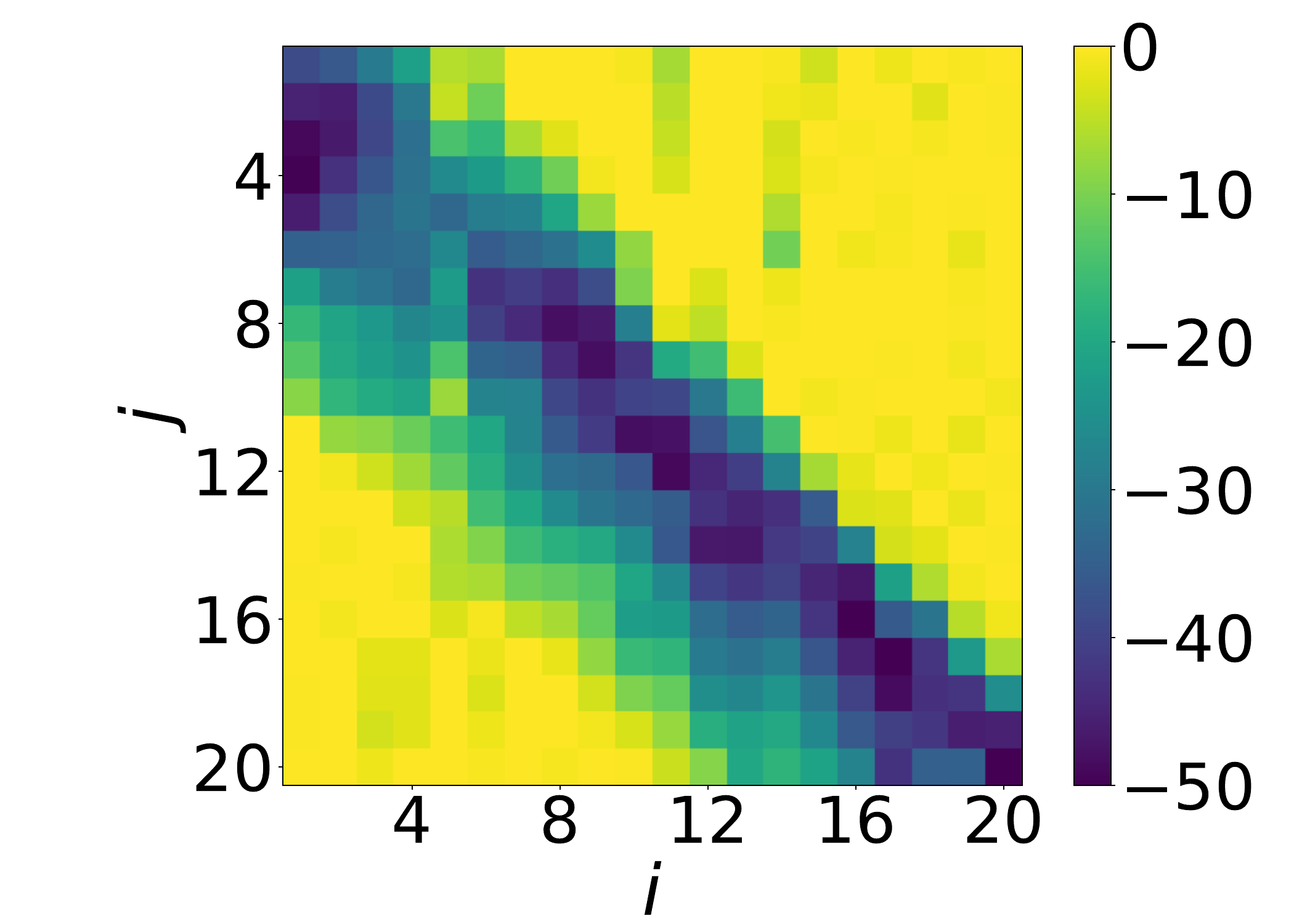}}\qquad
    \subfigure[Noise level $5\%$. Regularization parameter: $\kappa=9$.]{\includegraphics[width=0.27\linewidth]{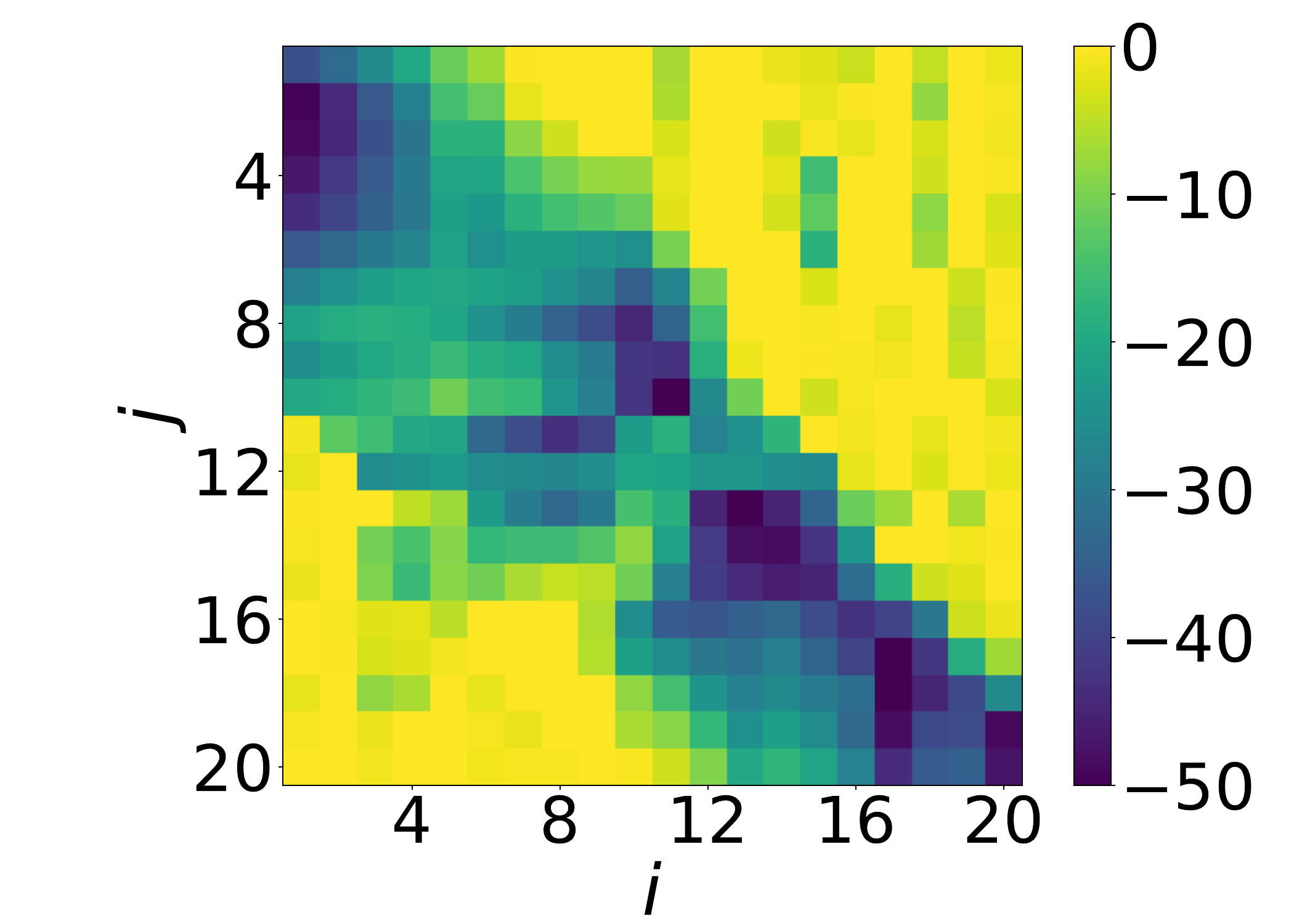}}\qquad
    \subfigure[Noise level: $10\%$. Regularization parameter: $\kappa=8$.]{\includegraphics[width=0.27\linewidth]{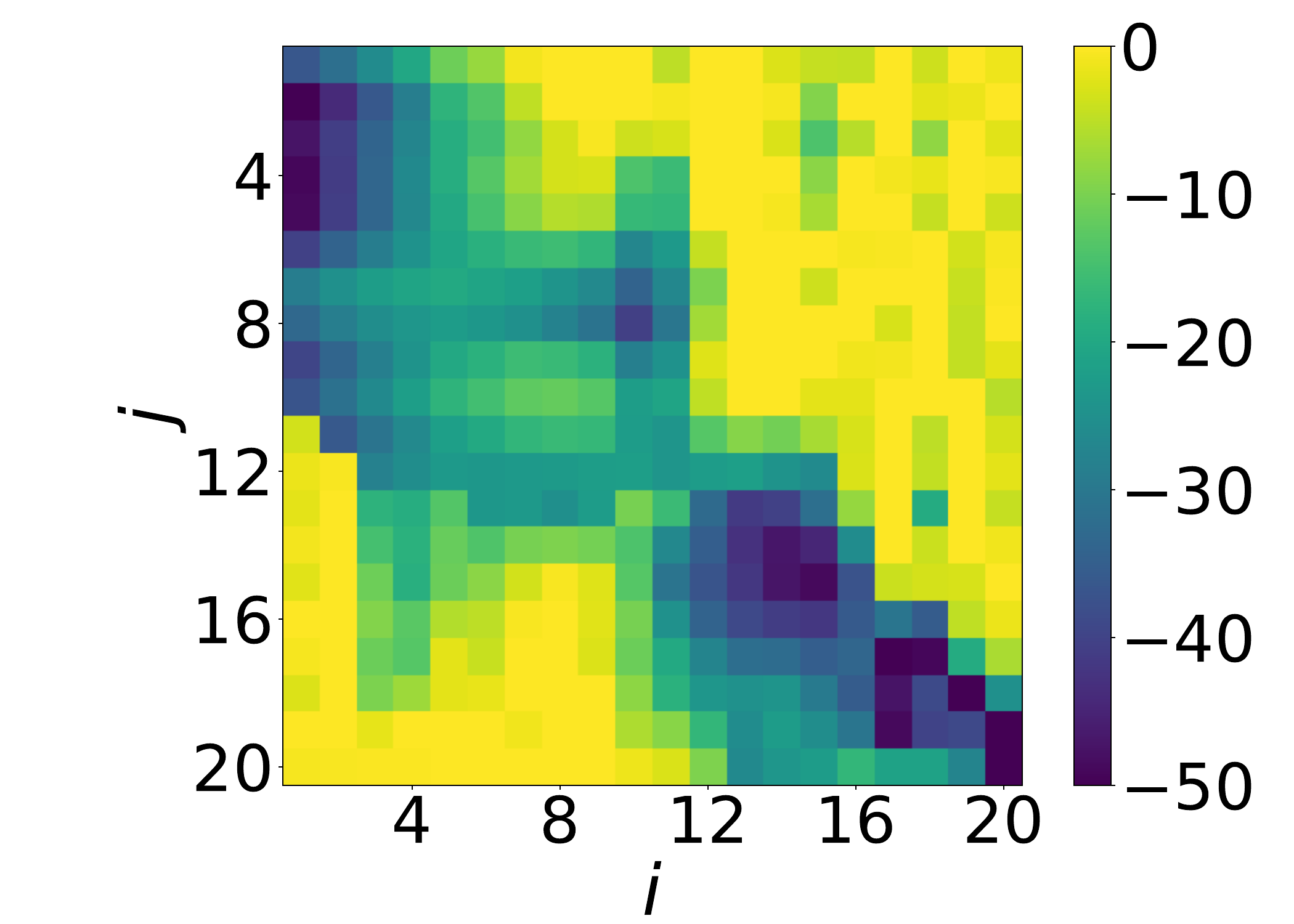}}\\
    \caption{The inverse solutions for $n=20$ neurons with noise added to $\bb$ and where the true connectivity function is non-symmetric.}
    \label{fig:nx20_noiseb_nonsymmetric}
\end{figure}

\begin{figure}
    \centering
    \subfigure[Noise level: $1\%$. Regularization parameter: $\kappa=27$.]{\includegraphics[width=0.27\linewidth]{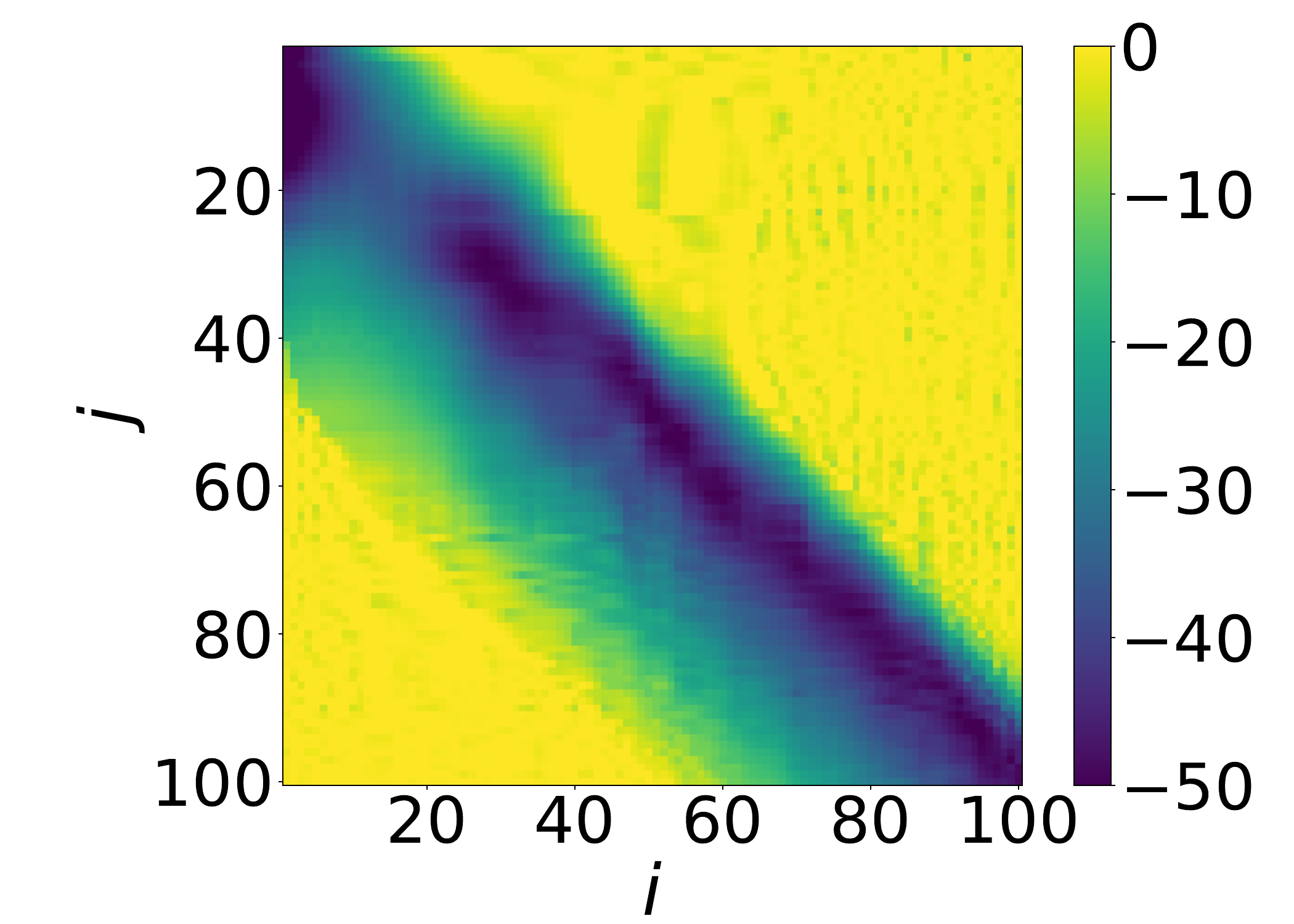}}\qquad
    \subfigure[Noise level: $5\%$. Regularization parameter: $\kappa=19$.]{\includegraphics[width=0.27\linewidth]{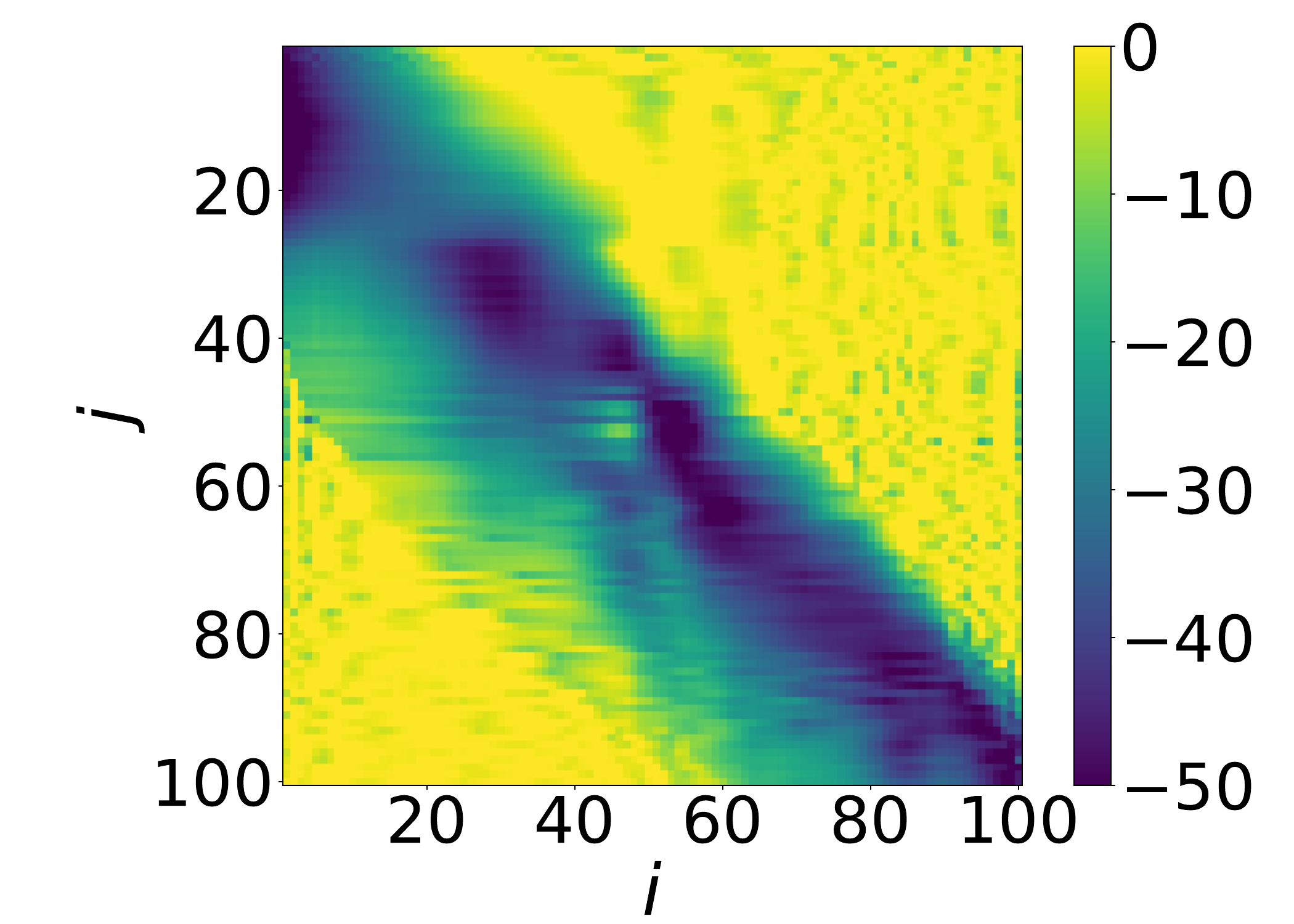}}\qquad
    \subfigure[Noise level: $10\%$. Regularization parameter: $\kappa=17$.]{\includegraphics[width=0.27\linewidth]{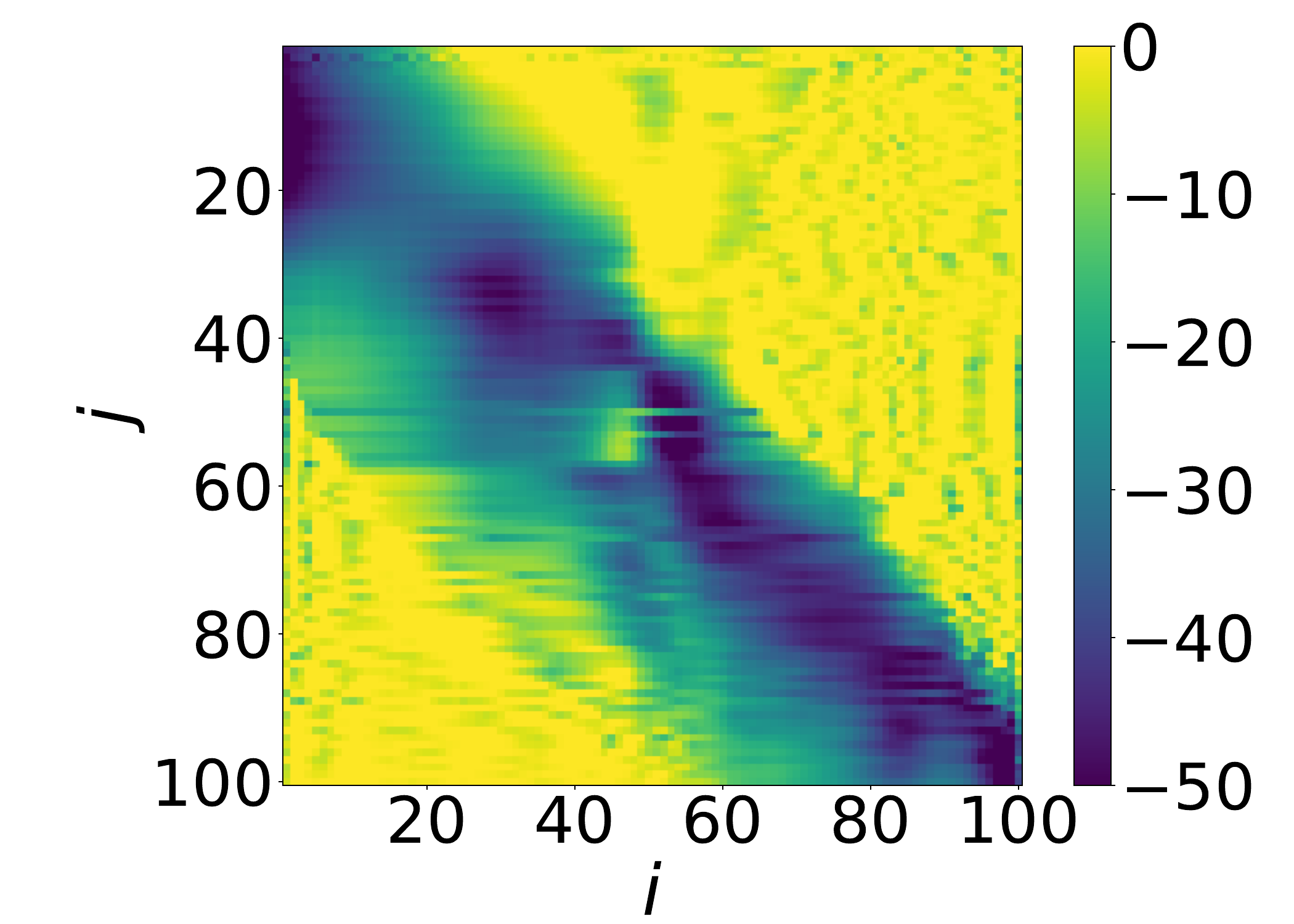}}\\
    \caption{The inverse solutions for $n=100$ neurons with the noise added to $\bb$ and where the true connectivity function is non-symmetric.}
    \label{fig:nx100_noiseb_nonsymmetric}
\end{figure}

\begin{figure}
    \centering
    \subfigure[Noise level: $1\%$. Regularization parameter: $\kappa=11$.]{\includegraphics[width=0.27\linewidth]{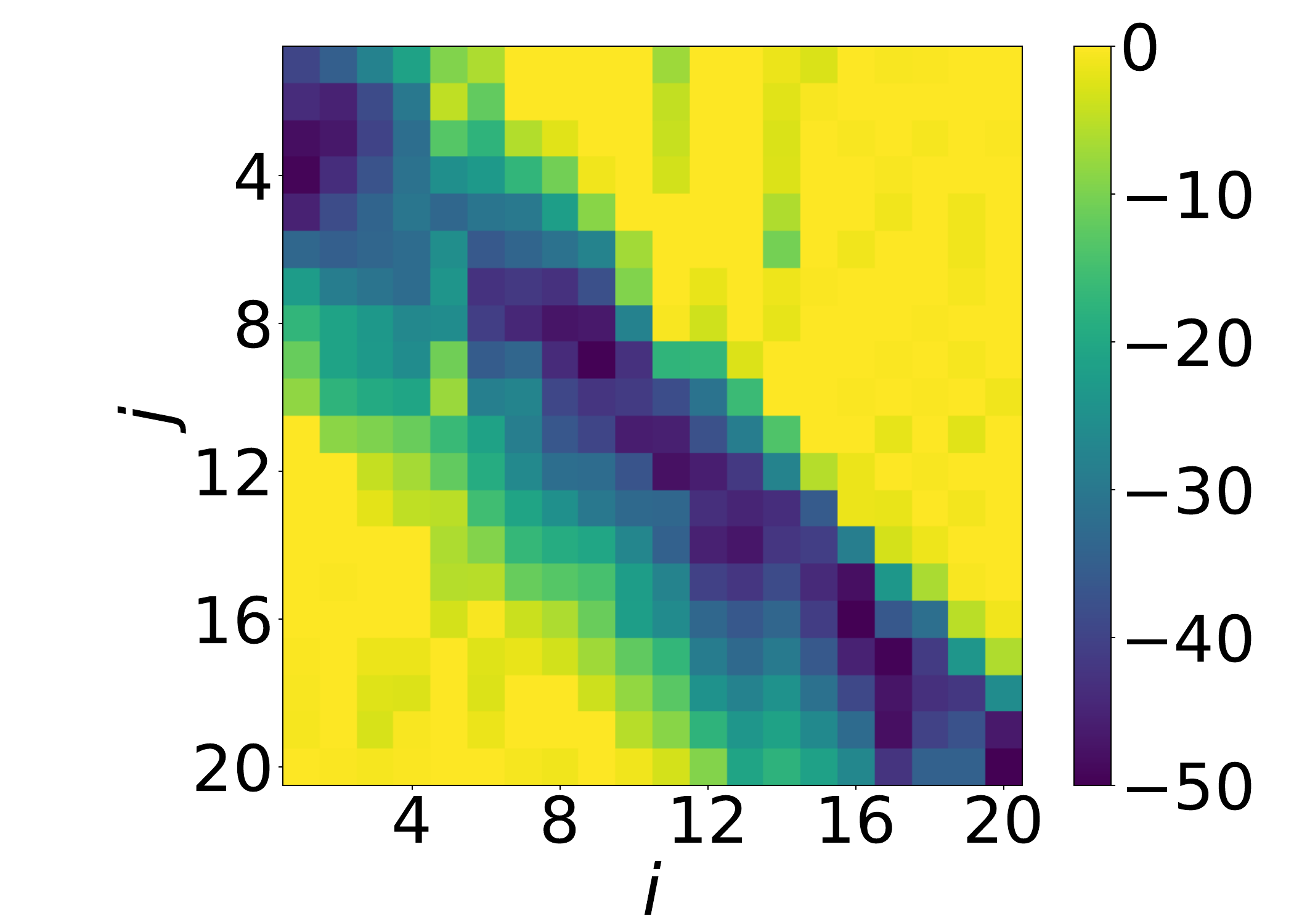}}\qquad
    \subfigure[Noise level: $5\%$. Regularization parameter: $\kappa=13$.]{\includegraphics[width=0.27\linewidth]{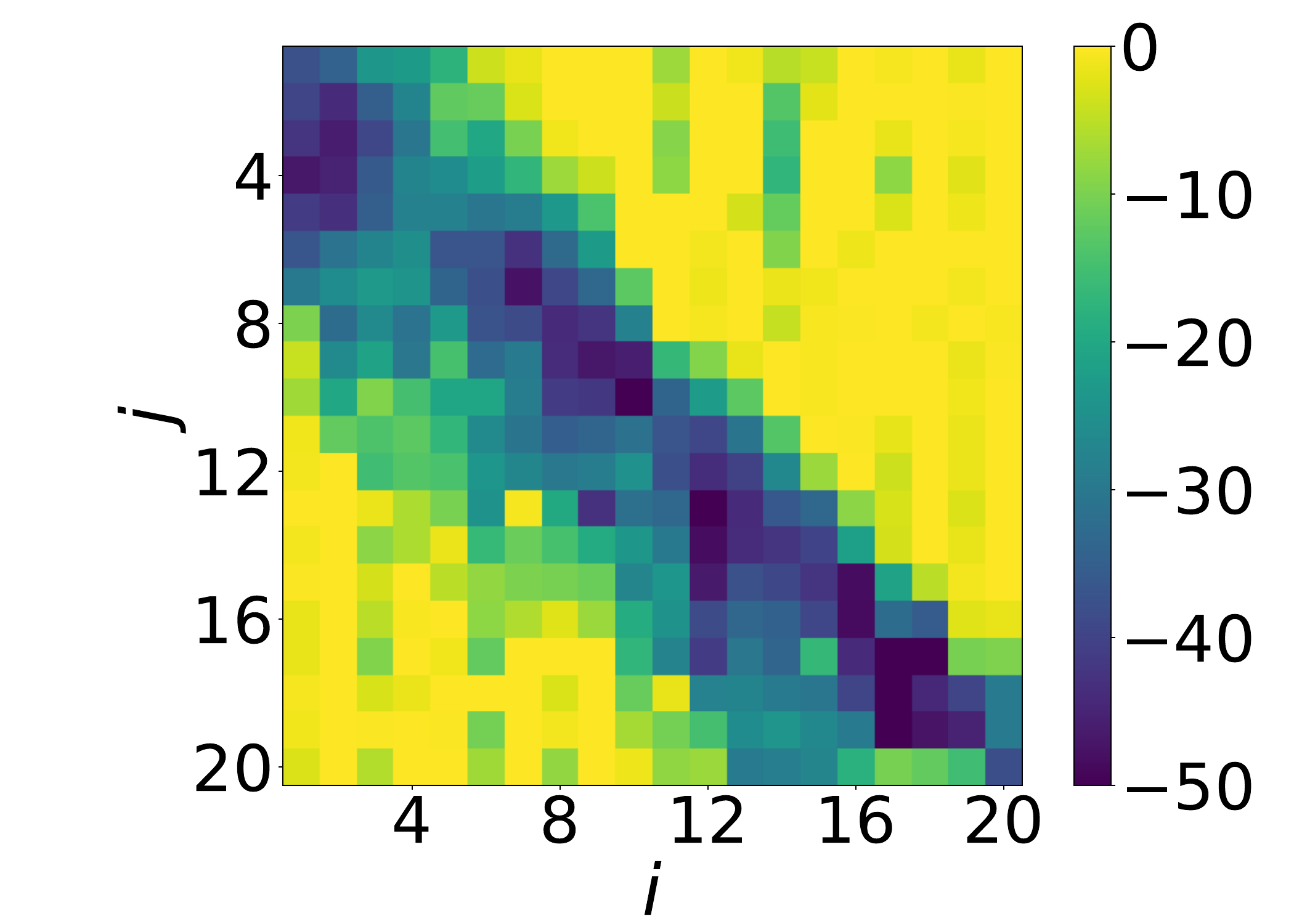}}\qquad
    \subfigure[Noise level: $10\%$. Regularization parameter: $\kappa=7$.]{\includegraphics[width=0.27\linewidth]{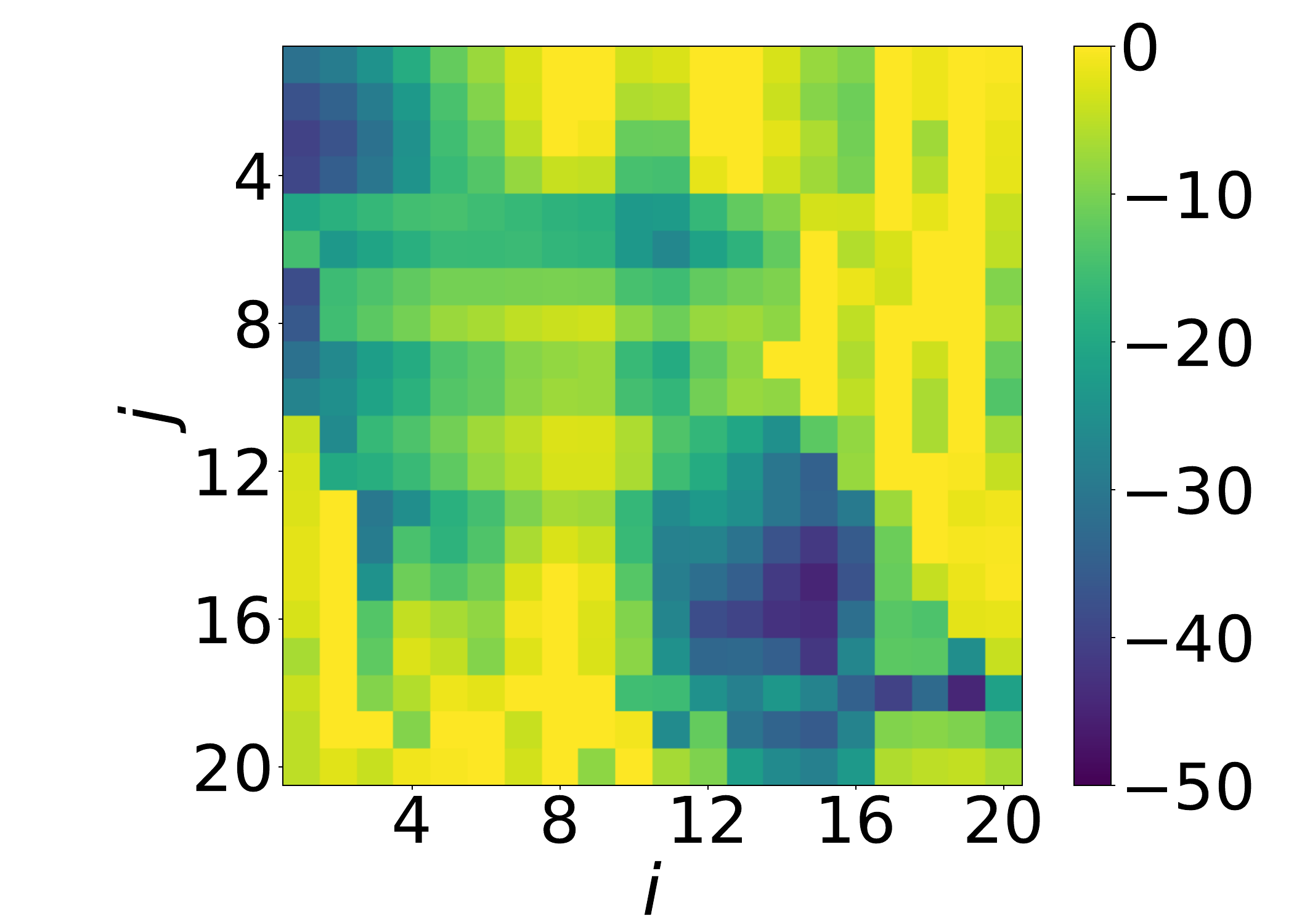}}\\
    \caption{The inverse solutions for $n=20$ neurons with the noise added to the firing intervals and where the true connectivity function is non-symmetric.}
    \label{fig:nx20_noisefire_nonsymmetric}
\end{figure}

\begin{figure}
    \centering
    \subfigure[Noise level: $1\%$. Regularization parameter: $\kappa=35$.]{\includegraphics[width=0.27\linewidth]{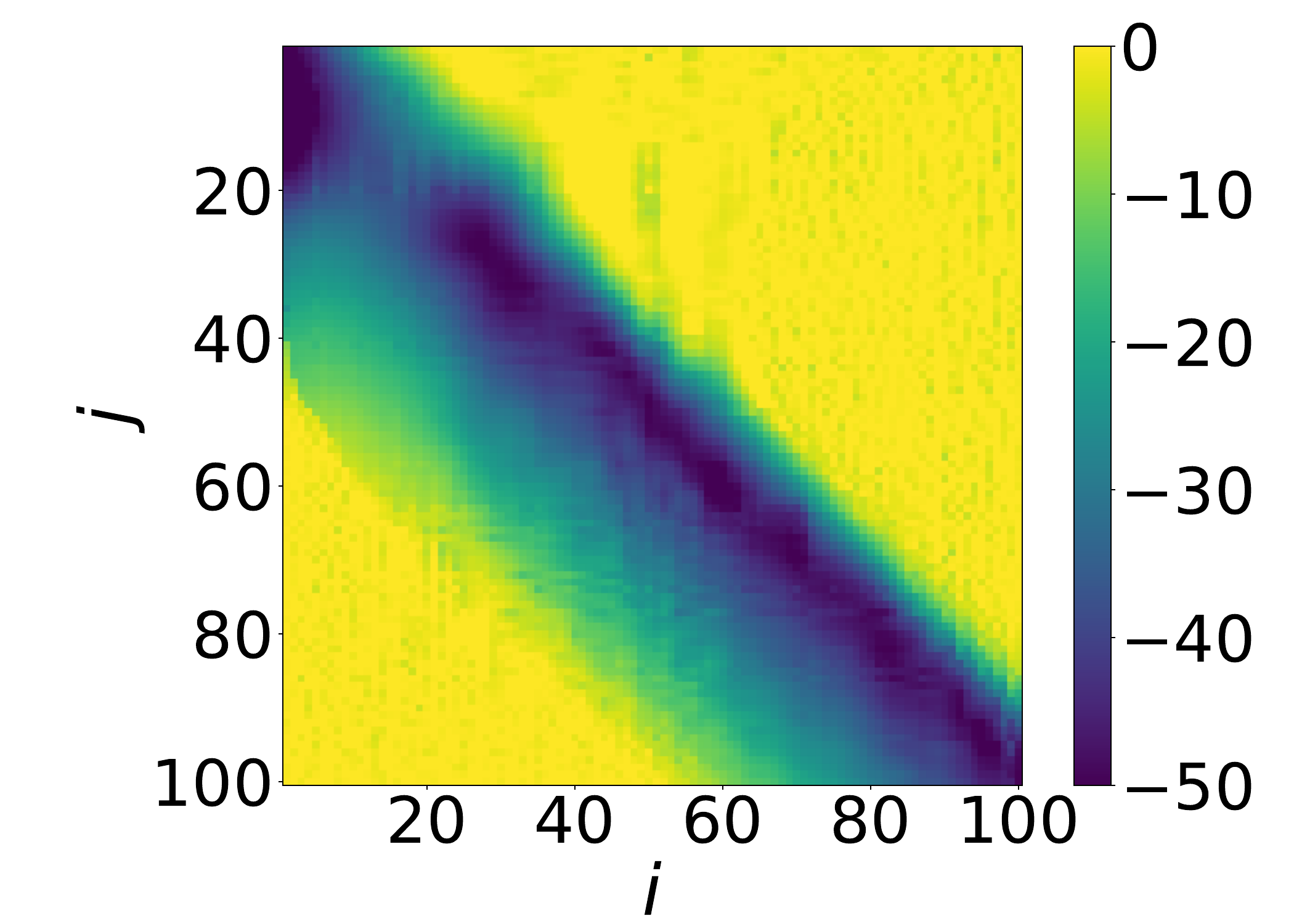}}\qquad
    \subfigure[Noise level: $5\%$. Regularization parameter: $\kappa=31$.]{\includegraphics[width=0.27\linewidth]{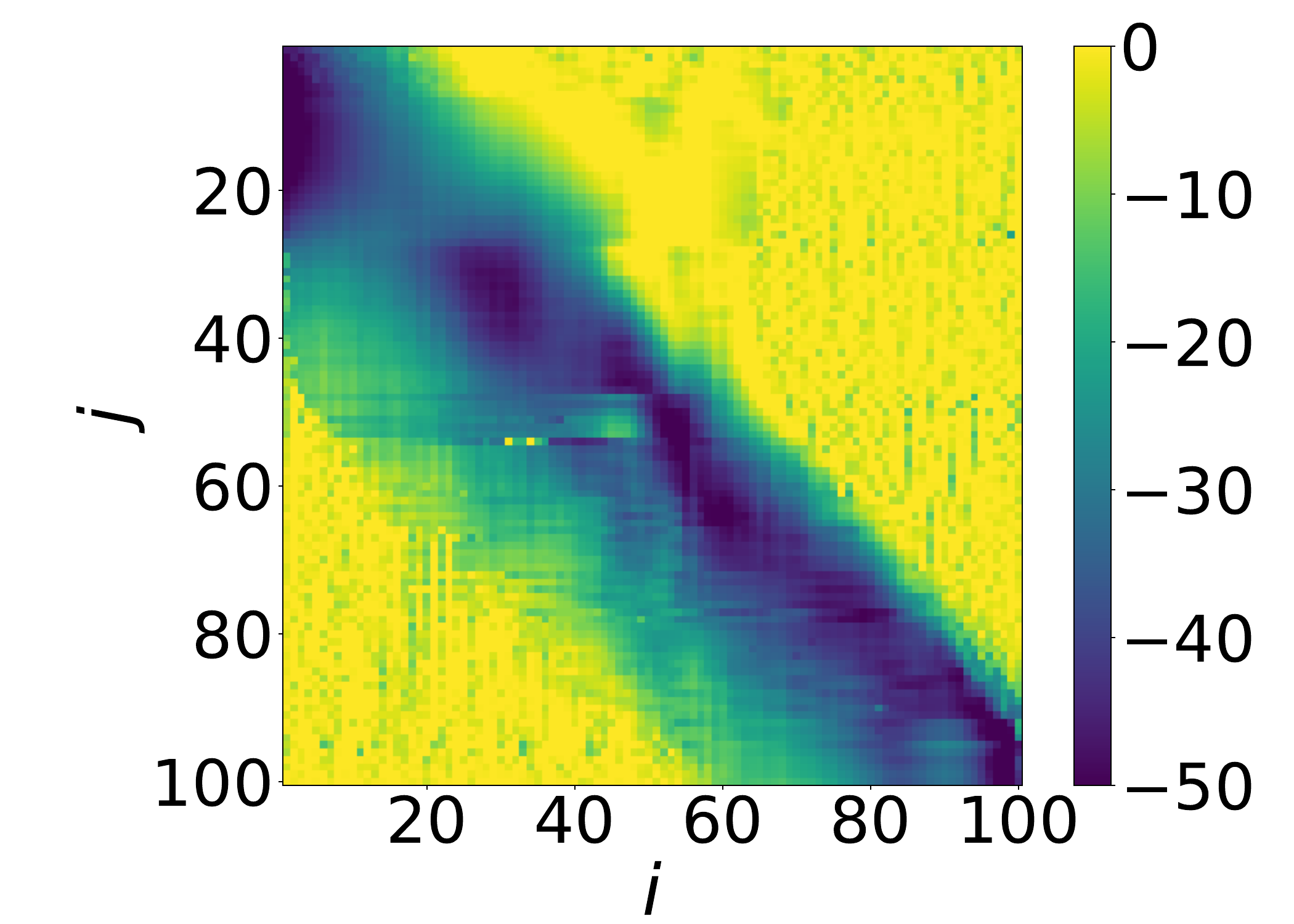}}\qquad
    \subfigure[Noise level: $10\%$. Regularization parameter: $\kappa=40$.]{\includegraphics[width=0.27\linewidth]{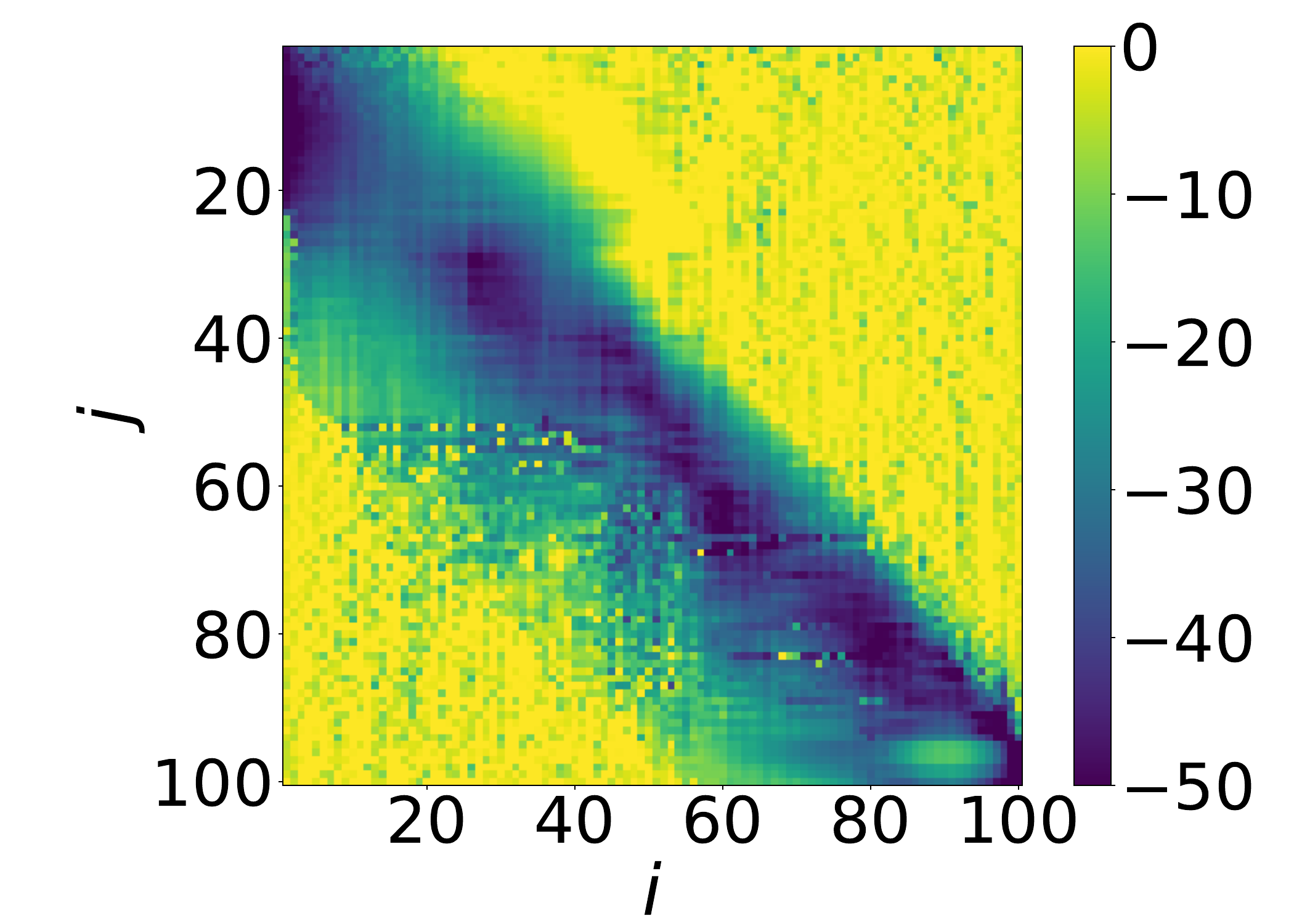}}\\
    \caption{The inverse solutions for $n=100$ neurons with the noise added to the firing intervals and where the true connectivity function is non-symmetric.} 
    \label{fig:nx100_noisefire_nonsymmetric}
\end{figure}

\begin{table}
    \centering
    \begin{tabular}{|c|c|c|c|c|c|c|}
    \hline
       & \multicolumn{3}{c|}{Noise added to $\bb$}  &  \multicolumn{3}{c|}{Noise added to $\{I_i^k\}$}\\
      \hline
        Noise level & $1\%$ & $5\%$ & $10\%$ & $1\%$ & $5\%$ & $10\%$\\
      \hline
        $n=20$ & $0.213$ & $0.393$ & $0.484$ &  $0.218$ & $0.307$ & $0.651$\\
         \hline
        $n=100$ &$0.129$ & $0.211$ & $0.259$&  $0.119$ & $0.211$ & $0.274$\\
        \hline
    \end{tabular}
    \caption{The relative error measured in the Frobenius norm when the true connectivity is non-symmetric, see Figure \ref{fig:W_nonsym}.}
    \label{tab:non_sym}
\end{table}

\section{Concluding remarks}\label{sec:concluding}
The purpose of this work was to test the waters to see if it is possible to predict the effective connection strengths in a network of neurons using a simple neural field model. A next step would be to loosen the somewhat restrictive assumptions that all other parameters in the problem are given. For example, when considering real data, we may not know the initial data, $\{ s_i^0 \}$. If that is the case, the estimation of $\{ s_i^0 \}$ has to be included in the inverse problem. However, the resulting system of equations will then no longer be linear, which requires its own investigation.

The value of the time delay $\tau_d$ is of particular importance, as it significantly influences the network dynamics \cite{roxin2005role}. Furthermore, from \eqref{eq:baby_linear_system}, we observe that $\tau_d$ plays a crucial role in the inverse problem. Therefore, a thorough investigation of the impact of the time delay on the system’s behaviour is warranted. However, this comprehensive analysis is beyond the scope of this article.

Another challenge that arises when considering real data instead of synthetic data, is that there will typically be more neurons in the network than what we have considered. This will lead to an increased run time, and more firing data will be needed to prevent having an under-determined system. On the other hand, if the collection of firing data includes more firing events than the number of neurons, we have an over-determined system. In this case, it can be beneficial to not use all the firing events to construct the inverse problem, as close firing events can cause the stability of the parameter estimation problem to deteriorate. Thus, an interesting question is which firing events we should include in the linear system to obtain a problem which is as stable as possible. This is a challenging optimization problem. 

Finally, for the approach described here to bear more merit, an in-depth analysis of the error propagation in the reconstruction process, based on the considerations in Section \ref{subsec:error}, needs to be performed. As this requires significant effort, such an investigation is left for a future work.
\\

\noindent {\small {\bf Data availability}. Data sharing not applicable to this article.} 

\noindent {\small {\bf Conflict of interest}. The authors declare no conflict of interest.}

\bibliographystyle{abbrv}
\bibliography{biblio.bib}

\end{document}